\documentclass[letterpaper, 11pt,  reqno]{amsart}
\usepackage[margin=1.2in, marginparwidth=1.5cm, marginparsep=0.5cm]{geometry}

\usepackage{amsmath,amssymb,amscd,amsthm,amsxtra, esint}

\usepackage[implicit=true]{hyperref}

\usepackage{color}

\allowdisplaybreaks[2]

\sloppy

\hfuzz  = 1cm 


\setlength{\pdfpagewidth}{8.50in}
\setlength{\pdfpageheight}{11.00in}

\definecolor{gr}{rgb}   {0.,   0.69,   0.23 }
\definecolor{bl}{rgb}   {0.,   0.5,   1. }
\definecolor{mg}{rgb}   {0.85,  0.,    0.85}
\definecolor{yl}{rgb}   {0.8,  0.7,   0.}
\definecolor{or}{rgb}  {0.7,0.2,0.2}

\newtheorem{theorem}{Theorem} [section]
\newtheorem{maintheorem}{Theorem}
\newtheorem{lemma}[theorem]{Lemma}
\newtheorem{proposition}[theorem]{Proposition}
\newtheorem{remark}[theorem]{Remark}


\DeclareMathOperator*{\supp}{supp}

\newcommand{\I}{\hspace{0.5mm}\text{I}\hspace{0.5mm}}
\newcommand{\II}{\text{I \hspace{-2.8mm} I} }
\newcommand{\III}{\text{I \hspace{-2.9mm} I \hspace{-2.9mm} I}}

\newcommand{\IV}{\text{I \hspace{-2.9mm} V}}

\newcommand{\noi}{\noindent}
\newcommand{\Z}{\mathbb{Z}}
\newcommand{\R}{\mathbb{R}}

\newcommand{\T}{\mathbb{T}}

\let\Re=\undefined\DeclareMathOperator*{\Re}{Re}

\let\P= \undefined
\newcommand{\P}{\mathbf{P}}

\newcommand{\E}{\mathbb{E}}

\renewcommand{\L}{\mathcal{L}}

\newcommand{\F}{\mathcal{F}}

\newcommand{\al}{\alpha}
\newcommand{\be}{\beta}
\newcommand{\dl}{\delta}

\newcommand{\nb}{\nabla}

\newcommand{\Dl}{\Delta}
\newcommand{\eps}{\varepsilon}
\newcommand{\kk}{\kappa}

\newcommand{\ld}{\lambda}
\newcommand{\Ld}{\Lambda}
\newcommand{\s}{\sigma}
\newcommand{\Si}{\Sigma}
\newcommand{\ft}{\widehat}

\newcommand{\wt}{\widetilde}
\newcommand{\cj}{\overline}

\newcommand{\dt}{\partial_t}

\newcommand{\LRA}{\Longrightarrow}

\newcommand{\ta}{\theta}

\renewcommand{\l}{\ell}
\renewcommand{\o}{\omega}
\renewcommand{\O}{\Omega}

\newcommand{\les}{\lesssim}
\newcommand{\ges}{\gtrsim}

\newcommand{\jb}[1]
{\langle #1 \rangle}

\newcommand{\N}{\mathbb{N}}

\renewcommand{\H}{\mathcal{H}}

\newtheorem*{ackno}{Acknowledgements}

\numberwithin{equation}{section}
\numberwithin{theorem}{section}

\newcommand{\nbn}{\jb{n}_{\hspace{-0.5mm}_N}}

\newcommand{\wz}{{\wt z}}
\newcommand{\wZ}{{\wt Z}}
\newcommand{\ws}{{\wt \s}}

\usepackage{tikz} 
%
%
%
%
%

\usepackage{marginnote}
\usepackage{scalerel} 

\usetikzlibrary{shapes.misc}
\usetikzlibrary{shapes.symbols}
\usetikzlibrary{decorations}
\usetikzlibrary{decorations.markings}

\tikzset{
	dot/.style={circle,fill=black,draw=black,inner sep=0pt,minimum size=0.5mm},
	>=stealth,
	}

\tikzset{
	dot2/.style={circle,fill=black,draw=black,inner sep=0pt,minimum size=0.2mm},
	>=stealth,
	}

\tikzset{
	ddot/.style={circle,fill=white,draw=black,inner sep=0pt,minimum size=0.8mm},
	>=stealth,
	}

\tikzset{decision/.style={ 
        draw,
        diamond,
        aspect=1.5
    }}

\tikzset{dia2/.style
={diamond,fill=white,draw=black,inner sep=0pt,minimum size=1mm},
	>=stealth,
	}

\tikzset{dia/.style
={star,fill=black,draw=black,inner sep=0pt,minimum size=1mm},
	>=stealth,
	}

\tikzset{dia/.style
={diamond,fill=black,draw=black,inner sep=0pt,minimum size=1.3mm},
	>=stealth,
	}

\makeatletter

\def\<#1>{\xusebox{#1}}
\makeatother

\newcommand{\pe}{\mathbin{\scaleobj{0.7}{\tikz \draw (0,0) node[shape=circle,draw,inner sep=0pt,minimum size=8.5pt] {\scriptsize  $=$};}}}

\newcommand{\pl}{\mathbin{\scaleobj{0.7}{\tikz \draw (0,0) node[shape=circle,draw,inner sep=0pt,minimum size=8.5pt] {\scriptsize $<$};}}}
\newcommand{\pg}{\mathbin{\scaleobj{0.7}{\tikz \draw (0,0) node[shape=circle,draw,inner sep=0pt,minimum size=8.5pt] {\scriptsize $>$};}}}

\newcommand{\too}{\longrightarrow}

\begin{document}
\baselineskip = 14pt

\title[The cubic NLW in negative Sobolev spaces]
{
Probabilistic local well-posedness of the cubic nonlinear wave equation in negative Sobolev spaces
} 
\author[T.~Oh , O.~Pocovnicu, and   N.~Tzvetkov]
{Tadahiro Oh, Oana Pocovnicu,  and Nikolay Tzvetkov}

\address{
Tadahiro Oh, School of Mathematics\\
The University of Edinburgh\\
and The Maxwell Institute for the Mathematical Sciences\\
James Clerk Maxwell Building\\
The King's Buildings\\
Peter Guthrie Tait Road\\
Edinburgh\\ 
EH9 3FD\\
 United Kingdom}

\email{hiro.oh@ed.ac.uk}

\address{
Oana Pocovnicu\\
Department of Mathematics, Heriot-Watt University and The Maxwell Institute for the Mathematical Sciences, Edinburgh, EH14 4AS, United Kingdom}
\email{o.pocovnicu@hw.ac.uk}

\address{
Nikolay Tzvetkov\\
CY Cergy Paris University\\
Cergy-Pontoise\\ 
F-95000\\
UMR 8088 du CNRS\\ France}

\email{nikolay.tzvetkov@cyu.fr}


%
\subjclass[2010]{35L71}
\keywords{nonlinear wave equation;  Gaussian measure;  local well-posedness; renormalization; triviality}

\dedicatory{Dedicated to the memory of Professor Ioan I.\,Vrabie (1951--2017)}

\begin{abstract}
We study  the three-dimensional 
 cubic nonlinear wave equation (NLW) with random initial data below $L^2(\T^3)$.
By considering the second order expansion
in terms of the random linear solution, 
we prove almost sure local well-posedness of the renormalized NLW
in negative Sobolev spaces.
We also prove
a  new instability  result  for the defocusing cubic NLW without renormalization 
in negative Sobolev spaces, 
which is in the spirit of the so-called triviality in the study of stochastic partial differential equations.
More precisely, by studying (un-renormalized) NLW with 
given  smooth deterministic initial data plus a certain truncated random initial data, 
we show that, 
as the truncation is removed, 
the solutions converge to $0$ in the distributional sense
for {\it any} deterministic initial data.

\end{abstract}

%
\maketitle

\tableofcontents

\section{Introduction}
\subsection{Main result}
We consider the Cauchy problem for the defocusing cubic nonlinear wave equation (NLW)
on the three-dimensional torus $\T^3 = (\R/2\pi \Z)^3$:
\begin{equation}\label{KG}
\begin{cases}
\partial_t^2 u-\Delta u+u^3=0\\
(u, \dt u) |_{t = 0}  = (u_0,u_1)\in \H^s(\T^3), 
\end{cases}
\end{equation}

\noi
where $u :\R\times \T^3\to  \R$
and 
$\H^s(\T^3) = H^s(\T^3)\times H^{s-1}(\T^3)$.
Here, 
$H^s(\T^3)$ denotes the standard Sobolev space on $\T^3$
 endowed with the norm:
\[
\|f\|_{H^s(\T^3)}=\|\jb{n}^s\ft{f}(n)\|_{\l^2(\Z^3)},
\]

\noi
where $\ft{u}(n)$ is the Fourier coefficient of $u$ and  $\jb{\,\cdot\,} = (1+|\,\cdot\,|^2)^\frac{1}{2}$ .
The classical well-posedness result (see for example \cite {tz-cime})  for \eqref{KG} reads as follows.

\begin{maintheorem}\label{THM:0}
Let $s\geq 1$. Then, for every $(u_0,u_1)\in \H^s(\T^3)$,
 there exists a unique global-in-time solution $u$ to~\eqref{KG} in $C(\R;H^s(\T^3))$. Moreover, the dependence 
of the solution map$: (u_0, u_1) \mapsto u(t)$ on initial data and  time $t \in \R$ is continuous. 
\end{maintheorem}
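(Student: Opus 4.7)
The plan is to prove local well-posedness in $\H^s(\T^3)$ by a contraction mapping argument based on Strichartz estimates, and then to globalize using the conservation of the defocusing energy.

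First, I would recast \eqref{KG} in Duhamel form as
\[
u(t) = \cos(t|\nabla|)\, u_0 + \frac{\sin(t|\nabla|)}{|\nabla|}\, u_1 - \int_0^t \frac{\sin((t-t')|\nabla|)}{|\nabla|}\, u^3(t')\, dt',
\]
and look for a fixed point of the right-hand side in a resolution space of the form $X_T = C([-T,T]; H^s(\T^3)) \cap L^q([-T,T]; L^r(\T^3))$, where $(q,r)$ is a wave-admissible pair chosen so that H\"older's inequality together with Sobolev embedding yields $\|u^3\|_{L^{\tilde q'}_t L^{\tilde r'}_x} \les \|u\|_{X_T}^3$. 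The cubic NLW in three dimensions has scaling critical regularity $s_c = \tfrac{1}{2}$, so the assumption $s \geq 1$ is (scaling) subcritical and leaves ample room. Combining Strichartz estimates for the wave equation on $\T^3$ (available with at most an $\eps$-derivative loss on the compact manifold, cf.\ Kapitanski and Burq--G\'erard--Tzvetkov) with the embedding $H^1(\T^3) \embeds L^6(\T^3)$ then suffices to close a contraction on a time interval $T = T(\|(u_0, u_1)\|_{\H^s}) > 0$, delivering a unique local solution together with continuous dependence on the data.

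Second, to upgrade the local solution in $\H^1$ to a global one, I would invoke the conservation of the defocusing energy
\[
E[u,\dt u] = \tfrac{1}{2}\|\dt u\|_{L^2}^2 + \tfrac{1}{2}\|\nabla u\|_{L^2}^2 + \tfrac{1}{4}\|u\|_{L^4(\T^3)}^4,
\]
which, together with conservation of the spatial means of $u$ and $\dt u$, gives a uniform-in-time a priori bound on $\|(u, \dt u)\|_{\H^1}$. Since the local existence time produced by the contraction depends only on the $\H^1$ norm of the data, this bound permits iteration of the local theory and yields a solution in $C(\R; H^1)$. For $s > 1$, persistence of regularity follows from a Gr\"onwall-type estimate on the growth of $\|(u, \dt u)\|_{\H^s}$, using the $\H^1$ bound to control the cubic nonlinearity. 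Continuous dependence on initial data and on $t \in \R$ is built into the contraction argument on $X_T$ and propagates to arbitrary time intervals via the uniform energy bound.

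The main technical point is the set-up of the Strichartz inequalities on the compact torus: unlike their Euclidean counterparts, these estimates necessarily incur an $\eps$-derivative loss, which forces a careful choice of the pair $(q,r)$. However, the subcriticality at $s = 1$ absorbs the loss, so no obstruction beyond what is already present in the classical Euclidean theory of Ginibre--Velo appears.
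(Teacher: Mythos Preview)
Your proposal is correct but more elaborate than necessary, and slightly different from the paper's route. The paper indicates that Theorem~\ref{THM:0} follows simply from the Sobolev embedding $H^1(\T^3)\subset L^6(\T^3)$ together with energy conservation. Concretely, at $s=1$ one can run the contraction directly in $C([-T,T];H^1)\cap C^1([-T,T];L^2)$ using only the energy estimate $\|\L^{-1}F\|_{L^\infty_T H^1}\lesssim \|F\|_{L^1_T L^2}$ and the bound $\|u^3\|_{L^2}\le \|u\|_{L^6}^3\lesssim \|u\|_{H^1}^3$; no Strichartz estimate is needed. Your Strichartz-based argument also works, but it is overkill in this subcritical regime.

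One minor correction: your remark that Strichartz estimates on $\T^3$ ``necessarily incur an $\eps$-derivative loss'' is inaccurate for the wave equation. Because of finite speed of propagation, the local-in-time Strichartz estimates on $\T^3$ transfer without loss from those on $\R^3$ (this is exactly how the paper obtains Lemma~\ref{LEM:Str}). The $\eps$-loss phenomenon you are thinking of is specific to Schr\"odinger-type equations on compact manifolds. This does not affect the validity of your argument, but it is worth getting straight.
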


The proof of Theorem \ref{THM:0} follows from Sobolev's inequality: $H^1(\T^3) \subset L^6(\T^3)$
and the conservation of the energy for \eqref{KG}.
Recall that the scaling symmetry: $u (t, x) \mapsto \ld u(\ld t, \ld x)$ for \eqref{KG}  posed on $\R^3$
induces the scaling-critical Sobolev regularity $s_\text{crit} =\frac 12$.
By using the Strichartz estimates (see Lemma \ref{LEM:Str} below), 
one may indeed show that  the Cauchy problem~\eqref{KG} remains locally well-posed 
in $\H^s(\T^3)$
for $s\geq \frac 12$ \cite{LS}.
On the other hand, it is known that  the Cauchy problem~\eqref{KG} is ill-posed
for  $s <\frac 12$  \cite{CCT, BT1, Xia, OOTz, FOk}.
 We refer to~\cite {tz-cime, OOTz, FOk}~for the proofs of these facts.

 One may then ask whether a sort of well-posedness of  \eqref{KG}  survives 
 below the scaling-critical regularity, i.e.~for $s<\frac 12$. 
 As it was shown in the work \cite{BT1, BT-JEMS}
 by Burq and the third author, 
  the answer to this question is positive 
 if one considers {\it random} initial data.
 In this paper, we will primarily consider the following random initial data: 
\begin{equation}\label{series}
u_0^\o = \sum_{n \in \Z^3} \frac{g_n(\o)}{\jb{n}^{\al}}e^{in\cdot x}
\qquad\text{and}\qquad 
u_1^\o = \sum_{n \in \Z^3} \frac{h_n(\o)}{\jb{n}^{\al-1}}e^{in\cdot x}, 
\end{equation}

\noi
where the series   $\{ g_n \}_{n \in \Z^3}$ and  $\{ h_n \}_{n \in \Z^3}$ are two families of 
independent standard   complex-valued  Gaussian random variables  
on a probability space $(\O, \F, P)$ conditioned that\footnote{In particular, 
$g_0$ and $h_0$ are real-valued.}  $g_n=\overline{g_{-n}}$,  $h_n=\overline{h_{-n}}$, 
$n \in \Z^3$. More precisely,  with the notation $\N=\{1,2,\,3, \cdots\}$, we first define the  index set $\Ld$ by 
\begin{align}
 \Ld = 
 (\Z^2\times \N) \cup
 (\Z\times \N\times\{0\}) \cup (\N\times \{(0,0)\}) \cup\{(0, 0,0)\}.
\label{index}
\end{align}
 
 \noi
We then define  $\{g_n, h_n\}_{n\in\Ld}$  to be a family   of independent standard   Gaussian random variables 
which are complex-valued for $n\neq 0$ and are real-valued for $n=0$.  
We finally set  $g_n=\overline{g_{-n}}$,  $h_n=\overline{h_{-n}}$ for $n \in \Z^3 \setminus  \Ld$.

The partial sums for the series $(u_0^\o, u_1^\o)$ in \eqref{series} 
form a  Cauchy sequence in $L^2(\O; \H^{s}(\T^3) )$ 
for every $s<\al-\frac{3}{2}$
and  therefore the random initial data 
$(u_0^\o, u_1^\o)$ in \eqref{series} 
 belongs almost surely to $\H^s(\T^3)$
 for the same range of $s$.
On the other hand, one may show that  the probability of the event 
$(u_0^\o,u_1^\o)\in \H^{\al-\frac{3}{2}}(\T^3)$ is zero.
See Lemma B.1 in \cite{BT1}.
As a result,  when $\al>\frac 52$, 
  one may apply the classical global well-posedness result in Theorem~\ref{THM:0} 
  for the random initial data $(u_0^\o,u_1^\o)$ given by \eqref{series}
  since $(u_0^\o, u_1^\o) \in \H^1(\T^3)$ almost surely.
For $\al > 2$, one may still apply the  more refined (deterministic) local well-posedness result in $\H^\frac{1}{2}(\T^3)$
mentioned above.
 For $\al \leq 2$, 
however,  the Cauchy problem 
\eqref{KG}  becomes ill-posed.
 Despite this ill-posedness result, the analysis in~\cite{BT-JEMS,tz-cime} implies the following statement.

 \begin{maintheorem}\label{THM:BT}
Let $\al>\frac 32$ and $s<\al- \frac 32$. 
Let $\{u_N\}_{N \in \N}$ 
be a sequence of the smooth global solutions\footnote{Theorem ~\ref{THM:0}
guarantees existence of smooth global solutions $\{u_N\}_{N\in \N}$  to \eqref{KG}.}
to~\eqref{KG} with 
the following random $C^\infty$-initial data: 
\begin{equation}\label{series_N}
u_{0, N}^\o(x) = \sum_{|n|\leq N } \frac{g_n(\o)}{\jb{n}^{\al}}e^{in\cdot x}
\qquad\text{and}\qquad 
u_{1, N}^\o(x) = \sum_{|n|\leq N} \frac{h_n(\o)}{\jb{n}^{\al-1}}e^{in\cdot x},
\end{equation}

\noi
where
$\{ g_n \}_{n \in \Z^3}$ and  $\{ h_n \}_{n \in \Z^3}$ are as in \eqref{series}.
Then, 
as $N \to \infty$,  
 $u_N$  converges  almost surely  to a \textup{(}unique\textup{)} limit $u$ 
  in $C(\R;H^s(\T^3))$, 
satisfying NLW \eqref{KG} in a distributional sense. 
\end{maintheorem}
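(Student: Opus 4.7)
The plan is to use the Bourgain-Burq-Tzvetkov decomposition: write $u_N = z_N + v_N$, where
\begin{equation*}
z_N(t) = \cos(t|\nb|)\, u_{0,N}^\o + \tfrac{\sin(t|\nb|)}{|\nb|}\, u_{1,N}^\o
\end{equation*}
is the linear wave evolution of the truncated random data, and $v_N := u_N - z_N$ solves the forced cubic NLW
\begin{equation*}
\dt^2 v_N - \Dl v_N + (v_N + z_N)^3 = 0, \qquad (v_N, \dt v_N)|_{t = 0} = (0, 0).
\end{equation*}
The strategy is then to show that $z_N$ converges almost surely in a rough Sobolev space but enjoys improved stochastic $L^p_x$ bounds, while $v_N$ converges almost surely in the smoother energy space $\H^1(\T^3)$.

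First I would control the stochastic object $z_N$. By the independence and Gaussianity of $\{g_n, h_n\}$, the sequence $\{z_N\}$ is easily seen to be Cauchy in $L^p(\O; C([-T,T]; H^s(\T^3)))$ for every $s < \al - \tfrac{3}{2}$ and every $p, T < \infty$, and a Borel-Cantelli argument upgrades this to almost sure convergence in $C(\R; H^s(\T^3))$ to some limit $z^\o$. Beyond Sobolev regularity, randomness provides a crucial gain of spatial integrability: since $\sum_n \jb{n}^{-2\al} < \infty$ for $\al > \tfrac{3}{2}$, a direct Gaussian computation gives $\E |z^\o(t,x)|^2 \les 1$ uniformly in $(t,x)$, and hypercontractivity then yields $\E \|z^\o\|_{L^\infty([-T,T];\, L^r_x)}^p < \infty$ for every finite $p, r$, so that $z_N \to z^\o$ almost surely in $L^\infty_t([-T,T]; L^r_x(\T^3))$ for every $r < \infty$.

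Next I would treat the nonlinear remainder by a deterministic energy argument, regarding $z_N$ as a given forcing. Expanding $(v_N + z_N)^3 = v_N^3 + 3 v_N^2 z_N + 3 v_N z_N^2 + z_N^3$ and testing against $\dt v_N$, the classical defocusing NLW energy identity reads
\begin{equation*}
\tfrac{d}{dt}\Big(\tfrac{1}{2}\|\dt v_N\|_{L^2}^2 + \tfrac{1}{2}\|\nb v_N\|_{L^2}^2 + \tfrac{1}{4}\|v_N\|_{L^4}^4\Big) = -\int_{\T^3} \big(3 v_N^2 z_N + 3 v_N z_N^2 + z_N^3\big)\dt v_N \, dx,
\end{equation*}
and H\"older combined with the embedding $H^1(\T^3) \embeds L^6(\T^3)$ bounds the right-hand side by a polynomial in $\|(v_N, \dt v_N)\|_{\H^1}$ with coefficients given by the $L^\infty_t L^r_x$-norms of $z_N$ controlled in the previous step. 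A Gronwall argument then yields a uniform-in-$N$ bound for $v_N$ in $C([-T,T]; \H^1(\T^3))$ for any $T$, and the same estimate applied to the difference $v_N - v_M$ shows that $\{v_N\}$ is almost surely Cauchy there. Passing to the limit in the Duhamel formula gives a distributional solution $u = z^\o + v$ to \eqref{KG}, as claimed.

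The main obstacle is the probabilistic improvement on $z^\o$ in the first step: for $\al$ close to $\tfrac{3}{2}$, $z^\o$ has Sobolev regularity close to $0$ and lies well below the deterministic critical level $\H^{1/2}$, so its $L^r_x$ bounds cannot be extracted from Sobolev embedding and must be derived purely from the Gaussian structure. Once this stochastic $L^r_x$ estimate is in place, the remainder of the argument reduces to a standard energy estimate, and the threshold $\al > \tfrac{3}{2}$ appears precisely as the regularity needed for $z^\o$ to lie almost surely in $L^\infty_{t, \mathrm{loc}} L^r_x$ for all finite $r$.
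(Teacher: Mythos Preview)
Your overall strategy—the first-order decomposition $u_N=z_N+v_N$, stochastic integrability of $z_N$, and a deterministic energy argument for $v_N$—is exactly the Burq--Tzvetkov scheme the paper invokes. There is, however, a real gap in your globalization step. With only $z_N\in L^\infty_tL^r_x$ for finite $r$, the cross term $\int v_N^2 z_N\,\dt v_N$ does \emph{not} yield a linear differential inequality for the energy: H\"older forces $\|v_N^2 z_N\|_{L^2}\les \|v_N\|_{L^p}^2\|z_N\|_{L^q}$ with $\tfrac{2}{p}+\tfrac{1}{q}=\tfrac12$, and any finite $q$ pushes $p>4$, so that the available controls $\|v_N\|_{L^4}^4\les\mathcal E$ and $\|v_N\|_{H^1}^2\les\mathcal E$ only give a contribution of order $\mathcal E^{>1}$. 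A superlinear Gronwall gives at best a local bound whose iteration does not reach arbitrary $T$.

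The fix is to sharpen the stochastic input: for $\alpha>\tfrac32$ one actually has $z_N\to z$ almost surely in $C([-T,T];W^{s_1,\infty}(\T^3))$ for any $0<s_1<\alpha-\tfrac32$ (this is the $j=1$ case of Proposition~\ref{PROP:ran1}, or rather its easier analogue for $\alpha>\tfrac32$; the point is that $W^{s_1,r}$ for $s_1>0$ and $r$ large embeds into $L^\infty$). In particular $z\in L^\infty_{t,x}$ almost surely, and then
\[
\Big|\int v_N^2\, z_N\,\dt v_N\Big|\les \|z_N\|_{L^\infty}\|v_N\|_{L^4}^2\|\dt v_N\|_{L^2}\les C_z\,\mathcal E,
\]
so that $\mathcal E'\les C_z(1+\mathcal E)$ and Gronwall closes for every $T$. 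The paper's own description of the argument (Subsection~\ref{SUBSEC:fac}) differs from yours in a secondary way: it solves for $v$ at the $\H^{1/2}$ level via the Strichartz space $X_T$ rather than by an $\H^1$ energy estimate, and it records $z\in C_tW^{s_1,\infty}_x$ directly as part of the enhanced data set. Both routes work, but both require the $L^\infty_x$ control of $z$, not merely finite~$L^r$.
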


Here, by uniqueness, we firstly mean that the entire sequence $\{u_N \}_{N\in \N}$ converges
to $u$, not up to some subsequence.
Compare this with 
  the case of weak solution techniques
  (see for example \cite{BTTz, BTT1}), which usually  only give convergence
  up to subsequences.
  Furthermore, 
when we regularize the random initial data $(u_0^\o, u_1^\o)$
 in \eqref{series} by mollification,  
it can be shown that   the limit $u$ is independent
of the choice of mollification kernels.  See Remark \ref{REM:uniq}.
Lastly, as we see in  Subsection~\ref{SUBSEC:fac}, 
 the limit $u$ admits a decomposition
 $u = z_1 + v$, where 
 $z_1$ is the random linear solution, emanating from 
  the random initial data $(u_0^\o, u_1^\o)$, 
and  $v$ is the {\it unique} solution to the perturbed NLW:
\begin{align*}
\begin{cases}
\L v + (v+ z_1)^3 = 0\\
(v, \dt v)|_{ t= 0} = (0, 0),
\end{cases}
\end{align*}

\noi
Similar comments apply to the limiting distribution $u$ in Theorem \ref{THM:3}
below.

For $\al \leq \frac 32$, 
$u_0^\o$ in \eqref{series} is  almost surely no longer a classical function and it should be interpreted 
as a random Schwartz distribution lying in a Sobolev space of negative index. 
Therefore for $\al\leq \frac 32$,
 the study of \eqref{KG} with the random initial data \eqref{series} is no longer 
 within the scope of applicability of  \cite{BT-JEMS,tz-cime}.
The goal of this paper is to 
extend  the results in  \cite{BT-JEMS,tz-cime} to the random initial data 
when they are no longer classical functions.
More precisely, 
we prove the following statement.

\begin{maintheorem}\label{THM:3}
Let $\frac 54 < \al \leq \frac{3}{2}$ and $s<\al-\frac 32$. 
There exists 
a divergent sequence 
$\{\al_N\}_{N \in \N}$ of  positive numbers such that the following holds true;
there exist small  $T_0>0$ 
and positive constants $C$, $c$, $\kappa$
such that for every $T\in (0,T_0]$, 
 there exists a set $\O_T$ of complemental probability smaller than $C\exp(-c/T^\kk)$ 
 such that  if we denote by   $\{u_N\}_{N\in \N}$ 
the  smooth global   solutions to 
\begin{equation}\label{KG_N}
\begin{cases}
\partial_t^2 u_N-\Delta u_N+u_N^3 - \al_N u_N=0\\
(u_N, \dt u_N)|_{t = 0} = (u^\o_{0, N}, u^\o_{1, N}), 
\end{cases}
\end{equation}

\noi
where the random initial data  $(u_{0, N}^\o, u_{1, N}^\o)$ 
is given 
by the truncated Fourier series in  \eqref{series_N},
 then for every $\o\in \O_T$,
 the sequence $\{u_N\}_{N\in \N}$  converges
 to some \textup{(}unique\textup{)} limiting distribution $u$
 in $C([-T,T];H^s(\T^3))$ as $N \to \infty$.
\end{maintheorem}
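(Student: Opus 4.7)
The plan is to implement a second-order Da Prato--Debussche Ansatz. Let $S(t)$ denote the wave propagator, set $z_{1,N}(t) := S(t)(u_{0,N}^\o, u_{1,N}^\o)$, and let $\sigma_N := \E[z_{1,N}(t,x)^2]$. A direct computation shows that $\sigma_N$ is essentially independent of $(t,x)$ (modulo absolutely convergent pieces) and diverges as $N\to\infty$ when $\alpha \leq 3/2$. Choosing $\alpha_N := 3\sigma_N$ and substituting $u_N = z_{1,N} + v_N$ into \eqref{KG_N}, the divergent constants cancel, and $v_N$ satisfies
\[
\mathcal{L} v_N + W^{(3)}_N + 3 W^{(2)}_N v_N + 3 z_{1,N} v_N^2 + v_N^3 = 0, \qquad (v_N, \partial_t v_N)|_{t=0} = (0,0),
\]
where $W^{(k)}_N$ denotes the $k$-th Wick power of $z_{1,N}$ with respect to $\sigma_N$. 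Because $W^{(3)}_N$ already has negative spatial regularity when $\alpha$ is close to $5/4$, a single-order expansion does not close. I would therefore pass to a second-order Ansatz by introducing
\[
z_{3,N}(t) := -\int_0^t \frac{\sin((t-t')|\nabla|)}{|\nabla|} W^{(3)}_N(t')\, dt', \qquad w_N := v_N - z_{3,N},
\]
so that $w_N$ solves a forced equation whose nonlinearity is multilinear in the stochastic objects $z_{1,N}, W^{(2)}_N, z_{3,N}$ and in $w_N$, with trivial Cauchy data.

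The first main step is probabilistic. Using second-moment computations for Fourier coefficients combined with Wiener chaos estimates (Gaussian hypercontractivity), I would establish bounds of the form
\[
\E \bigl\| W^{(k)}_N - W^{(k)}_M \bigr\|_{L^p([-T,T]; W^{-\delta, \infty}(\T^3))}^p \leq C_{p,T} N^{-\eps}
\]
for $M \geq N$ and appropriate $\delta = \delta(\alpha)>0$, $\eps = \eps(\alpha)>0$; an analogous bound holds for $z_{1,N}$ and, after one application of the Duhamel formula, for $z_{3,N}$ in $C([-T,T]; H^{\sigma_0})$ with some $\sigma_0 = \sigma_0(\alpha)>0$. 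A Borel--Cantelli argument along a dyadic sequence $N = 2^j$ upgrades convergence in $L^p(\O)$ to $P$-a.s.\ convergence. Moreover, the chaos estimates provide exponential tails of order $\exp(-c\lambda^{2/k})$ for the stochastic norms, yielding an event $\O_T$ with $P(\O_T^c) \leq Ce^{-c/T^\kappa}$ on which all stochastic quantities are bounded by some prescribed $R = R(T)$.

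The second main step is a deterministic fixed-point argument for $w_N$ on $\O_T$, uniformly in $N$, in a small ball of $C([-T,T]; H^\sigma)$ for some $\sigma>\tfrac12$. The key multilinear estimates are of the schematic type
\[
\| W^{(2)}_N f \|_{H^{\sigma-1}} \lesssim \|W^{(2)}_N\|_{W^{-\delta,\infty}} \|f\|_{H^\sigma}, \qquad \| z_{1,N} fg \|_{H^{\sigma-1}} \lesssim \|z_{1,N}\|_{W^{-\delta',\infty}} \|f\|_{H^\sigma}\|g\|_{H^\sigma},
\]
together with the analogous cubic bound for $w_N^3$; these follow from Kato--Ponce-type fractional product inequalities and paraproduct decompositions once the regularity budgets $\sigma-\delta>0$ and $2\sigma-\delta'>3/2$ are satisfied. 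Combined with the Strichartz inequalities of Lemma~\ref{LEM:Str}, this yields a unique $w_N$ on $[-T,T]$ depending Lipschitz-continuously on the stochastic data. Passing to $N\to\infty$ on $\O_T$, one obtains $w_N\to w$ in $C([-T,T]; H^\sigma)$ and $z_{1,N} + z_{3,N} \to z_1 + z_3$ in $C([-T,T]; H^s)$, hence $u_N \to u := z_1 + z_3 + w$ in $C([-T,T]; H^s)$.

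The hard part is the deterministic multilinear step when $\alpha$ is close to the threshold $5/4$: the trilinear term $z_{1,N} w_N^2$ leaves essentially no slack in the regularity count, so plain Sobolev multiplication does not suffice and one must exploit genuine wave-type Strichartz gains. It is this budget that dictates the sharp lower bound $\alpha>5/4$ and accounts for the bulk of the technical work.
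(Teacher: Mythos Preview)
Your outline matches the paper's second-order expansion strategy (the paper writes $z_{2,N}$ for your $z_{3,N}$) and the overall architecture---probabilistic construction of stochastic objects, then a deterministic contraction in a Strichartz space at regularity $\tfrac12$---is the same. However, there is a genuine gap in your enhanced data set when $\tfrac54 < \alpha \le \tfrac{13}{10}$.

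Once you substitute $v_N = z_{3,N} + w_N$ into the $v_N$-equation, the source term contains the quintic object
\[
W^{(2)}_N \, z_{3,N}.
\]
You propose to handle this by the bilinear product estimate $\|W^{(2)}_N f\|_{H^{\sigma-1}} \lesssim \|W^{(2)}_N\|_{W^{-\delta,\infty}}\|f\|_{H^\sigma}$, which needs the regularities to add to something positive. But $W^{(2)}_N$ lives just below $W^{2(\alpha-\frac32),\infty}$ and $z_{3,N}$ lives just below $H^{3(\alpha-\frac32)+1}$, so the sum of their regularities is $5\alpha - \tfrac{13}{2}$. This is positive only for $\alpha > \tfrac{13}{10}$; for $\tfrac54 < \alpha \le \tfrac{13}{10}$ the resonant part of the product $W^{(2)}_N \, z_{3,N}$ is \emph{not} definable by deterministic paraproduct calculus, and your scheme does not close. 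The paper resolves this by treating
\[
Z_{5,N} := W^{(2)}_N \, z_{3,N}
\]
as an additional element of the enhanced data set, proving directly via second-moment and Wiener-chaos computations that $Z_{5,N}$ converges in $C_T W^{s_5,\infty}$ for any $s_5 < 2(\alpha-\tfrac32)$ (Proposition~\ref{PROP:Z4}). With this extra stochastic object in hand the Duhamel image of $Z_{5,N}$ lands in $H^{1/2}$ precisely when $\alpha > \tfrac54$, and the contraction for $w_N$ closes in $X_T = C_T H^{1/2} \cap L^4_{T,x}$. Your list of stochastic objects $(z_{1,N}, W^{(2)}_N, z_{3,N})$ is therefore insufficient for the full range claimed in the theorem; you must add $Z_{5,N}$ (or at least its resonant part) to the data and estimate it probabilistically.

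Two smaller points: (i) your Borel--Cantelli along dyadic $N=2^j$ would a priori give convergence only along that subsequence, whereas the theorem asserts convergence of the full sequence; the paper gets this from the quantitative $N^{-\delta}$ rate in \eqref{hyper2}. (ii) The fixed point is run in $X_T$, not merely $C_T H^\sigma$; the $L^4_{T,x}$ Strichartz component is essential already for the pure cubic term $w_N^3$.
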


We prove Theorem \ref{THM:3}
by writing $u_N$ in the second order expansion:
$u_N = z_{1, N} + z_{2, N} + w_N$, 
where  $z_{1, N}$ is the  linear solution,\footnote{For a technical reason, 
we take $z_{1, N}$ to satisfy the linear Klein-Gordon equation.
See \eqref{Z1} below.} emanating from 
  the random initial data $(u_0^\o, u_1^\o)$, 
and $z_{2, N}$ denotes the additional term
appearing in the Picard second iterate; see \eqref{Z4}
below.
We first use stochastic analysis to show convergence of $z_{j, N}$, $j = 1, 2$, 
and then show convergence of the residual term $w_N$ by deterministic analysis.

In view of the asymptotic behavior $\al_N \to \infty$, 
one may be tempted to say that the limiting distribution $u$ 
obtained in Theorem~\ref{THM:3}
is a solution 
to the following limit ``equation'':
\begin{equation*} 
\begin{cases}
\partial_t^2 u-\Delta u+u^3 - \infty\cdot  u=0\\
(u, \dt u)|_{t = 0} = (u^\o_{0}, u^\o_{1}), 
\end{cases}
\end{equation*}

\noi
where the random initial data $(u^\o_{0}, u^\o_{1})$ is
as in \eqref{series}.
The expression $\infty \cdot u$
is merely formal and thus 
a natural question is to understand in which sense
$u$ satisfies the cubic NLW on~$\T^3$.
As we see in the next subsection, 
the limit $u$ has the decomposition $
u = z_1 + z_2 + w$ (see~\eqref{L5a}), 
where $z_j$, $j = 1, 2$, denotes the limit of $z_{j, N}$
in a suitable sense
and the residual term $w$ satisfies
the perturbed NLW equation; see  \eqref{KG10} below.
The  uniqueness statement in Theorem~\ref{THM:3} refers
to the uniqueness of $w$ as a solution to this perturbed NLW equation
(along with the uniqueness of various stochastic terms appearing in \eqref{KG10} 
as the limits of their regularized versions); see Remark~\ref{REM:uniq2}.
See also Remark~\ref{REM:uniq} below.
We will describe our strategy  in the next two subsections.
We also refer readers to \cite{OTh2} for a related discussion in the two-dimensional case.

Given fixed $N\in \N$, 
by adapting the classical argument, it is easy to see that 
the truncated equation \eqref{KG_N} is globally well-posed
 in $\H^s(\T^3)$ for $s\geq 1$.
In particular, one needs to apply a Gronwall-type argument to exclude 
a possible finite-time blowup of 
the $\H^1$-norm of a  solution.
The main issue here is that  
there is no good {\it uniform} (in $N$) bound
for the solutions to \eqref{KG_N}.
One may try to extend the local-in-time solutions constructed  in Theorem~\ref{THM:3} globally in time by using truncated energies in the spirit of the  $I$-method, introduced in \cite{CKSTT}. 
See \cite{GKOT}
for such a globalization argument in the context of the two-dimensional stochastic NLW.

Our ultimate goal is to push the analysis in the proof of Theorem~\ref{THM:3} 
to  cover the case $\al=1$, corresponding to the regularity of
the natural Gibbs measure associated with the cubic NLW. 
In the field of singular stochastic parabolic PDEs, there has been a
significant progress in recent years.
In particular, a substantial effort 
\cite{H, GIP, CC, HairerM, MW, AK}
was made to give a proper meaning to the 
 stochastic quantization equation (SQE) on $\T^3$, formally written as
\begin{align}
\dt u -  \Dl u  = - u^3 + \infty \cdot u  + \xi.
\label{SQE1}
\end{align}

\noi
Here,    $\xi$ denotes the so-called space-time white noise.
On the one hand, 
the randomization effects in the present paper are close in spirit to the works cited above.
On the other hand, 
  the deterministic part of the analysis in the context of the heat and the wave equations represent significant differences because, as it is well known, the deterministic regularity theories for these two types of equations are quite different.  
In fact, in order to extend Theorem~\ref{THM:3} to lower values of $\al$, 
it is crucial to exploit dispersion at a multilinear level, 
a consideration specific to dispersive equations, 
and combine it with randomization effects.
See, for example,  a recent work \cite{GKO2} by Gubinelli, Koch, and the first author
on the three-dimensional stochastic NLW with a quadratic nonlinearity.
Furthermore, 
in order to treat lower values of $\al$, 
it will be crucial  to impose a  structure on the residual part~$w$.
See Remark~\ref{REM:WP} for a further discussion.

\begin{remark}\label{REM:uniq}\rm

We say that  $\eta \in C(\R^3 ; [0, 1])$ is a mollification kernel
if $\int \eta dx = 1$ and $\supp \eta \subset (-\pi, \pi]^3\simeq \T^3$.
Given a mollification kernel $\eta$, define $\eta_\eps$ by setting
$\eta_\eps(x) = \eps^{-3} \eta(\eps^{-1} x)$.
Then, $\{\eta_\eps\}_{0 < \eps \leq 1}$ forms an approximate identity on $\T^3$.
By slightly modifying the proof of Theorem \ref{THM:BT}, 
we can show that 
if we denote by $u_\eps$, the solution to \eqref{KG}
with the initial data $(\eta_\eps * u_0^\o, \eta_\eps * u_1^\o)$, 
where $(u_0^\o, u_1^\o)$ is as in \eqref{series}, 
then, for $\al > \frac 32$ and $s < \al - \frac 32$, 
 $u_\eps$  converges in probability   to some (unique) limit $u$ 
  in $C(\R;H^s(\T^3))$ as $\eps \to 0$.
 Here, the limit $u$ is independent of the choice of mollification kernels $\eta$.
Similarly, when $\frac 54 < \al \leq \frac 32$, 
a  slight modification of the proof of Theorem \ref{THM:3}
shows that
there exists
a divergent sequence $\al_\eps$ (as $\eps \to 0$) such that 
the solution $u_\eps$ to 
\begin{equation*}
\begin{cases}
\partial_t^2 u_\eps-\Delta u_\eps+u_\eps^3 - \al_\eps u_\eps=0\\
(u_\eps, \dt u_\eps)|_{t = 0} = (\eta_\eps* u^\o_{0}, \eta_\eps * u^\o_{1}) 
\end{cases}
\end{equation*}

\noi
converges in probability to some (unique) limit $u$
  in $C([-T_\o, T_\o];H^s(\T^3))$, 
where  $T_\o > 0$ almost surely.
Once again, the limit $u_\eps$ is independent of the choice of mollification kernels~$\eta$.

\end{remark}

\begin{remark}\rm
As in \cite{BT1, BT-JEMS}, it is possible to consider a more general class of random initial data.
Let a deterministic pair $(u_0, u_1) \in \H^s(\T^3)$ 
be given by the following Fourier series:
\begin{equation*}
u_0 = \sum_{n \in \Z^3} a_n e^{in\cdot x}
\qquad\text{and}\qquad 
u_1 = \sum_{n \in \Z^3} b_n e^{in\cdot x}
\end{equation*}

\noi
with the constraint $a_{-n} = \cj{a_n}$ and $b_{-n} = \cj{b_n}$, $n \in \Z^3$.
We consider the randomized initial data $(u_0^\o, u_1^\o)$ given by 
\begin{equation*}
u_0^\o = \sum_{n \in \Z^3} g_n(\o)a_n e^{in\cdot x}
\qquad\text{and}\qquad 
u_1^\o = \sum_{n \in \Z^3}h_n(\o)b_n e^{in\cdot x}, 
\end{equation*}

\noi
Then, by slightly modifying the proof of Theorem \ref{THM:3}, 
it is easy to see that,
for $s > - \frac 16$ (corresponding to $\al > \frac 43$ in \eqref{series}),  we can introduce a  time dependent  divergent sequence 
$\{\al_N\}_{N \in \N}$ with $\al_N = \al_N(t)$ such that the solution $u_N$
to \eqref{KG_N} converges to some (unique) limit $u$ in 
$C([-T_\o, T_\o]; H^s(\T^3))$, 
where $T_\o > 0$ almost surely.
For this range of $s$, we need only the first order expansion.
See the next subsection.
For lower values of $s$, 
one may need to impose some additional summability assumptions
on $\{a_n\}_{n \in \Z^3}$ and $\{b_n\}_{n \in \Z^3}$
(in particular to replicate the proof of Proposition \ref{PROP:Z4}
to obtain an analogue of Theorem \ref{THM:3}).

\end{remark}

\subsection{Outline of the proof of Theorem \ref{THM:3}.}
\label{SUBSEC:outline}
In the following, we present the main idea of the proof of Theorem \ref{THM:3}.
Fix  $\al\leq\frac{3}{2}$. 
With the short-hand notation:\footnote {For our subsequent analysis, it will be more convenient to study the linear Klein-Gordon equation 
rather than the linear wave equation. 
}
\begin{align}
\L:=\partial_t^2 -\Delta +1, 
\label{lin0}
\end{align}

\noi
we denote 
by $z_{1,N}=z_{1,N}(t,x,\o)$ the solution  to the following linear Klein-Gordon equation:
\begin{align}
\L z_{1,N}(t,x,\o)=0
\label{Z1}
\end{align}

\noi
with the random initial data 
$(u_{0, N}^\o, u_{1, N}^\o)$ given by the truncated Fourier series in~\eqref{series_N}.
In the following, we discuss spatial regularities of various stochastic terms
for fixed $t \in \R$.
For simplicity of notation, we suppress the $t$-dependence
and discuss spatial regularities.
It is easy to see from \eqref{series_N} that
$z_{1, N}$ converges almost surely 
to some limit $z_{1}$ 
in $H^{s_1}(\T^3)$
as $N \to \infty$,
provided that 
\begin{align}
s_1 < \al - \frac 32.
\label{S1}
\end{align}

\noi
In particular, when $\al \leq \frac 32$, 
$z_{1, N}$  has
negative  Sobolev regularity (in the limiting sense)
and thus 
  $(z_{1,N})^2$ and  $(z_{1,N})^3$
 do not have well-defined limits (in any topology) as $N \to \infty$
 since it involves products of two distributions
 of negative regularities.

Let $u_N$ be the solution to the renormalized NLW \eqref{KG_N}
with the same truncated random initial data 
$(u_{0, N}^\o, u_{1, N}^\o)$  in~\eqref{series_N}.
By writing  $u$ as  
\begin{align}
u_N=z_{1,N}+v_N,
\label{Z1a} 
\end{align}

\noi
we see that the residual term $v_N = u_N - z_{1, N}$ satisfies the following equation:
\begin{equation}\label{L1}
\begin{cases}
\L v_N+ v_N^3 + 
3z_{1,N} v_N^2+
3\big\{(z_{1,N})^2-\s_{N}\big\}v_N
+ \big\{(z_{1,N})^3-3 \s_N z_{1,N}\big\}=0\\
(v_N, \dt v_N ) |_{t=0}=(0, 0), 
\end{cases}
\end{equation}

\noi
where 
the parameter $\s_N$ is defined by 
\begin{align*}
\s_N:=\frac{\al_N+1}{3}.
\end{align*}

\noi
As it is well known, 
 the key point in the equation \eqref{L1} is that 
the terms 
\begin{align}
 Z_{2, N} := (z_{1,N})^2-\s_{N}
\qquad  \text{and} \qquad 
Z_{3, N} := (z_{1,N})^3-3\s_N z_{1,N}
\label{L1a}
\end{align}

\noi
  are ``renormalizations" of 
$(z_{1,N})^2$ and 
$(z_{1,N})^3$. 
Here, by ``renormalizations",
 we mean that  by choosing a suitable renormalization constant $\s_N$, 
the terms $Z_{2, N}$ and $Z_{3, N}$
converge almost surely in suitable negative Sobolev spaces 
as $N \to \infty$.

The regularity $s_1 < \al - \frac 32$ of $z_{1,N}$ (in the limit)
and a simple paraproduct computation
show that if the expressions 
$Z_{2, N} = (z_{1,N})^2-\s_{N}$
and 
$Z_{3, N} = (z_{1,N})^3-3\s_N z_{1,N}$
have any well-defined limits as $N \to \infty$, 
then their regularities in the limit
are expected to be 
\begin{align}
 s_2 <2\bigg(\al-\frac 32\bigg)
\qquad \text{and}
\qquad 
s_3 <3\bigg(\al-\frac 32\bigg),
\label{S2}
\end{align}

\noi
 respectively.
In fact, 
by choosing the renormalization constant $\s_N$ as
\begin{align}
\s_N : = \E\Big[\big(z_{1,N}(t,x,\o)\big)^2 \Big], 
\label{sig}
\end{align}

\noi
we show that $Z_{j, N}$ converges  in $H^{s_j}(\T^3)$
almost surely.
See Proposition \ref{PROP:ran1}.
Note that 
the renormalization constant $\s_N$ a priori depends on $t,x$ 
but it turns out to be independent of $t$ and $x$.\footnote {While we show this fact by a direct computation in \eqref{CN}, 
it can be   seen from the stationarity (in both $t$ and $x$) of the stochastic process
$\{ z_{1, N} (t, x) \}_{(t, x) \in \R \times \T^3}$.  See Remark \ref{REM:sig}.}
We will also see that,  for $N\gg 1$,  $\s_N$ behaves like 
(i) $\sim N^{3-2\al}$ 
when $\al < \frac 32$ and (ii) $\sim \log N$ when $\al = \frac 32$.
See \eqref{CN} below.

\medskip

Thanks to the Strichartz estimates (see Lemma \ref{LEM:Str} below),  the deterministic Cauchy problem 
for 
\[\L v+v^3=0\] 

\noi
is locally well-posed 
in $\H^s(\T^3)$ for  $s\geq \frac 12$.
We may therefore hope to solve the equation~\eqref{L1} 
uniformly in $N \in \N$ by 
the method of \cite{BO96,BT1,BT-JEMS},
 if we can ensure that the solution $v_N$ to  the following linear problem: 
\begin{equation}\label{L2}
\L v_N+\big\{(z_{1,N})^3-3\s_N z_{1,N}\big\}=0
\end{equation}

\noi
with the zero initial data $(v_N, \dt v_N ) |_{t=0}=(0, 0)$
remains bounded in $H^{\frac 12}(\T^3)$ 
as $N \to \infty$. 
Using one degree of smoothing under the wave Duhamel operator
 (see \eqref{basic_reg} below), 
we see that the solution to \eqref{L2} is almost surely  bounded  in $H^\frac{1}{2}(\T^3)$
uniformly in $N \in \N$, provided 
\[3\bigg(\al-\frac{3}{2}\bigg)+1> \frac{1}{2}
\quad \LRA \quad\al>\frac{4}{3}.\]

\noi
Therefore, 
 $\al=\frac{4}{3}$ seems to be the limit of the approach of \cite{BO96,BT1,BT-JEMS}.\footnote{Here, 
 we are not taking into account  a possible multilinear smoothing
 for the solution $v$ to \eqref{L2}.}

 \medskip
 
 In order to go below the $\al=\frac{4}{3}$ threshold,
  a new argument is needed. 
The introduction of such an argument is the main idea of this paper. 
More precisely, we further decompose $v_N$ in \eqref{Z1a}
as 
\begin{align}
v_N=z_{2,N}+w_N
\label{Z4a}
\end{align} 

\noi
for some residual term $w_N$, 
where $z_{2,N}$ is the solution to the following equation:
\begin{align}
\begin{cases}
\L z_{2,N}+\big\{(z_{1,N})^3-3\s_N z_{1,N}\big\}=0\\
(z_{2,N}, \dt z_{2, N} ) |_{t=0}=(0, 0).
\end{cases}
\label{Z4}
\end{align}

\noi
Thanks to the one degree of smoothing, 
we see that $z_{2, N}$ converges to some limit in $H^s(\T^3)$, 
provided that 
\begin{align*}
 s = s_3 + 1 < 3\bigg(\al-\frac{3}{2}\bigg)+1
\end{align*}

\noi
 In terms of the original solution $u_N$ to \eqref{KG_N},
 we have from \eqref{Z1a} and \eqref{Z4a}
 that 
\begin{align} 
u_N=z_{1,N}+z_{2,N}+w_N.
\label{Z5}
\end{align}

\noi
Note that 
$z_{1,N}+z_{2,N}$ corresponds to the Picard second iterate for the truncated
renormalized equation \eqref{KG_N}.

The equation for $w_N$ can now be written as 
 \begin{equation}\label{L3}
\hspace{-2mm}
\begin{cases}
\L w_N+ (w_N + z_{2,N})^3
+ 3z_{1,N} (w_N + z_{2,N})^2
+ 3\big\{(z_{1,N})^2-\s_{N}\big\}(w_N+z_{2,N})=0\\
(w_N, \dt w_N ) |_{t=0}=(0, 0).
\end{cases}
\end{equation}

\noi
By using the second order expansion \eqref{Z5}, 
we have eliminated the most singular term  $Z_{3, N} = (z_{1,N})^3-3\s_N z_{1,N}$ in \eqref{L1}.
In the equation \eqref{L3},  there are several source terms\footnote {Namely, 
purely stochastic terms independent of the unknown $w_N$.} 
and they are precisely 
the quintic, septic, and nonic (i.e.~degree nine) terms added
in considering the Picard third iterate for \eqref{KG_N}.
As we see below,  the most singular term in~\eqref{L3} is 
the following quintic term:
 \begin{equation}\label{L4}
Z_{5, N} :=  3\big\{(z_{1,N})^2-\s_{N}\big\} z_{2,N}, 
\end{equation}

\noi
 where $z_{2, N}$ is the solution to \eqref{Z4}.
As we already mentioned, 
the term $Z_{2, N} = (z_{1,N})^2-\s_{N}$
and 
the second order term  $z_{2,N}$ pass to the limits in $H^s(\T^3)$ for 
$s < 2(\al - \frac 32)$ and 
$s<3(\al-\frac{3}{2})+1$, respectively.
In order to make sense of the product of $Z_{2, N}$ and $z_{2, N}$
in~\eqref{L4}
by deterministic paradifferential calculus 
(see Lemma \ref{LEM:para} below), we need the sum
of the two regularities to be positive, namely
\[  2\bigg(\al - \frac 32\bigg) + 3\bigg(\al-\frac{3}{2}\bigg)+1 >0
\quad \LRA \quad  \al > \frac{13}{10}.\]

\noi
Otherwise, i.e.~for $\al \leq \frac{13}{10}$, 
we will need to make sense of the product \eqref{L4},
using stochastic analysis.
See Proposition \ref{PROP:Z4}.
In either case, 
when the second factor in \eqref{L4} has positive regularity 
$3(\al-\frac{3}{2})+1>0$, i.e.~$\al > \frac76$, 
we show that the product \eqref{L4}
(in the limit) inherits the regularity
from $Z_{2, N} = (z_{1,N})^2-\s_{N}$, 
allowing us to pass to a limit in $H^s(\T^3)$ for
\begin{align*}
s < 2\bigg(\al-\frac{3}{2}\bigg).
\end{align*}

Once we are able to pass the term $Z_{5, N}$ in \eqref{L4}  in the limit $N\rightarrow\infty$,
the main issue in solving the equation \eqref{L3} for $w_N$ by the deterministic Strichartz theory 
is to ensure that the solution of
\begin{equation}\label{L5}
\begin{cases}
\L w+3\big\{(z_{1,N})^2-\s_{N}\big\} z_{2,N}=0\\
(w, \dt w) |_{t=0}=(0, 0)
\end{cases}
\end{equation}

\noi
remains bounded in $H^\frac{1}{2}(\T^3)$ as $N \to \infty$  (recall that $s=\frac 12$ is the threshold regularity for  the deterministic local well-posedness theory for the  cubic wave equation on $\T^3$). 
Using  again one degree of smoothing under the wave Duhamel operator, 
we see that the solution to \eqref{L5} is almost surely bounded 
in $H^\frac{1}{2}(\T^3)$,  provided 
\[
2\bigg(\al-\frac{3}{2}\bigg)+1>\frac{1}{2}
\quad \LRA\quad  \al>\frac{5}{4}.
\]

\noi
This  explains the restriction $\al > \frac 54$ in  Theorem~\ref{THM:3}.  
See also Remark \ref{REM:WP}.
We  point out that  under the restriction $\al > \frac 54$, 
we can use deterministic paradifferential calculus
to  make sense of the product of $z_{1,N}$ and $z_{2,N}^2$ appearing in \eqref{L3},
uniformly in $N \in \N$.

In proving Theorem \ref{THM:3}, 
we apply the deterministic Strichartz theory and  show that $w_N$ converges almost surely to some limit $w$.
Along with the almost sure convergence of $z_{1, N}$ and $z_{2, N}$
to some limits $z_{1}$ and $z_2$, respectively, 
we conclude from the decomposition~\eqref{Z5}
that $u_N$ converges almost surely to 
\begin{align}
 u := z_1 + z_2 + w.
 \label{L5a}
\end{align}

\noi
By taking a limit of \eqref{L3} as $N \to \infty$, 
we see that $w$ is almost surely the solution to
 \begin{equation}
\begin{cases}
\L w+ (w + z_{2})^3
+ 3z_{1} (w + z_{2})^2
+ 3Z_{2}  w + 3Z_{5}=0\\
(w, \dt w ) |_{t=0}=(0, 0), 
\end{cases}
\label{KG10}
\end{equation}

\noi
where $Z_2$ and $Z_5$ are the limits of $Z_{2, N}$ in \eqref{L1a} and $Z_{5, N}$ in \eqref{L4},
respectively.
This essentially explains the proof of Theorem \ref{THM:3}.

\begin{remark}\rm

The expansion~\eqref{L5a} provides finer descriptions of $u$ at different scales;
the roughest term $z_1$ is essentially responsible for the small scale behavior of $u$, 
while $z_2$ describes its mesoscopic behavior and the smoother remainder part
$w$ describes its large-scale behavior.

\end{remark}

\begin{remark}\label{REM:WP}
\rm

The argument based on the first order expansion \eqref{Z1a}
goes back to the work of 
McKean~\cite{McKean} and 
Bourgain \cite{BO96} in the study of invariant Gibbs measures
for the nonlinear Schr\"odinger equations on $\T^d$, $d = 1, 2$.
See also \cite{BT1}.
In the field of stochastic parabolic  PDEs, 
this argument is usually referred to as the Da~Prato--Debussche trick \cite{DPD2}.

As we explained above, 
the novelty in this paper with respect to the previous work \cite{BO96,BT1,BT-JEMS} is that the proof of Theorem~\ref{THM:3} crucially relies on the second order expansion~\eqref{Z5}.
We also mention two other recent works
\cite{BOP, OTW},  where such higher order expansions were used in the context of dispersive PDEs
with random initial data. 
The higher order expansions used in~\cite{OTW}  are at negative Sobolev regularity but 
they are related to a gauge transform, which is 
very different from the situation in the present paper. 
The difference between the present paper and  \cite{BOP} is 
that, in this paper, we work in Sobolev spaces of negative indices, while 
solutions  in \cite{BOP} have positive Sobolev regularities.\footnote{In particular, 
 all the products make sense as functions in \cite{BOP}. In negative Sobolev spaces, the main problem
 is to make sense of a product as a distribution.} 
We point out that while the higher order expansions
helped lowering the regularity of random initial data in \cite{BOP}, 
the third order expansion would {\it not} help us improve Theorem \ref{THM:3}
for our problem.\footnote{That is, unless we combine it with multilinear smoothing
and imposing a further structure (such as a paracontrolled structure) on $w$.}
This can be seen from the product $Z_2 w$ in \eqref{KG10}.
From the regularity $s_2$ of $Z_2$ in \eqref{S2}
 and the regularity $\frac 12$ of~$w$, 
we see that the sum of their regularity is positive
(which is needed to make sense of the product $Z_2 w$)
only for $\al > \frac 54$.
This  is exactly the range covered in Theorem \ref{THM:3}.
Note that  a higher order expansion is used
to eliminate certain explicit stochastic terms.
Namely, even if we go into a higher order expansion, 
we can not eliminate this problematic term $Z_2 w$
since this term depends on the unknown $w$. 
In order to lower values of $\al$, 
we need to impose a structure of the residual term $w$.

For conciseness of the presentation, 
we decided to present only the simplest argument based
on the second order expansion.
There are, however, 
several ways for a possible improvement on the regularity restriction in Theorem \ref{THM:3}.
(i)~In studying the regularity and convergence properties
of  the second order stochastic term $z_{2, N}$ in \eqref{Z4}, 
we simply use a ``parabolic thinking", 
namely, we only count 
 the regularity $s_1 < \al - \frac 32$ of each of three factors $z_{1, N}$ for $Z_{3, N}$
 (modulo the renormalization)
and put them together with one degree of smoothing
coming from the wave Duhamel integral operator
{\it without} taking into account the explicit product structure
and the oscillatory nature of the linear wave propagator.
See Proposition \ref{PROP:z2} below.
In the field of dispersive PDEs, however, 
 it is crucial to 
exploit an explicit product structure
and study interaction of waves at a multilinear level
to show a further smoothing property.
See, for example,  \cite{GKO2, OOcomp, Bring2}.
In this sense, the argument presented in this paper
leaves a room for an obvious improvement.
(ii)~In recent study of singular stochastic parabolic PDEs
such as SQE \eqref{SQE1} on $\T^3$, 
higher order expansions (in terms of the stochastic forcing in the mild formulation)
were combined with the theory of regularity structures~\cite{H} or the paracontrolled calculus~\cite{CC, MW, AK}.
In fact, it is possible to employ the ideas from the paracontrolled calculus
in studying  nonlinear wave equations.
See  a recent work~\cite{GKO2} on the three-dimensional stochastic NLW
with a quadratic nonlinearity.

In a very recent preprint \cite{Bring2}
(which appeared more than one year after the appearance of the current paper), 
Bringmann studied the defocusing cubic NLW with a Hartree-type nonlinearity
on $\T^3$: 
\begin{equation}\label{KG99}
\partial_t^2 u-\Delta u+(V*u^2)u =0, 
\end{equation}

\noi
where $V = \jb{\nb}^{-\be}$ is the Bessel potential 
of order $\be > 0$.
By 
adapting the paracontrolled approach 
of \cite{GKO2} to the Hartree cubic nonlinearity
and
exploiting multilinear smoothing,\footnote{Also, combining
other tools such as the random matrix estimates from \cite{DNY2}.} 
Bringmann proved almost sure local (and global) well-posedness
of \eqref{KG99} 
with the Gibbs measure initial data (essentially corresponding
to the random initial data $(u_0^\o, u_1^\o)$ in \eqref{series}
with $\al =1$),
provided that $\be > 0$.
In the context of the renormalized cubic NLW on $\T^3$, 
it seems possible to adapt  the methodology developed in \cite{Bring2}
and extend Theorem \ref{THM:3}
to $\al > 1$.
When $\al = 1$ (i.e.~\eqref{KG99} with $\be = 0$), 
the argument in \cite{Bring2}  breaks
down in various places and thus further novels ideas are needed to treat the case
$\al = 1$.
We also mention a recent work~\cite{DNY2} by Deng, Nahmod, and Yue,
where they introduced the theory of random tensors
in studying the random data Cauchy theory 
for the nonlinear Schr\"odinger equations.
While this theory is fairly general, as it is pointed out in 
 \cite[Remark 1.6 and Subsection 4.4]{Bring2}, 
 there are some technical challenges in extending
 the theory in \cite{DNY2} to the wave case.

\end{remark}

\subsection{Factorization of the ill-posed solution map}
\label{SUBSEC:fac}

In the following, let us consider initial data of the form:
\begin{align}
(u, \dt u) |_{t = 0} = (w_0, w_1) + (u_0^\o, u_1^\o),
\label{IV1}
\end{align}

\noi
where $(w_0, w_1)$ is a given pair of deterministic functions in $\H^\frac{1}{2}(\T^3)$
and $(u_0^\o, u_1^\o)$ is the random initial data given  in \eqref{series}.
Recall that the random initial data in \eqref{IV1}
belongs almost surely to $\H^{\min(\frac12, s)}(\T^3)$
for $s < \al - \frac 32$.
When $\al > 2$,  the deterministic local well-posedness in $\H^\frac{1}{2}(\T^3)$
yields
a continuous solution map\footnote {Here, the local well-posedness time
is indeed random but we simply write it as $T$.  The same comment applies in the following.}
\begin{align*}
\Phi: 
(w_0, w_1) + (u_0^\o, u_1^\o)
 \in \H^\frac{1}{2}(\T^3) \longmapsto 
( u, \dt u)  \in C([-T, T]; \H^\frac{1}{2}(\T^3)).
 \end{align*}

\noi
On the other hand, when $\al \leq 2$, 
the random initial data in \eqref{IV1} does not belong to $\H^\frac{1}{2}(\T^3)$.
In particular, the ill-posedness results in \cite{Xia, OOTz, FOk}
show that, given any  $(w_0, w_1) \in \H^\frac{1}{2}(\T^3)$, 
 the solution map $\Phi$ is  almost surely discontinuous. 

For $\frac 32 < \al \leq 2$, 
the proof of Theorem \ref{THM:BT}, presented  in \cite{BT-JEMS},   based on the first order expansion~\eqref{Z1a}
yields the following factorization of the ill-posed solution map $\Phi$:
\begin{align}
\begin{split}
(w_0, w_1) + (u_0^\o, u_1^\o)
& \longmapsto
(w_0, w_1, z_1) \stackrel{\Psi_1}{\longmapsto} 
(v, \dt v)  \in C([-T, T]; \H^\frac{1}{2}(\T^3))\\
& \longmapsto
u = z_1 + v  \in C([-T, T]; H^{s_1}(\T^3)), 
\end{split}
\label{IV3}
\end{align}

\noi
where 
 $z_1$ is the solution to the linear  equation \eqref{Z1}
with the random initial data $(u_0^\o, u_1^\o)$ in \eqref{series}
and $s_1 < \al - \frac 32$.
Here, we view 
the first map in \eqref{IV3}  as a  lift map, 
where we use stochastic analysis to construct  an enhanced data set $(w_0, w_1, z_1)$, 
and the second map $\Psi_1$ is the deterministic solution map to the following perturbed NLW:
\begin{align*}
\begin{cases}
\L v + (v+ z_1)^3 = 0\\
(v, \dt v)|_{ t= 0} = (w_0, w_1),
\end{cases}
\end{align*}

\noi
where we view $(w_0, w_1, z_1)$ as an {\it enhanced data set}.\footnote {In particular, we view $z_1$ 
as a given deterministic space-time distribution of some specified regularity.}
Furthermore, the deterministic map $\Psi_1 : (w_0, w_1, z_1)
\mapsto (v, \dt v)$
is continuous from 
\[\mathcal{X}_1^{s_1}(T) : = \H^\frac{1}{2}(\T^3) \times C([-T, T]; W^{s_1, \infty}(\T^3))\]

\noi
to $C([-T, T]; \H^\frac{1}{2}(\T^3))$.

\begin{remark}\rm
In \cite{BT-JEMS},  using a conditional probability, 
Burq and the third author
introduced the notion of probabilistic continuity
and showed that the map: 
$(w_0, w_1) + (u_0^\o, u_1^\o)
\mapsto u$ in~\eqref{IV3}
is indeed probabilistically continuous
when $\frac 32 < \al \leq 2$.
It would be of interest to investigate
if such probabilistic continuity
also holds
for lower values of $\al$.

\end{remark}

For $\frac 43 < \al \leq \frac 32$, 
 the first order expansion \eqref{Z1a}
 along with renormalization
yields the following factorization of the ill-posed solution map $\Phi$:
\begin{align}
\begin{split}
(w_0, w_1) + (u_0^\o, u_1^\o)
& \longmapsto
(w_0, w_1, z_1, Z_2, Z_3) \stackrel{\Psi_2}{\longmapsto} 
(v, \dt v)  \in C([-T, T]; \H^\frac{1}{2}(\T^3))\\
& \longmapsto
u = z_1 + v  \in C([-T, T]; H^{s_1}(\T^3)), 
\end{split}
\label{IV4}
\end{align}

\noi
where 
$Z_2$ and $Z_3$ are the limits of $Z_{2, N}$ and $Z_{3, N}$ in \eqref{L1a}.
With $s_j$, $j = 1, 2, 3$, as in \eqref{S1} and \eqref{S2}, 
 the second map $\Psi_2$ is the deterministic continuous  map, 
sending an enhanced data set $(w_0, w_1, z_1, Z_2, Z_3)$ 
in 
\[\mathcal{X}_2^{s_1, s_2, s_3}(T) : = \H^\frac{1}{2}(\T^3) \times 
\prod_{j = 1}^3C([-T, T]; W^{s_j, \infty}(\T^3))
\]

\noi
to a solution $(v, \dt v)\in C([-T, T]; \H^\frac{1}{2}(\T^3))$ to the following perturbed NLW:
\begin{equation*}
\begin{cases}
\L v+ v^3 + 
3z_{1} v^2+
3Z_2 v
+ Z_3=0\\
(v, \dt v ) |_{t=0}=(w_0, w_1).
\end{cases}
\end{equation*}

For $\frac {13}{10} < \al \leq \frac 43$, 
the proof of Theorem \ref{THM:3}  based on the second order expansion \eqref{Z5}
yields the following factorization of the ill-posed solution map $\Phi$:
\begin{align}
\begin{split}
(w_0, w_1) + (u_0^\o, u_1^\o)
& \longmapsto
(w_0, w_1, z_1, Z_2, z_2) \stackrel{\Psi_3}{\longmapsto} 
(w, \dt w)  \in C([-T, T]; \H^\frac{1}{2}(\T^3))\\
& \longmapsto
u = z_1 +z_2 + w  \in C([-T, T]; H^{s_1}(\T^3)),
\end{split}
\label{IV6}
\end{align}

\noi
where  $z_2$ is the limit of $z_{2, N}$ defined in \eqref{Z4}.
Here, with $s_ 4 = s_3 + 1 < 3(\al - \frac 32) + 1$, 
 the second map $\Psi_3$ is the deterministic continuous  map,  
sending an enhanced data set $(w_0, w_1, z_1, Z_2, z_2)$ 
in 
\[\mathcal{X}_3^{s_1, s_2, s_4}(T) : = \H^\frac{1}{2}(\T^3) \times 
\prod_{j \in\{1, 2, 4\} }C([-T, T]; W^{s_j, \infty}(\T^3))
\]

\noi
to a solution $(w, \dt w)\in C([-T, T]; \H^\frac{1}{2}(\T^3))$ to the following perturbed NLW:
 \begin{equation}
\begin{cases}
\L w+ (w + z_{2})^3
+ 3z_{1} (w + z_{2})^2
+ 3Z_{2}  w + 3Z_{2} z_2=0\\
(w, \dt w ) |_{t=0}=(w_0, w_1).
\end{cases}
\label{IV7}
\end{equation}

Lastly, let us discuss the case $\frac 54 < \al \leq \frac {13}{10}$.
In this case, the product $Z_2 z_2$ in \eqref{IV7}
can not be defined by deterministic paradifferential calculus
and thus we need to define
$Z_5$ as 
a limit of $Z_{5, N}$ in \eqref{L4}.
Then, 
the proof of Theorem \ref{THM:3}  based on the second order expansion~\eqref{Z5}
yields the following factorization of the ill-posed solution map $\Phi$:
\begin{align}
\begin{split}
(w_0, w_1) + (u_0^\o, u_1^\o)
& \longmapsto
(w_0, w_1, z_1, Z_2, z_2, Z_5) \stackrel{\Psi_4}{\longmapsto} 
(w, \dt w)  \in C([-T, T]; \H^\frac{1}{2}(\T^3))\\
& \longmapsto
u = z_1 +z_2 + w  \in C([-T, T]; H^{s_1}(\T^3)).
\end{split}
\label{IV8}
\end{align}

\noi
With $s_5 < 2(\al - \frac 32)$, 
the second map $\Psi_4$ is the deterministic continuous  map, 
sending an enhanced data set $(w_0, w_1, z_1, Z_2, z_2, Z_5)$ 
in 
\begin{align*}
\mathcal{X}_4^{s_1, s_2, s_4, s_5}(T) : = \H^\frac{1}{2}(\T^3) \times 
\prod_{j \in \{1, 2, 4, 5\}} C([-T, T]; W^{s_j, \infty}(\T^3))
\end{align*}

\noi
to a solution $(w, \dt w) \in C([-T, T]; \H^\frac{1}{2}(\T^3))$ to the following perturbed NLW:
 \begin{equation}
\begin{cases}
\L w+ (w + z_{2})^3
+ 3z_{1} (w + z_{2})^2
+ 3Z_{2}  w + Z_5=0\\
(w, \dt w ) |_{t=0}=(w_0, w_1).
\end{cases}
\label{IV9}
\end{equation}

We point out that the last decomposition \eqref{IV8} with \eqref{IV9}
can also be used to study the cases $\frac 43 < \al \leq \frac 32$ and  $\frac{13}{10} < \al \leq \frac 43$.
For simplicity of the presentation, 
we only discuss the last decomposition~\eqref{IV8} with \eqref{IV9}
in this paper, 
while the previous decompositions~\eqref{IV4} and~\eqref{IV6}
provide simpler arguments
when $\frac{13}{10} < \al \leq \frac 32$.

In all the cases mentioned above, 
we decompose the ill-posed solution map $\Phi$
into 
\begin{itemize}
\item[(i)] the first step, constructing enhanced data sets by  stochastic analysis
and 

\item [(ii)] the second step, where purely deterministic analysis is performed
in constructing a continuous map $\Psi_j$ on enhanced data sets,
solving perturbed NLW equations.

\end{itemize}

\noi
Such decompositions of ill-posed solution maps
also appear in studying rough differential equations via the rough path theory~\cite{Lyons, FH}
and 
singular stochastic parabolic PDEs~\cite{H, GIP}.

\begin{remark}\label{REM:uniq2}\rm
By the use of stochastic analysis, 
the terms
$z_1$, $z_2$, $Z_2$, and $Z_5$
are defined as the unique limits
of their truncated versions.
Furthermore, 
by deterministic analysis, we prove that 
a solution $w$ to \eqref{IV9}
is pathwise unique in an appropriate class (see the space $X_T$ defined in \eqref{X1}).
Therefore, under the decomposition  $u = z_1 +z_2 + w$, 
the uniqueness of $u$ claimed in Theorem \ref{THM:3}
refers to 
(i) the uniqueness of $z_1$ and $z_2$ 
as the limits of $z_{1, N}$ and $z_{2, N}$
and (ii) the uniqueness of $w$ as a solution to \eqref{IV9}.

\end{remark}

\begin{remark} \rm
Given $j \in \N_0 := \N\cup\{0\}$,  let $\P_{j}$
 be the (non-homogeneous) Littlewood-Paley projector
 onto the (spatial) frequencies $\{n \in \Z^3: |n|\sim 2^j\}$
 such that 
 \[ f = \sum_{j = 0}^\infty \P_jf.\]
 
 \noi
Given two functions $f$ and $g$ on $\T^3$
of regularities $s_1$ and $s_2$, 
we 
have the following paraproduct decomposition of the product $fg$
due to Bony~\cite{Bony}:
\begin{align}
fg & \hspace*{0.3mm}
= f\pl g + f \pe g + f \pg g\notag \\
& := \sum_{j < k-2} \P_{j} f \, \P_k g
+ \sum_{|j - k|  \leq 2} \P_{j} f\,  \P_k g
+ \sum_{k < j-2} \P_{j} f\,  \P_k g.
\label{para1}
\end{align}

\noi
The first term 
$f\pl g$ (and the third term $f\pg g$) is called the paraproduct of $g$ by $f$
(the paraproduct of $f$ by $g$, respectively)
and it is always well defined as a distribution
of regularity $\min(s_2, s_1+ s_2)$.
On the other hand, 
the resonant product $f \pe g$ is well defined in general 
only if $s_1 + s_2 > 0$.
See Lemma \ref{LEM:para} below.

Let $\frac 54 < \al \leq \frac {13}{10}$.
In this case, the sum of the regularities $s_2 < 2(\al -\frac 32)$
and $s_4 < 3(\al - \frac 32) + 1$ of $Z_2$ and $z_2$
is non-positive and thus we can not make sense of the product $Z_2 z_2$
by deterministic paradifferential calculus.
As we pointed out above, however, 
the paraproducts 
$Z_2 \pl z_2$ and $Z_2 \pg z_2$ are well-defined distributions.
Hence, it suffices to 
 define $Z^{\pe}_5$ as a suitable limit of the resonant products
$ Z_{2, N} \pe z_{2, N}$
in order to pass $3 Z_{2, N} z_{2, N}$ to the limit
\[ Z_5 = 
3 Z_2 \pl z_2 + 3Z_5^{\pe} + 3Z_2 \pg z_2 .\]

\noi
This shows that 
 we can in fact replace the enhanced data set $(w_0, w_1, z_1, Z_2, z_2, Z_5) $
in~\eqref{IV8} 
and $Z_5$ in~\eqref{IV9}
by 
$(w_0, w_1, z_1, Z_2, z_2, Z_5^{\pe})$
and $3Z_2 \pl z_2 + 3Z_5^{\pe} + 3Z_2 \pg z_2 $, respectively.
See also the proof of Proposition~\ref{PROP:Z4} and Remark~\ref{REM:res} below.

\end{remark}

\subsection{NLW without renormalization in negative Sobolev spaces}

We conclude this introduction by discussing 
a new instability phenomenon for NLW \eqref{KG} (that is, without renormalization) in negative Sobolev spaces.
This phenomenon is closely related to the so-called {\it triviality}
in the study of stochastic PDEs
\cite{russo4, HRW}.
See Remark \ref{REM:trivial}.

Fix  a deterministic pair $(w_0, w_1) \in \H^\frac{3}{4}(\T^3)$.
In the following, we  study the (un-renormalized)  NLW \eqref{KG}
with initial data of the form:
\begin{align*}
(u, \dt u )|_{t = 0}  = (w_0, w_1) + (0, u_1^\o), 
\end{align*}

\noi
where $u_1^\o$ is the random distribution given by \eqref{series}.
We consider  this problem by studying 
the following truncated problem.
Given $N \in \N$, 
let $u_N$ be the solution to the (un-renormalized)
NLW~\eqref{KG}
with the following initial data:
\begin{align*}
 (u_N, \dt u_N)|_{t= 0} 
= (w_0, w_1) + 
(\wt u_{0, N}^\o, \wt u_{1, N}^\o). 
\end{align*}

\noi
Here, 
$(\wt u_{0, N}^\o, \wt u_{1, N}^\o)$ denotes the truncated random initial data 
given by 
\begin{equation}\label{series_N2}
\wt u_{0, N}^\o(x) = \sum_{|n|\leq N } \frac{g_n(\o)}{\nbn \jb{n}^{\al-1}}e^{in\cdot x}
\qquad\text{and}\qquad 
\wt u_{1, N}^\o(x) = \sum_{|n|\leq N} \frac{h_n(\o)}{\jb{n}^{\al-1}}e^{in\cdot x},
\end{equation}

\noi
where $\{g_n\}_{n \in \Z^3}$ and $\{h_n\}_{n \in \Z^3}$
are as in \eqref{series}
and 
\begin{align}
 \nbn = \sqrt{C_N + |n|^2}
\label{N1}
\end{align}

\noi
for some suitable choice of a divergent constant $C_N > 0$.
Our goal  is to study the asymptotic behavior of $u_N$ as $N \to \infty$.

Given $N\in \N$, define the linear Klein-Gordon operator $\L_N$ by 
setting
\begin{align}
\L_N :=\partial_t^2 -\Delta +C_N.
\label{lin2}
\end{align}

\noi
Then, 
$u_N$ satisfies
the following equation:
\begin{equation}\label{KG2a}
\begin{cases}
\L_N  u_N+u_N^3 - C_N u_N=0\\
(u_N, \dt u_N)|_{t = 0} = (w_0, w_1) + (\wt u^\o_{0, N}, \wt u^\o_{1, N}).
\end{cases}
\end{equation}

\noi
We denote by $\wt z_{1, N}$ the solution to the following linear Klein-Gordon equation:
\begin{align}
\L_N \wt z_{1,N}= 0
\label{T1}
\end{align}

\noi
with 
the truncated random initial data $(\wt u_{0, N}^\o, \wt v_{0, N}^\o)$ in~\eqref{series_N2}.
Then, we have
\begin{align}
\wt z_{1,N}(t,x,\o)
= \sum_{|n|\leq N }\frac{\cos (t \nbn)} {\nbn \jb{n}^{\al -1}}  g_n(\o)e^{in\cdot x}
+ \sum_{|n|\leq N }\frac{\sin (t \nbn)} {\nbn \jb{n}^{\al-1}}  h_n(\o)e^{in\cdot x}.
\label{T2}
\end{align}

\noi
In particular, for each fixed $(t, x) \in \R\times \T^3$, 
$\wt z_{1,N}(t,x)$ is a mean-zero Gaussian random variable with variance:
\begin{align}
\begin{split}
\wt \s_N : \! & = \E\big[(\wt z_{1, N}(t, x))^2\big]\\
& = \sum_{|n|\leq N }\frac{(\cos (t \nbn))^2 } {\nbn^2 \jb{n}^{2(\al-1)}} 
+ \sum_{|n|\leq N }\frac{(\sin (t \nbn))^2} {\nbn^2 \jb{n}^{2(\al-1)}}\\
& = 
\sum_{|n|\leq N }\frac{1} {\nbn^2 \jb{n}^{2(\al-1)}}.
 \end{split}
\label{T3}
\end{align}

\noi
In view of \eqref{KG2a},
we implicitly define $C_N >0$ by 
\begin{align}
\begin{split}
 C_N  & = 3\wt \s_N 
= 3 \sum_{|n|\leq N }\frac{1} {\nbn^2 \jb{n}^{2(\al-1)}}\\
&  = 3 \sum_{|n|\leq N }\frac{1} {(C_N + |n|^2) \jb{n}^{2(\al-1)}}
\end{split}
\label{CN2}
\end{align}

\noi
such that the subtraction of $ C_N u_N$  in \eqref{KG2a} corresponds to 
(artificial) renormalization of the cubic nonlinearity $u_N^3$.
In Lemma \ref{LEM:CN} below, 
we show that for each $N \in \N$, 
there exists unique $C_N \geq 1$
whose 
asymptotic behavior of $C_N$ as $N \to \infty$
is given by 
\begin{align*}
C_N 
\sim \begin{cases}
\log N, & \text{for } \al = \frac 32,\\
N^{3 - 2\al}, & \text{for } 1\leq \al < \frac 32,\\
\end{cases}
\end{align*}

\noi
for all sufficiently large $N \gg1 $.
In particular, $C_N \to \infty$ as $N \to \infty$
and thus we see that 
  $(\wt u_{0, N}^\o, \wt u_{1, N}^\o)$ 
in  \eqref{series_N2} 
almost surely
converges to $(0, u_1^\o)$
in a suitable topology.

We are now ready to state an instability result 
for the (un-renormalized) NLW \eqref{KG} in negative Sobolev spaces.

\begin{maintheorem}\label{THM:Tri}
Let $\frac 54 < \al < \frac{3}{2}$ and 
 $(w_0, w_1) \in \H^\frac{3}{4}(\T^3)$.
By setting $C_N$ by \eqref{CN2}, 
there exist small  $T_1>0$ 
and positive constants $C$, $c$, $\kappa$
such that for every $T\in (0,T_1]$, 
 there exists a set $\O_T$ of complemental probability smaller than $C\exp(-c/T^\kk)$ 
 such that  if we denote by   $\{ u_N\}_{N\in \N}$ 
the  smooth global   solutions to the defocusing cubic NLW \eqref{KG}
with the random initial data
\begin{align}
 (u_N, \dt u_N)|_{t= 0} 
= (w_0, w_1) + 
(\wt u_{0, N}^\o, \wt u_{1, N}^\o), 
\label{T3a}
\end{align}

\noi
where   $(\wt u_{0, N}^\o, \wt u_{1, N}^\o)$ 
is given 
by  \eqref{series_N2},
 then for every $\o\in \O_T$,
 the sequence $\{u_N\}_{N\in \N}$  converges
 to $0$ as space-time distributions on $[-T, T]\times \T^3$ 
  as $N \to \infty$.

When $  \al = \frac{3}{2}$, the same result holds
but only along a subsequence $\{N_k\}_{k \in \N}$.
Namely, 
there exists an almost surely positive random time $T_\o > 0$ such that 
 the sequence $\{u_{N_k}\}_{k \in \N}$  converges
 to~$0$ as space-time distributions on $[-T_\o, T_\o]\times \T^3$ 
  as $j \to \infty$
  \textup{(}in the sense described above\textup{)}.

\end{maintheorem}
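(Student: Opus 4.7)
The plan is to recast the un-renormalized equation \eqref{KG} as the renormalized Klein--Gordon equation \eqref{KG2a} with artificial mass $C_N$, and apply the second-order expansion $u_N = \wt z_{1,N} + \wt z_{2,N} + w_N$ as in the proof of Theorem~\ref{THM:3}. The key new feature is that $C_N \to \infty$ (like $N^{3-2\al}$ for $\al < \frac 32$, like $\log N$ for $\al = \frac 32$), and this divergence plays two roles simultaneously: the extra factor $\nbn^{-1}$ in \eqref{series_N2} produces $C_N^{-1/2}$-smallness for the stochastic objects on low frequencies and $|n|^{-1}$-smallness on high frequencies, while the large mass in $\L_N = \partial_t^2 - \Delta + C_N$ acts as a constraint on the residual $w_N$, forcing it to vanish distributionally.

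The first step is to rerun the stochastic estimates from the proof of Theorem~\ref{THM:3} in the present Klein--Gordon setup, tracking the extra $\nbn^{-1}$ attenuation. A variance computation analogous to \eqref{T3}--\eqref{CN2} yields $\E\big[\|\wt z_{1,N}\|_{H^{s_1}}^2\big] \lesssim C_N^{s_1} \to 0$ for any $s_1 < 0$, and similar quantitative vanishing holds for $\wt Z_{2,N} := (\wt z_{1,N})^2 - \wt \sigma_N$, for the second-order term $\wt z_{2,N}$ defined by $\L_N \wt z_{2,N} + (\wt z_{1,N})^3 - 3 \wt\sigma_N \wt z_{1,N} = 0$ with zero data, and for the resonant piece $\wt Z_{5,N}$ of $3\wt Z_{2,N} \wt z_{2,N}$ (the analogue of Proposition~\ref{PROP:Z4}). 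Combined with the Wiener chaos tail bounds already used for Theorem~\ref{THM:3}, this produces an exceptional set $\O_T^c$ of probability $\le C \exp(-c/T^\kappa)$ outside of which each of $\wt z_{1,N}, \wt Z_{2,N}, \wt z_{2,N}, \wt Z_{5,N}$ converges to $0$ in $C([-T,T]; W^{s_j, \infty}(\T^3))$ for the corresponding index $s_j$. At the threshold $\al = \frac 32$ the critical sums degenerate logarithmically, and one must pass to a subsequence $\{N_k\}$ along which the estimates close.

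Next, I would invoke the deterministic continuity of the solution map $\Psi_4$ in \eqref{IV8}--\eqref{IV9}, with $\L$ replaced by $\L_N$ (whose Strichartz and one-degree-of-smoothing properties are uniform in $C_N \ge 1$, e.g.\ by rescaling) and with the deterministic initial data $(w_0, w_1) \in \H^{\frac 34} \subset \H^{\frac 12}$. This produces a common existence time $T_1 > 0$ on $\O_T$ and a uniform-in-$N$ bound for the residual $w_N$ in $C([-T,T]; H^{\frac 12}(\T^3))$, where $w_N$ satisfies
\begin{equation*}
\L_N w_N + (\wt z_{2,N} + w_N)^3 + 3\wt z_{1,N}(\wt z_{2,N} + w_N)^2 + 3\wt Z_{2,N}(\wt z_{2,N} + w_N) = 0
\end{equation*}
with $(w_N, \dt w_N)|_{t = 0} = (w_0, w_1)$. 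To pass to triviality, rewrite this as $C_N w_N = -(\dt^2 - \Delta) w_N - F_N$ (with $F_N$ the cubic source above), and pair with $\phi \in C^\infty_c((-T,T) \times \T^3)$ to get
\begin{equation*}
\jb{w_N, \phi} = -\frac{1}{C_N}\Big[\jb{w_N, (\dt^2 - \Delta)\phi} + \jb{F_N, \phi}\Big].
\end{equation*}
On $\O_T$, both terms inside the bracket are $O(1)$ uniformly in $N$: $w_N$ is bounded in $C_T H^{\frac 12}$, while $F_N$ is bounded in a distribution space dual to $C^\infty$ (the negative-regularity factors $\wt z_{1,N}, \wt Z_{2,N}$ pair with the smooth $\phi$ by Sobolev duality, and $w_N, \wt z_{2,N}$ are controlled in $C_T H^{\frac 12}$). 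Hence $\jb{w_N, \phi} = O(1/C_N) \to 0$. Combined with the strong (hence distributional) vanishing of $\wt z_{1,N}, \wt z_{2,N}$ from the previous step, the decomposition $u_N = \wt z_{1,N} + \wt z_{2,N} + w_N$ delivers $\jb{u_N, \phi} \to 0$, as desired.

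The chief obstacle is the stochastic analysis of the first step: upgrading the almost-sure boundedness estimates of Theorem~\ref{THM:3} to quantitative $C_N^{-\kappa}$-type decay for every enhanced-data component, uniformly on exponentially small exceptional sets. The analogue of Proposition~\ref{PROP:Z4} for $\wt Z_{5,N}$ requires the most care, because one must track how the extra $\nbn^{-1}$ factors and the $C_N$-dependent oscillations $\cos(t\nbn), \sin(t\nbn)$ propagate through the cubic Wiener chaos; at $\al = \frac 32$ this is where the need for a subsequence $\{N_k\}$ arises.
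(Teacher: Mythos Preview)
Your overall strategy matches the paper's: reformulate as \eqref{KG2a}, show the stochastic objects $\wz_{1,N},\wZ_{2,N},\wz_{2,N},\wZ_{5,N}$ decay to zero with rate $C_N^{-\dl}$ (Lemma~\ref{LEM:stoa} in the paper), and run the fixed-point argument of Section~\ref{SEC:LWP} with $\L_N$ in place of $\L$ to get a uniform $X_T$ bound on the residual $w_N$.

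One imprecision: your claim that ``Strichartz \dots\ are uniform in $C_N\ge 1$, e.g.\ by rescaling'' is not correct for the \emph{homogeneous} part. As the paper points out in Remark~\ref{REM:Str}, the homogeneous Strichartz estimate for $S_{C_N}(t)$ is naturally stated in the $H^s_{C_N}$-scale, which blows up with $C_N$. The paper instead bounds $\|S_{C_N}(t)(w_0,w_1)\|_{L^4_{T,x}}\les T^{1/4}\|(w_0,w_1)\|_{\H^{3/4}}$ by Sobolev embedding (Lemma~\ref{LEM:lina}); this is precisely why the hypothesis is $\H^{3/4}$ rather than $\H^{1/2}$. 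You mention $\H^{3/4}$ but should make this substitution explicit. Also, the need for a subsequence at $\al=\tfrac32$ arises already for $\wZ_{1,N}$ in the Borel--Cantelli step (since $C_N\sim\log N$ makes $\exp(-cC_N^{2(\dl_0-\dl)})$ non-summable over $N$), not only for $\wZ_{5,N}$.

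Your argument for $w_N\to 0$ is genuinely different from the paper's, and in fact more elementary. The paper splits the Duhamel formula $w_N=S_{C_N}(t)(w_0,w_1)+\L_N^{-1}(\wt F_0+\cdots+\wt F_3)$: the stochastic pieces $\L_N^{-1}(\wt F_0+\wt F_1+\wt F_2)$ vanish in $X_T$ by Lemma~\ref{LEM:stoa}, the cubic piece $\L_N^{-1}(\wt F_3)$ vanishes in $L^\infty_TL^2_x$ by the $C_N^{-\dl}$ gain in the Duhamel operator, and the linear piece $S_{C_N}(t)(w_0,w_1)$ vanishes distributionally via a Riemann--Lebesgue argument (second part of Lemma~\ref{LEM:lina}). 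Your route---rewriting $C_Nw_N=-(\partial_t^2-\Delta)w_N-F_N$ and dividing by $C_N$ after pairing with a test function---bypasses Riemann--Lebesgue entirely and handles all pieces at once. It works because the uniform $X_T$ bound on $w_N$ already forces $F_N$ to be bounded in $L^1_TH^{-1/2}+L^{4/3}_{T,x}$ (by the estimates of Subsections~\ref{SUBSEC:pf2}--\ref{SUBSEC:pf4}), hence $\jb{F_N,\phi}=O(1)$. The paper's approach gives slightly stronger information (actual norm convergence of the Duhamel pieces, not just distributional), but yours is a cleaner path to the stated conclusion.
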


The proof of Theorem \ref{THM:Tri} is based on the reformulation \eqref{KG2a}
and an adaptation of the argument employed in proving Theorem \ref{THM:3}.

We point out that 
the instability  stated in Theorem \ref{THM:Tri}
is due to the lack of  renormalization (in negative regularity).
Indeed, let us briefly discuss the situation
when a proper renormalization is applied.
Consider  the following renormalized NLW:
\begin{equation}
\partial_t^2 u_N-\Delta u_N+u_N^3 - 3\wt \s_N u_N=0
\label{KG5}
\end{equation}

\noi
with the random initial data in \eqref{T3a}.
First, note that 
the initial data in \eqref{T3a}
gives rise to an enhanced data set 
\[\Xi_N = (w_0, w_1, \wz_{1, N}, \wZ_{2,N},  \wz_{2, N},
\wZ_{5, N}),\]

\noi 
where $\wZ_{2,N}$ and $\wZ_{5, N}$
are defined by
\begin{align*}
 \wZ_{2,N}  & :=(\wz_{1,N})^2-\ws_{N}
 \qquad \text{and}\qquad 
 \wZ_{5,N}   :  =\big\{(\wz_{1,N})^2-\ws_{N}\big\} \wz_{2,N}
\end{align*}

\noi
and  $\wz_{2, N}$ is the solution to 
\begin{align*}
\begin{cases}
\L_N \wz_{2,N}+\big\{(\wz_{1,N})^3-3\wt \s_N \wz_{1,N}\big\}=0\\
(z_{2,N}, \dt z_{2, N} ) |_{t=0}=(0, 0).
\end{cases}
\end{align*}

\noi
In Section \ref{SEC:tri}, 
we show that
$\Xi_N$ converges almost surely\footnote{Only along a subsequence $\{N_k\}_{k\in \N}$
when $\al = \frac 32$.} to the limiting enhanced data set
\[\Xi = (w_0, w_1, \wz_{1}, \wZ_{2}, \wz_2, 
\wZ_{5}),\]

\noi
emanating from the initial data
$(w_0, w_1) + (0, u_1^\o)$.
Then, by   slightly modifying  the proof of Theorem \ref{THM:3}, 
we can show that 
the solutions $u_N$ to \eqref{KG5} 
converges to 
 some non-trivial limiting distribution $u = \wz_{1} + \wz_2 + w$, 
where $w$ is the solution to 
 \begin{equation*}
\begin{cases}
\L w+ (w + \wz_{2})^3
+ 3\wz_{1} (w + \wz_{2})^2
+ 3\wZ_{2}  w + \wZ_5=0\\
(w, \dt w ) |_{t=0}=(w_0, w_1).
\end{cases}
\end{equation*}

\noi
Here, we see that $u \not \equiv 0$ 
since the non-zero linear solution $\wz_1$  with initial data $(0, u_1^\o)$
does not belong to $H^{\al - \frac 32 }(\T^3)$ (for a fixed time)
while $z_2 + w \in H^{\al- \frac 32}(\T^3)$ almost surely.
This shows 
the instability result stated in Theorem \ref{THM:Tri}
is peculiar to the case without renormalization
when we work in negative regularities.

\begin{remark}\label{REM:trivial}\rm
The instability result in Theorem \ref{THM:Tri} essentially 
corresponds to triviality results in the study of stochastic PDEs,
where the dynamics without renormalization
trivializes (either to the linear dynamics or the trivial dynamics, i.e.~$u\equiv 0$)
as regularization on a singular random forcing is removed.
See, for example, \cite{russo4, russo5, HRW, OOR, ORSW}.
In particular, our proof of Theorem \ref{THM:Tri} is inspired by the argument in \cite{HRW}
due to Hairer, Ryser, and Weber
for the two-dimensional stochastic nonlinear heat equation.
In the context of the random data Cauchy theory, 
Theorem \ref{THM:Tri} is the first result on triviality 
without renormalization.

In the context of stochastic nonlinear wave equations, 
Albeverio, Haba,  and Russo  \cite{russo4}
studied a triviality issue for the two-dimensional stochastic NLW:
\begin{equation}
\label{SNLW3}
 \dt^2 u - \Dl  u  + f(u) =  \xi, 
\end{equation}

\noi
where 
  $\xi$ is the space-time white noise
  and 
$f$ is a bounded smooth function.
Roughly speaking, they showed that 
solutions to~\eqref{SNLW3} with regularized noises
tend to that to the stochastic linear wave equation:
\[ \dt^2 u - \Dl  u   =  \xi. \]

\noi
Note that
a power-type nonlinearity (such as the  cubic nonlinearity $u^3$)
does not belong to the class of nonlinearities considered in \cite{russo4}.
Furthermore, 
the analysis in~\cite{russo4} was carried out in the framework of Colombeau generalized functions,
and as such, their solution does not a priori belong to 
$C([0, T]; H^{-\eps}(\T^2))$.
In fact, it is not clear if their generalized function represents an actual distribution.
We refer the interested readers to Remark 1.5 in \cite{HairerS} by Hairer and Shen, 
commenting on the work 
\cite{russo5} by 
Albeverio, Haba,  and Russo
on stochastic nonlinear heat equations.
We emphasize that 
our proof of Theorem \ref{THM:Tri}
is based on an adaptation of the solution theory 
for Theorem~\ref{THM:3}.
In particular, for each $N \in \N$, 
we construct the solution~$u_N$ to 
the defocusing cubic renormalized NLW \eqref{KG5}
with the random initial data
in \eqref{T3a}
in the natural space:
$C([0, T]; H^{s}(\T^3))$, 
$s<\al-\frac 32$, 
with a uniform bound in $N \in \N$.
We also point out that a small adaptation of 
the proof of Theorem \ref{THM:Tri}
(as in \cite{OOR})
yield a triviality result
for the following stochastic damped NLW
with the defocusing cubic nonlinearity on~$\T^3$:
\[ \dt^2 u + \dt u -\Dl u + u^3 = \jb{\nb}^{-\al} \xi\]

\noi
for $\frac 54 < \al\le \frac 32$.

After the appearance
of the current paper, 
following  the idea of our triviality result (Theorem \ref{THM:Tri}), 
Okamoto, Robert, and the first author
\cite{OOR}
proved triviality for 
the two-dimensional stochastic 
damped NLW
with the defocusing cubic nonlinearity.
The argument in \cite{OOR} is based on 
an adaptation of the recent solution
theory of the two-dimensional stochastic (damped)
nonlinear wave equations in~\cite{GKO, GKOT}.
We also mention 
a recent work~\cite{ORSW}
by Robert, Sosoe, Y.\,Wang, and the first author
on a triviality result
for  the two-dimensional stochastic wave equation with the sine nonlinearity:
$f(u) = \sin (\be u)$, $\be \in \R\setminus\{0\}$.
While the sine nonlinearity belongs to the class of nonlinearities
considered in~\cite{russo4}, 
the triviality result in \cite{ORSW} is established in the natural class
$C([0, T]; H^{-\eps}(\T^2))$.

In the context of nonlinear Schr\"odinger type equations, 
such instability results without renormalization in negative Sobolev spaces
are known
even {\it deterministically}; see  \cite{GO, OW}.
See also \cite{Ch1, Ch2}
for a similar instability result on the complex-valued mKdV equation
in the deterministic setting.

\end{remark}

\begin{remark}\rm

When $\al = \frac 32$, 
Theorem \ref{THM:Tri} 
yields the almost sure convergence of a subsequence $\{u_{N_k}\}_{k \in \N}$
to $0$ (on a random time interval).
By changing the mode of convergence 
and the related topology, 
it is possible to obtain convergence of the full sequence 
$\{u_{N}\}_{N\in \N}$,  even when $\al = \frac 32$.
More precisely, by slightly modifying the proof of 
Theorem \ref{THM:Tri}, 
we can show that, when $\al = \frac 32$,  
$\{u_{N}\}_{N\in \N}$ 
converges 
 in probability 
to the trivial solution $0$
in $H^{-\eps}([-T_\o, T_\o]; H^{-\eps}(\T^3))$
as $N \to \infty$.
See 
\cite{OOR}
for details of the proof
in the two-dimensional stochastic setting.

\end{remark}

\begin{remark}\rm
In the discussion above, 
we needed to consider the random data $(\wt u_{0, N}^\o, \wt u_{1, N}^\o)$ in~\eqref{series_N2}
in place of $(u^\o_{0, N}, u^\o_{1, N})$ in~\eqref{series_N}
such that $C_N$ can be chosen to be  time independent.
Note that the distribution of  $(\wt u_{0, N}^\o, \wt u_{1, N}^\o)$ in \eqref{series_N2}
is precisely an invariant measure
for the linear dynamics: $\L_N u = 0$.

\end{remark}

\begin{remark}\rm
While the local-in-time results in Theorems~\ref{THM:BT} and \ref{THM:3} 
also holds in the focusing case, 
the proof of Theorem \ref{THM:Tri} only holds for the defocusing case.
In the focusing case, 
we expect some undesirable behavior for solutions to the (un-renormalized) cubic NLW in negative Sobolev spaces
but with a different mechanism.

\end{remark}

\subsection{Organization of the paper}
The remaining part of this manuscript is organized as follows. 
In the next section, 
we state deterministic and stochastic tools needed for our analysis.
In Sections \ref{SEC:sto1} and  \ref{SEC:sto2}, 
we study regularity and convergence properties
of the stochastic terms from Subsection \ref{SUBSEC:outline}.
In Section \ref{SEC:LWP}, we then use the deterministic Strichartz theory
to study the equation \eqref{L3} for $w_N$ and present the proof of  Theorem \ref{THM:3}.
In Section~\ref{SEC:tri}, 
by modifying the analysis from the previous sections, 
we prove Theorem \ref{THM:Tri}.

\section{Tools from deterministic and stochastic analysis}

\subsection{Basic function spaces and paraproducts}

We  define the $L^p$-based Sobolev space $W^{s, p}(\T^3)$
by the norm:
\begin{align*}
\| f \|_{W^{s, p}} = \big\| \F^{-1} (\jb{n}^s \ft f(n))\big\|_{L^p}
\end{align*}

\noi
with the standard modification when $p = \infty$.
When $p = 2$, we have $H^s(\T^3) = W^{s, 2}(\T^3)$.

Next, we recall the regularity properties
of paraproducts and resonant products, viewed as bilinear maps.
For this purpose,  
it is convenient to use the   Besov spaces $B^s_{p, q}(\T^3)$
defined by the norm:
\begin{equation*}
\| u \|_{B^s_{p,q}} = \Big\| 2^{s j} \| \P_{j} u \|_{L^p_x} \Big\|_{\l^q_j(\N_0)}.
\end{equation*}

\noi
Note that  $H^s(\T^3) = B^s_{2,2}(\T^3)$.

\begin{lemma}\label{LEM:para}
\textup{(i) (paraproduct and resonant product estimates)}
Let $s_1, s_2 \in \R$ and $1 \leq p, p_1, p_2, q \leq \infty$ such that 
$\frac{1}{p} = \frac 1{p_1} + \frac 1{p_2}$.
Then, we have 
\begin{align}
\| f\pl g \|_{B^{s_2}_{p, q}} \les 
\|f \|_{L^{p_1}} 
\|  g \|_{B^{s_2}_{p_2, q}}.  
\label{para2a}
\end{align}

\noi
When $s_1 < 0$, we have
\begin{align}
\| f\pl g \|_{B^{s_1 + s_2}_{p, q}} \les 
\|f \|_{B^{s_1 }_{p_1, q}} 
\|  g \|_{B^{s_2}_{p_2, q}}.  
\label{para2}
\end{align}

\noi
When $s_1 + s_2 > 0$, we have
\begin{align}
\| f\pe g \|_{B^{s_1 + s_2}_{p, q}} \les 
\|f \|_{B^{s_1 }_{p_1, q}} 
\|  g \|_{B^{s_2}_{p_2, q}}  .
\label{para3}
\end{align}

\noi
\textup{(ii)}
Let $s_1 <  s_2 < s_3$ and $1\leq p, q \leq \infty$.
Then, we have 
\begin{align} 
\| u \|_{W^{s_1, p}}
\les 
\| u \|_{B^{s_2}_{p,q}} 
&\les \| u \|_{W^{s_3, p}}.
\label{embed}
\end{align}

\end{lemma}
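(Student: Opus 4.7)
The plan is to prove all three product estimates by combining the Littlewood--Paley decomposition with H\"older's inequality, exploiting the Fourier-support constraints on the paraproduct pieces. Two structural facts drive everything: (a)~for $j<k-2$, the product $\P_j f\cdot \P_k g$ has Fourier support in an annulus $\{|n|\sim 2^k\}$, so $\P_l(f\pl g)$ is non-zero only for $|l-k|\le O(1)$; (b)~for $|j-k|\le 2$, the product has Fourier support in a ball $\{|n|\les 2^{j+2}\}$, so $\P_l(f\pe g)$ receives contributions only from $j\ges l-O(1)$. Once these are recorded, each estimate reduces to H\"older plus an elementary summation whose convergence is controlled by the stated sign condition on $s_1$ or $s_1+s_2$.

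For \eqref{para2a}, I would fix $l$ and, by (a), write $\P_l(f\pl g)$ as a finite sum over $k$ with $|k-l|\le O(1)$ of terms $\P_l\big((S_{k-2}f)\,\P_k g\big)$, where $S_{k-2}=\sum_{j<k-2}\P_j$. Since the partial-sum operator $S_{k-2}$ is bounded on $L^{p_1}$ uniformly in $k$, H\"older gives $\|\P_l(f\pl g)\|_{L^p}\les \|f\|_{L^{p_1}}\|\wt\P_l g\|_{L^{p_2}}$ for an aggregated projector $\wt\P_l$; multiplying by $2^{s_2 l}$ and taking $\ell^q_l$ yields the bound. For \eqref{para2}, I would instead estimate $\|S_{k-2}f\|_{L^{p_1}}\le\sum_{j<k-2}\|\P_j f\|_{L^{p_1}}$, apply H\"older in $j$ with exponents $q',q$, and use
\[
\bigg(\sum_{j<k-2}2^{-s_1 jq'}\bigg)^{1/q'}\les 2^{-s_1 k},
\]
which is valid precisely because $s_1<0$. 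This gives $\|\P_l(f\pl g)\|_{L^p}\les 2^{-s_1 l}\|f\|_{B^{s_1}_{p_1,q}}\|\wt\P_l g\|_{L^{p_2}}$, and the weighted $\ell^q_l$ norm then closes the argument.

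For the resonant product \eqref{para3}, H\"older combined with (b) gives
\[
\|\P_l(f\pe g)\|_{L^p}\les \sum_{j\ges l-O(1)}\|\P_j f\|_{L^{p_1}}\,\|\wt\P_j g\|_{L^{p_2}},
\]
where $\wt\P_j$ aggregates the projectors with $|k-j|\le 2$. Multiplying by $2^{(s_1+s_2)l}$ and inserting $1=2^{-(s_1+s_2)j}\cdot 2^{(s_1+s_2)j}$ produces a geometric weight $2^{-(s_1+s_2)(j-l)}$ that is summable over $j\ges l$ precisely because $s_1+s_2>0$. A discrete Young-type estimate against this $\ell^1$ weight reduces the problem to bounding $\|a_j b_j\|_{\ell^q_j}$ with $a_j=2^{s_1 j}\|\P_j f\|_{L^{p_1}}$ and $b_j=2^{s_2 j}\|\wt\P_j g\|_{L^{p_2}}$; the elementary inequality $\|a_j b_j\|_{\ell^q}\le \|a\|_{\ell^q}\|b\|_{\ell^\infty}\le \|a\|_{\ell^q}\|b\|_{\ell^q}$ then decouples the factors and delivers $\les\|f\|_{B^{s_1}_{p_1,q}}\|g\|_{B^{s_2}_{p_2,q}}$.

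For part (ii), I would combine two classical ingredients: the Besov monotonicity embedding $B^t_{p,q_1}\hookrightarrow B^{t'}_{p,q_2}$ whenever $t'<t$, which is immediate from $2^{t'j}=2^{-(t-t')j}\cdot 2^{tj}$ and the $\ell^r$-summability of $2^{-(t-t')j}$ for every $1\le r\le\infty$; and the classical sandwich $B^s_{p,1}\hookrightarrow W^{s,p}\hookrightarrow B^s_{p,\infty}$ for $1\le p\le\infty$, which follows from Mihlin-type multiplier bounds on Littlewood--Paley blocks. Chaining these under the strict inequalities $s_1<s_2<s_3$ produces
\[
W^{s_3,p}\hookrightarrow B^{s_3}_{p,\infty}\hookrightarrow B^{s_2}_{p,q}\hookrightarrow B^{s_1}_{p,1}\hookrightarrow W^{s_1,p},
\]
which is exactly \eqref{embed}. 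The only real bookkeeping challenge throughout is pinning down the Fourier-support regions in the paraproduct arguments; no individual estimate is delicate once those are in place.
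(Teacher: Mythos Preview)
Your proposal is correct and follows exactly the standard approach that the paper has in mind: the paper does not give a detailed proof but simply notes that the product estimates follow from the definition~\eqref{para1} (citing \cite{BCD, MW2}) and that~\eqref{embed} follows from the $\ell^q$-summability of $\{2^{(s_k-s_{k+1})j}\}_{j\in\N_0}$ together with the uniform $L^p$-boundedness of $\P_j$. Your sketch supplies precisely these details.
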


The product estimates \eqref{para2a},  \eqref{para2},  and \eqref{para3}
follow easily from the definition \eqref{para1} of the paraproduct 
and the resonant product.
See \cite{BCD, MW2} for details of the proofs in the non-periodic case
(which can be easily extended to the current periodic setting).
The embeddings~\eqref{embed}
 follow from the $\l^{q}$-summability 
of $\big\{2^{(s_k - s_{k+1})j}\big\}_{j \in \N_0}$ for $s_k < s_{k+1}$, $k = 1, 2$, 
and the uniform boundedness of the Littlewood-Paley projector $\P_j$.
Thanks to \eqref{embed}, 
we can apply the product estimates \eqref{para2a}, \eqref{para2}, and \eqref{para3}
in the Sobolev space setting (with a slight loss of regularity).

\subsection{Product estimates, an interpolation inequality, and Strichartz estimates}

For $s\in\R$, we set
$\jb{\nb}^{s}:=(1-\Delta)^{\frac{s}{2}}$.
Then, we have the following standard product estimates.
See \cite{GKO} for their proofs.

\begin{lemma}\label{LEM:gko}
Let $0\leq s\leq 1$.

\noi
\textup{(i)}
Let  $1<p_j,q_j,r<\infty$, $j=1,2$ such that $\frac{1}{r}=\frac{1}{p_j}+\frac{1}{q_j}$.
Then, we have 
$$
\|\jb{\nb}^s(fg)\|_{L^r(\T^3)}\lesssim\| \jb{\nb}^s f\|_{L^{p_1}(\T^3)} \|g\|_{L^{q_1}(\T^3)}+ \|f\|_{L^{p_2}(\T^3)} 
\|  \jb{\nb}^s g\|_{L^{q_2}(\T^3)}.
$$

\noi
\textup{(ii)}
Let $1<p,q,r<\infty$ such that $s \geq   3\big(\frac{1}{p}+\frac{1}{q}-\frac{1}{r}\big)$.
Then, we have
$$
\|\jb{\nb}^{-s}(fg)\|_{L^r(\T^3)}
\lesssim\| \jb{\nb}^{-s} f\|_{L^{p}(\T^3)} \| \jb{\nb}^{s} g\|_{L^{q}(\T^3)} .
$$
\end{lemma}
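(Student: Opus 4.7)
The plan is to prove both bilinear inequalities by the standard Kato-Ponce / Coifman-Meyer route, reducing everything to the paraproduct decomposition \eqref{para1}, Littlewood-Paley square function characterizations of $W^{s,r}(\T^3)$ for $1<r<\infty$, and Sobolev embedding. Both estimates are proven in this style in \cite{GKO}, and I would reproduce that argument.

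For part (i), I would apply \eqref{para1} to write $fg = f\pl g + f\pe g + f\pg g$ and estimate $\jb{\nb}^s$ of each piece in $L^r$. For the low-high piece $f\pl g = \sum_{j<k-2}\P_j f \cdot \P_k g$, each summand is frequency-localized at scale $2^k$, so applying $\jb{\nb}^s$ acts effectively as multiplication by $2^{sk}$. Using the pointwise control $\big|\sum_{j<k-2}\P_j f\big| \lesssim Mf$ by the Hardy-Littlewood maximal function, followed by Hölder and the Littlewood-Paley square-function characterization of $\jb{\nb}^s g \in L^{q_2}$, produces the term $\|f\|_{L^{p_2}}\|\jb{\nb}^s g\|_{L^{q_2}}$. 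The high-low piece $f\pg g$ is symmetric and yields $\|\jb{\nb}^s f\|_{L^{p_1}}\|g\|_{L^{q_1}}$. For the resonant piece $f\pe g$, the $\jb{\nb}^s$ can be placed on either factor (with at most a harmless loss handled by \eqref{embed}), so it is controlled by either RHS term.

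For part (ii), I would argue by duality. Writing
\begin{align*}
\|\jb{\nb}^{-s}(fg)\|_{L^r} = \sup_{\|h\|_{L^{r'}}\le 1}\Big|\int_{\T^3} fg\,\jb{\nb}^{-s}h\,dx\Big| = \sup_{\|h\|_{L^{r'}}\le 1}\big|\langle f,\, g\cdot \jb{\nb}^{-s}h\rangle\big|,
\end{align*}
I would use the duality between $W^{-s,p}$ and $W^{s,p'}$ to bound the pairing by $\|\jb{\nb}^{-s}f\|_{L^p}\,\|g\cdot \jb{\nb}^{-s}h\|_{W^{s,p'}}$. Applying part (i) to the product $g\cdot \jb{\nb}^{-s}h$ splits the second factor into two pieces, and Sobolev embedding is then used to bound $\|\jb{\nb}^{-s}h\|_{L^{r_1}} \lesssim \|h\|_{L^{r'}}$ and $\|g\|_{L^{q_2}}\lesssim \|\jb{\nb}^s g\|_{L^q}$. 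A direct computation shows that the compatibility of these two Sobolev scalings with the Hölder constraint $\tfrac1{p'}=\tfrac1q+\tfrac1{r_1}=\tfrac1{q_2}+\tfrac1{r'}$ is exactly the hypothesis $s \geq 3(\tfrac{1}{p}+\tfrac{1}{q}-\tfrac{1}{r})$.

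The main obstacle is bookkeeping of Lebesgue exponents: in part (i) the high-low case requires the Fefferman-Stein vector-valued maximal inequality to convert $\sum_{j<k}\|\P_j f\|$-type expressions back into genuine $L^{p_i}$-norms while preserving the square-function structure in the other factor; in part (ii), one must verify that the two terms coming from the Leibniz inequality each admit valid Sobolev embeddings under the \emph{single} scaling threshold on $s$, i.e.\ that neither term is ``sharper'' than what the hypothesis provides.
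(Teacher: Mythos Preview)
Your proposal is correct and follows the standard Coifman--Meyer / Kato--Ponce route that the paper has in mind: the paper does not give a proof here but simply refers to \cite{GKO}, noting only that the case $s \geq 3(\tfrac1p+\tfrac1q-\tfrac1r)$ in part (ii) follows from the equality case by a straightforward modification. Your duality-plus-Sobolev argument for (ii) is exactly that modification, and your exponent computation correctly recovers the scaling threshold on $s$; the only caveat worth recording is that the auxiliary exponents $r_1,q_2$ must lie in $(1,\infty)$ for part (i) to apply, which may require a routine case split (or an $\varepsilon$-perturbation of the exponents) at the boundary of the admissible range.
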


Note that
while  Lemma \ref{LEM:gko} (ii) 
was shown only for 
$s =   3\big(\frac{1}{p}+\frac{1}{q}-\frac{1}{r}\big)$
in \cite{GKO}, 
the general case
$s \geq    3\big(\frac{1}{p}+\frac{1}{q}-\frac{1}{r}\big)$
follows from a straightforward modification.

\medskip

Next, we state an interpolation inequality.
This lemma allows us to reduce an estimate on the $L^\infty$-norm in time
to that with the $L^q$-norm in time for some finite $q$.

\begin{lemma}\label{LEM:BTT}
Let $T > 0$ and $1 \leq q, r \leq \infty$.
Suppose that $s_1, s_2, s_3 \in \R$ satisfy $s_2 \leq s_1$
and 
\[ q( s_1 - s_3) > s_1 - s_2.\]

\noi
Then, we have
\begin{align*}
\| u \|_{L^\infty([-T, T]; W^{s_3, r}(\T^3))}
\les 
\| u \|_{L^q([-T, T]; W^{s_1, r}(\T^3))}^{1-\frac 1q}
\| u \|_{W^{1, q}([-T, T]; W^{s_2, r}(\T^3))}^\frac{1}{q}.
\end{align*}

\noi
Here, the $W^{1, q}([-T, T];W^{s, r}(\T^3))$-norm is defined by 
\[\|f \|_{W^{1, q}([-T, T];W^{s, r}(\T^3))} 
=\|f \|_{L^q([-T, T];W^{s, r}(\T^3))} + \|\dt f \|_{L^q([-T, T];W^{s, r}(\T^3))} . \]
\end{lemma}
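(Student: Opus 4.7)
The plan is to combine a one-dimensional Gagliardo--Nirenberg embedding in the time variable with a spatial Littlewood--Paley decomposition, using the strict inequality $q(s_1-s_3) > s_1-s_2$ to create an $\varepsilon$-room for passing between Sobolev and Besov scales.

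The starting point is the scalar interpolation inequality: for any Banach space $X$ and any $q \in [1, \infty]$,
\begin{equation}\label{agmon}
\|f\|_{L^\infty([-T,T]; X)} \lesssim_T \|f\|_{L^q([-T,T]; X)}^{1-\frac{1}{q}}\|f\|_{W^{1,q}([-T,T]; X)}^{\frac{1}{q}},
\end{equation}
which follows from the standard embedding $W^{1,q}([-T,T]) \hookrightarrow L^\infty([-T,T])$. I would apply \eqref{agmon} to each Littlewood--Paley piece $\P_j u$ with $X = L^r(\T^3)$, obtaining
\begin{equation*}
\|\P_j u\|_{L^\infty_t L^r_x} \lesssim \|\P_j u\|_{L^q_t L^r_x}^{1-\frac{1}{q}}\|\P_j u\|_{W^{1,q}_t L^r_x}^{\frac{1}{q}}
\end{equation*}
for each $j \in \N_0$. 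By the strict inequality in the hypothesis, I would pick $s_3' \in (s_3, s_1)$ with $q(s_1-s_3') > s_1-s_2$, so that $\delta := s_1(1-\tfrac{1}{q}) + \tfrac{s_2}{q} - s_3' > 0$. Multiplying the previous bound by $2^{s_3' j}$ and splitting $2^{s_3' j} = 2^{-\delta j} \cdot 2^{s_1 j(1-\frac{1}{q})} \cdot 2^{s_2 j/q}$ yields
\begin{equation*}
2^{s_3' j}\|\P_j u\|_{L^\infty_t L^r_x} \lesssim 2^{-\delta j}\bigl(2^{s_1 j}\|\P_j u\|_{L^q_t L^r_x}\bigr)^{1-\frac{1}{q}}\bigl(2^{s_2 j}\|\P_j u\|_{W^{1,q}_t L^r_x}\bigr)^{\frac{1}{q}}.
\end{equation*}
Taking $\sup_{j \in \N_0}$, using the trivial bound $\sup_j(a_j b_j c_j) \leq (\sup_j a_j)(\sup_j b_j)(\sup_j c_j)$ for nonnegative sequences, and exchanging $\sup_j$ past time integrals (with the one-sided bound $\sup_j \|f_j\|_{L^q_t} \leq \|\sup_j |f_j|\|_{L^q_t}$), I obtain
\begin{equation*}
\|u\|_{L^\infty_t B^{s_3'}_{r,\infty}(\T^3)} \lesssim \|u\|_{L^q_t B^{s_1}_{r,\infty}(\T^3)}^{1-\frac{1}{q}}\|u\|_{W^{1,q}_t B^{s_2}_{r,\infty}(\T^3)}^{\frac{1}{q}}.
\end{equation*}

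To conclude, I would invoke the embeddings $W^{s, r}(\T^3) \hookrightarrow B^{s}_{r, \infty}(\T^3)$ on the right-hand side and $B^{s_3'}_{r, \infty}(\T^3) \hookrightarrow W^{s_3, r}(\T^3)$ on the left-hand side (which is valid since $s_3 < s_3'$), both coming from Lemma~\ref{LEM:para}(ii). The main technical subtlety is that for $r \neq 2$, $W^{s, r}$ is a Triebel--Lizorkin space rather than a Besov space, so a direct dyadic summation estimate in $W^{s, r}$ is unavailable; the strict inequality in the hypothesis is precisely what supplies the room to lose an $\varepsilon$ of regularity on the left-hand side when passing from $B^{s_3'}_{r, \infty}$ back to $W^{s_3, r}$, while the equal (non-strict) inequality $s_2 \leq s_1$ ensures that the splitting of the exponents $1-\tfrac{1}{q}$ and $\tfrac{1}{q}$ makes sense in the interpolation identity defining $\delta$.
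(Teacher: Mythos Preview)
Your proof is correct and follows essentially the same strategy the paper indicates: Gagliardo--Nirenberg in the time variable combined with a spatial Littlewood--Paley decomposition, using the strict inequality $q(s_1-s_3)>s_1-s_2$ to create the $\varepsilon$-room needed to pass between $W^{s,r}$ and Besov scales. The only cosmetic difference is that the paper's sketch mentions ``duality in $x$'' as the device for handling the $W^{s,r}$ norms when $r\neq 2$, whereas you bypass this by working directly in $B^{s}_{r,\infty}$ and invoking the embeddings $W^{s,r}\hookrightarrow B^{s}_{r,\infty}$ and $B^{s_3'}_{r,\infty}\hookrightarrow W^{s_3,r}$; both routes are standard and lead to the same estimate.
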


The proof of Lemma \ref{LEM:BTT} follows from duality in $x$
and
Gagliardo-Nirenberg's inequality in $t$ along with 
standard analysis based on (spatial) Littlewood-Paley decompositions.
See the proofs of  Lemmas 3.2 and 3.3 in \cite{BTT1} for the $r = 2$ case.
The proof for the general case follows
from a straightforward modification.

\medskip

We now recall the Strichartz estimates.
Let $\L$ be the Klein-Gordon operator in \eqref{lin0}.
We use 
$\L^{-1}= (\dt^2 - \Dl + 1)^{-1}$ to denote the Duhamel integral operator, corresponding to the forward fundamental solution to the Klein-Gordon equation:
\begin{align}
\L^{-1} F(t) := \int_0^t \frac{\sin((t-t')\jb{\nb})}{\jb{\nb}} F(t') dt'.
\label{lin1}
\end{align}

\noi
Namely,  $u:=\L^{-1}(F)$ is the solution to the following nonhomogeneous linear equation:
\begin{equation*}
\begin{cases}
\L u= F\\ 
(u,\partial_t u)|_{t=0}=(0,0).
\end{cases}
\end{equation*}

\noi
The most basic regularity property of $\L^{-1}$ is 
the energy estimate:
\begin{equation}\label{basic_reg}
\|\L^{-1}(F)\|_{L^\infty([-T,T];H^{s}(\T^3))}\lesssim \|F\|_{L^1([-T,T];H^{s-1}(\T^3))}.
\end{equation}

\noi
The Strichartz estimates  are important extensions of \eqref{basic_reg}
and  have been studied extensively by many
mathematicians.  See \cite{GV, LS, KeelTao}
in the context of the wave equation on $\R^d$.
Thanks to the finite speed of propagation, 
the Strichartz estimates on $\T^3$ follow from the corresponding
estimates on $\R^3$, locally in time.
We now  state 
the Strichartz estimates
which are 
relevant for the analysis in this paper. 
We refer to \cite{tz-cime} for a detailed proof.

\begin{lemma}\label{LEM:Str}

Let $0 < T \leq 1$.	
Then, the following estimate holds:
\begin{align}\label{strich}
\begin{split}
\|\L^{-1}(F)\|_{L^4([-T,T]\times\T^3)}
& +
\|\L^{-1}(F)\|_{L^\infty([-T,T];H^{\frac{1}{2}}(\T^3))}
\\
& \lesssim 
\min\Big(
\|F\|_{L^1([-T,T];H^{-\frac{1}{2}}(\T^3))}
,
\|F\|_{L^{\frac{4}{3}}([-T,T]\times\T^3)}
\Big).
\end{split}
\end{align}

\end{lemma}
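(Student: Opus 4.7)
The plan is to reduce the estimate from $\T^3$ to $\R^3$ using finite speed of propagation---noting that $\L = \dt^2-\Dl+1$ still has propagation speed one, since the mass term is a lower-order perturbation of the wave operator---and then to invoke the classical Strichartz theory for the Klein--Gordon equation on $\R^3$. The pair $(q,r)=(4,4)$ is a sharp wave-admissible pair in three dimensions with scaling regularity
\[
\gamma = 3\bigl(\tfrac12 - \tfrac14\bigr) - \tfrac14 = \tfrac12,
\]
which matches the energy-level norm $L^\infty_t H^{1/2}_x$ on the left-hand side of \eqref{strich}. The cornerstone is the homogeneous Strichartz estimate
\[
\|e^{\pm it\jb{\nb}}\phi\|_{L^4(\R;L^4(\R^3))} \lesssim \|\phi\|_{H^{1/2}(\R^3)},
\]
which follows from the dispersive bound for the half-wave Klein--Gordon propagator at each dyadic frequency scale combined with $L^2$-conservation and the Keel--Tao abstract interpolation argument (see \cite{GV, LS, KeelTao}). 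By $TT^\ast$ duality one deduces the dual Strichartz bound
\[
\Bigl\|\int_\R e^{\mp is\jb{\nb}} F(s)\,ds\Bigr\|_{H^{-1/2}(\R^3)} \lesssim \|F\|_{L^{4/3}(\R\times\R^3)},
\]
and composition yields the non-retarded bound $\bigl\|\int_\R \jb{\nb}^{-1}\sin((t-s)\jb{\nb})F(s)\,ds\bigr\|_{L^4_{t,x}} \lesssim \|F\|_{L^{4/3}_{t,x}}$.

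The four component bounds that together comprise \eqref{strich} are then assembled as follows. The estimate $\|\L^{-1}F\|_{L^\infty_t H^{1/2}_x} \lesssim \|F\|_{L^1_t H^{-1/2}_x}$ is just the energy inequality \eqref{basic_reg}. The estimate with $L^4_{t,x}$ on the left and $L^1_t H^{-1/2}_x$ on the right follows by applying Minkowski's inequality to the Duhamel formula \eqref{lin1} and invoking the homogeneous Strichartz estimate for each fixed $s$, with the factor $\jb{\nb}^{-1}$ in the kernel converting the $H^{1/2}$-norm of the data into an $H^{-1/2}$-norm. The $L^{4/3}_{t,x}\to L^4_{t,x}$ bound is obtained from the non-retarded Strichartz estimate above together with the Christ--Kiselev lemma, which applies since $\tfrac43 < 4$. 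Finally, $\|\L^{-1}F\|_{L^\infty_t H^{1/2}_x}\lesssim \|F\|_{L^{4/3}_{t,x}}$ follows by writing $\sin((t-s)\jb{\nb})/\jb{\nb} = \tfrac{1}{2i}\jb{\nb}^{-1}\bigl(e^{i(t-s)\jb{\nb}} - e^{-i(t-s)\jb{\nb}}\bigr)$, pulling out the unitary propagator $e^{\pm it\jb{\nb}}$ (which preserves $H^{1/2}$), and applying the dual Strichartz bound to the remaining integral $\int_0^t e^{\mp is\jb{\nb}} F(s)\,ds$ uniformly in $t \in [-T, T]$.

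The main technical point is the finite-speed-of-propagation reduction from $\T^3$ to $\R^3$, which requires verifying that a suitable spatial cutoff and periodization of $F$ produces a Euclidean source whose Euclidean Klein--Gordon solution coincides with the periodic solution on a time slab of thickness $T \leq 1$. This is precisely where the restriction $T \leq 1$ enters, since the injectivity radius of $\T^3 = (\R/2\pi\Z)^3$ is $\pi > 1$, guaranteeing that no secondary periodic images are reached by the light cone within the given time interval. Apart from this transfer step, all remaining ingredients---the homogeneous Strichartz estimate on $\R^3$, $TT^\ast$ duality, and the Christ--Kiselev truncation lemma---are classical, which is consistent with the paper's reference to \cite{tz-cime} for the detailed proof.
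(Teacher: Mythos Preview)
Your proposal is correct and follows essentially the same approach as the paper: the paper does not give a proof in the text but refers to \cite{tz-cime} and remarks that ``thanks to the finite speed of propagation, the Strichartz estimates on $\T^3$ follow from the corresponding estimates on $\R^3$, locally in time,'' which is precisely the reduction you carry out. Your outline in fact supplies more detail than the paper itself---the identification of $(q,r)=(4,4)$ as the sharp admissible pair at regularity $\tfrac12$, the $TT^\ast$ duality, and the Christ--Kiselev truncation---all of which are the standard ingredients in the cited reference.
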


For $T>0$, we denote by $X_T$ the closed subspace of $C([-T,T];H^\frac{1}{2}(\T^3))$ endowed with the norm:
\begin{align}
\|u\|_{X_T}=\|u\|_{L^\infty([-T,T];H^\frac{1}{2}(\T^3))}+\|u\|_{L^4([-T,T]\times \T^3)}\,.
\label{X1}
\end{align}

\noi
In the following, we use shorthand notations such as
$L^q_T L^r_x := L^q([-T, T]; L^r(\T^3))$.

\subsection{On discrete convolutions}

Next, we recall the following basic lemma on a discrete convolution.

\begin{lemma}\label{LEM:SUM}
\textup{(i)}
Let $d \geq 1$ and $\al, \be \in \R$ satisfy
\[ \al+ \be > d  \qquad \text{and}\qquad \al, \be < d.\]
\noi
Then, we have
\[
 \sum_{n = n_1 + n_2} \frac{1}{\jb{n_1}^{\al} \jb{n_2}^{\be}}
\les \jb{n}^{d -\al -\be}\]

\noi
for any $n \in \Z^d$.

\smallskip

\noi
\textup{(ii)}
Let $d \geq 1$ and $\al, \be \in \R$ satisfy $\al+ \be > d$.
\noi
Then, we have
\[
 \sum_{\substack{n = n_1 + n_2\\|n_1|\sim|n_2|}} \frac{1}{\jb{n_1}^{\al} \jb{n_2}^{\be}}
\les \jb{n}^{d - \al - \be}\]

\noi
for any $n \in \Z^d$.

\end{lemma}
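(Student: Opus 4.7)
\medskip

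\noindent
\textbf{Proof proposal for Lemma \ref{LEM:SUM}.}
The plan is to handle both parts via a dyadic decomposition of the summation variable $n_1$ (and hence $n_2 = n - n_1$), together with the elementary estimate
\begin{equation*}
\sum_{|m|\leq R} \frac{1}{\jb{m}^\gamma} \les
\begin{cases} R^{d-\gamma}, & \gamma < d,\\ \log(1+R), & \gamma = d,\\ 1, & \gamma > d,\end{cases}
\end{equation*}
applied in each region. The core idea is that exactly one of three scenarios must occur: either $n_1$ is much smaller than $n$ (so $n_2 \approx n$), or $n_2$ is much smaller than $n$ (so $n_1 \approx n$), or both $n_1$ and $n_2$ are at least of size $|n|$ (and then, by $n_1 + n_2 = n$, they are comparable to each other).

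For part (i), first I would split the sum into three regions:
\begin{equation*}
\text{(A)}\ |n_1| \leq \tfrac{1}{4}\jb{n},\qquad \text{(B)}\ |n_2| \leq \tfrac{1}{4}\jb{n},\qquad \text{(C)}\ |n_1|,|n_2|\geq \tfrac{1}{4}\jb{n}.
\end{equation*}
In region (A), the triangle inequality gives $\jb{n_2}\sim\jb{n}$, so the contribution is bounded by $\jb{n}^{-\beta}\sum_{|n_1|\leq \frac14\jb{n}}\jb{n_1}^{-\alpha}\les \jb{n}^{d-\alpha-\beta}$, where the hypothesis $\alpha<d$ is precisely what makes this partial sum polynomial in $\jb{n}$. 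Region (B) is symmetric, using $\beta<d$. For region (C), I would perform a dyadic decomposition $|n_1|\sim M$ with $M$ ranging over dyadic numbers $\geq \frac14\jb{n}$; since $|n_2|\leq |n|+|n_1|\les M$ in this regime, the contribution from each scale is $\les M^d \cdot M^{-\alpha-\beta}$, and summing the geometric series in $M$ (convergent because $\alpha+\beta>d$) yields $\jb{n}^{d-\alpha-\beta}$.

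For part (ii), the constraint $|n_1|\sim|n_2|$ together with $n_1+n_2=n$ forces $|n_1|,|n_2|\gtrsim \jb{n}$, so regions (A) and (B) are empty. The sum reduces to a dyadic sum precisely of the form treated in region (C) above, and convergence uses only $\alpha+\beta>d$, with no upper bound needed on $\alpha$ or $\beta$ individually. The main (and only real) subtlety is being careful that in region (C) the number of lattice points with $|n_1|\sim M$ is indeed $\les M^d$ uniformly, and that the geometric sum over $M\geq \jb{n}$ is dominated by its smallest term because $d-\alpha-\beta<0$; both are routine. I do not anticipate a significant obstacle, as this is a standard convolution-of-brackets estimate.
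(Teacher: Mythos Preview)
Your proposal is correct and follows the standard dyadic-decomposition route for this kind of convolution estimate. The paper itself does not give a proof of Lemma~\ref{LEM:SUM}; it simply remarks that the lemma ``follows from elementary computations'' and refers to Lemmas~4.1 and~4.2 of~\cite{MWX}, so there is no in-paper argument to compare against beyond that citation.

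One small point worth tightening in your region~(C) step: the inequality $|n_2|\les M$ alone only controls $\jb{n_2}^{-\beta}$ in the right direction when $\beta\le 0$. For $\beta>0$ you also need the matching lower bound $|n_2|\gtrsim M$, which does hold (and which you correctly noted in your opening paragraph, since in region~(C) the constraint $|n_1|,|n_2|\ge\tfrac14\jb{n}$ together with $n_1+n_2=n$ forces $|n_1|\sim|n_2|\sim M$). Making that two-sided comparability explicit in the region~(C) estimate closes the argument cleanly for all signs of $\alpha,\beta$.
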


Namely, in the resonant case (ii), we do not have the restriction $\al, \be < d$.
Lemma \ref{LEM:SUM} follows
from elementary  computations.
See, for example,  Lemmas 4.1 and 4.2 in \cite{MWX} for the proof.

\subsection{Wiener chaos estimate}

Lastly, we  recall the following Wiener chaos estimate
\cite[Theorem~I.22]{Simon}.
See also \cite[Proposition~2.4]{TTz}.

\begin{lemma}\label{LEM:hyp}
 Let $\{ g_n\}_{n \in \N }$ be 
 a sequence of  independent standard real-valued Gaussian random variables.
Given  $k \in \mathbb{N}$, 
let $\{P_j\}_{j \in \N}$ be a sequence of monomials in 
$\bar  g = \{ g_n\}_{n \in \N }$ of  degree at most $k$,
 namely, 
$P_j = P_j (\bar g)$ is of the form
$P_j =  c_j \prod_{i = 1}^{k_j} g_{n_{i}} $
with $k_j \leq k$ and $n_{1}, \dots, n_{k_j} \in \N$.
Then, for $p \geq 2$, we have
\begin{equation*}
 \bigg\|\sum_{j \in \N} P_j(\bar g) \bigg\|_{L^p(\O)} \leq (p-1)^\frac{k}{2} \bigg\|\sum_{j \in \N} P_j(\bar g) \bigg\|_{L^2(\O)}.
 \end{equation*}

\end{lemma}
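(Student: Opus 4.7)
The plan is to derive Lemma \ref{LEM:hyp} from Nelson's hypercontractivity theorem for the Ornstein-Uhlenbeck semigroup. First, I would set up the Gaussian Hilbert space framework: let $\mathcal{H}_j$ denote the $j$-th homogeneous Wiener chaos, spanned (in $L^2(\O)$) by Hermite polynomials of degree $j$ in the Gaussians $\{g_n\}_{n \in \N}$. A monomial of degree at most $k$ in $\bar g$ admits a finite Hermite expansion and hence lies in $\bigoplus_{j=0}^{k}\mathcal{H}_j$. Thus $F := \sum_j P_j(\bar g)$ belongs to this finite direct sum, and it suffices to prove the estimate
\[
\|F\|_{L^p(\O)} \leq (p-1)^{k/2}\,\|F\|_{L^2(\O)}
\]
for $F \in \bigoplus_{j=0}^{k}\mathcal{H}_j$.

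Next, I would introduce the Ornstein-Uhlenbeck semigroup $T_t = e^{tL}$, whose infinitesimal generator $L$ has $\mathcal{H}_j$ as its eigenspace with eigenvalue $-j$. Consequently, for $F = \sum_{j=0}^{k} F_j$ with $F_j \in \mathcal{H}_j$, one has $T_t F = \sum_{j=0}^k e^{-jt} F_j$, and by orthogonality of the chaoses in $L^2(\O)$,
\[
\|T_{-t} F\|_{L^2(\O)}^{2} = \sum_{j=0}^{k} e^{2jt}\|F_j\|_{L^2(\O)}^2 \leq e^{2kt}\|F\|_{L^2(\O)}^{2}.
\]
Nelson's hypercontractivity asserts that $T_t : L^2(\O) \to L^p(\O)$ is a contraction provided $e^{2t} \geq p-1$. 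I would then choose $t = \tfrac12 \log(p-1)$ (assuming $p > 1$; the case $p = 2$ is trivial), write $F = T_t(T_{-t}F)$, and combine the two displays:
\[
\|F\|_{L^p(\O)} = \|T_t(T_{-t}F)\|_{L^p(\O)} \leq \|T_{-t}F\|_{L^2(\O)} \leq e^{kt}\|F\|_{L^2(\O)} = (p-1)^{k/2}\|F\|_{L^2(\O)},
\]
which is the desired estimate.

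The only genuine input is Nelson's hypercontractive inequality, which is where all the probabilistic content resides; the rest of the argument is the algebraic observation that $T_{-t}$ is bounded on the finite-order chaos subspace with norm $e^{kt}$, together with orthogonality of distinct chaoses in $L^2(\O)$. The main delicate point — and the one I would treat carefully — is the justification that $T_{-t}$ makes sense on $\bigoplus_{j=0}^{k}\mathcal{H}_j$; this is immediate here because that subspace is invariant under $T_t$ for all $t \in \R$ and $T_t$ acts diagonally with strictly positive eigenvalues, so its restriction is invertible. Up to this point, the argument is standard (see \cite{Simon}) and requires no adaptation to the present setting; I would simply cite Nelson's theorem rather than reprove it.
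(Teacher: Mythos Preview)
Your proposal is correct and follows exactly the approach the paper indicates: the paper does not give a self-contained proof but simply states that the lemma ``is a direct corollary to the hypercontractivity of the Ornstein-Uhlenbeck semigroup due to Nelson,'' citing \cite{Nelson2} and \cite{Simon}. Your write-up spells out precisely this standard derivation (chaos decomposition, diagonal action of $T_t$, Nelson's contraction at $t=\tfrac12\log(p-1)$), so it is in full agreement with the paper.
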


This lemma is a direct corollary to the
  hypercontractivity of the Ornstein-Uhlenbeck
semigroup due to Nelson \cite{Nelson2}.
Note that in the definition of $P_j$ above, 
we may have $n_{i} = n_{\l}$ for $i \ne \l$.
Namely, we do not impose independence of the factors $g_{n_{i}}$ of $P_j$
in Lemma~\ref{LEM:hyp}.
In the following, we apply Lemma \ref{LEM:hyp}
to multilinear terms involving 
$\{ g_n \}_{n \in \Z^3}$ and  $\{ h_n \}_{n \in \Z^3}$
 in~\eqref{series}
by first expanding $g_n$ and $h_n$ into their 
real and imaginary parts.

\section{On the random free evolution and its renormalized powers}
\label{SEC:sto1}

Recall from \eqref{series_N} and \eqref{Z1} that 
$z_{1,N}(t,x,\o)$ denotes the solution to the linear Klein-Gordon equation: 
\begin{align*}
(\partial_t^2 -\Delta +1)z_{1,N}(t,x,\o)=0
\end{align*}

\noi
with the truncated random initial data:
$$
z_{1,N}(0,x,\o)= \sum_{|n|\leq N } \frac{g_n(\o)}{\jb{n}^{\al}}e^{in\cdot x}
\qquad \text{and} \qquad 
\partial_t z_{1,N}(0,x,\o) = \sum_{|n|\leq N} \frac{h_n(\o)}{\jb{n}^{\al-1}}e^{in\cdot x}, 
$$

\noi
where
$\{ g_n \}_{n \in \Z^3}$ and  $\{ h_n \}_{n \in \Z^3}$ are as in \eqref{series}.
Given $t \in \R$, define $g^t_n(\o)$ by 
\begin{align}
g^t_n(\o):=\cos(t\langle n\rangle)\, g_{n}(\o)+ \sin(t\langle n\rangle)\,h_n(\o).
\label{G1}
\end{align}

\noi
Then, we have 
\begin{align*}
\begin{split}
z_{1,N}(t,x,\o)
& =  \cos(t \jb{\nb})\Big(z_{1,N}(0,x,\o)\Big)
+ \frac{\sin(t \jb{\nb})}{\jb{\nb}}\Big(\dt z_{1,N}(0,x,\o)\Big)\\
& = \sum_{|n|\leq N } \frac{g^t_n(\o)}{\jb{n}^{\al}}e^{in\cdot x}.
\end{split}
\end{align*}

\noi
Using the definitions  of the Gaussian random variables $\{g_n\}_{n \in \Z^3}$ and $\{h_n\}_{n \in \Z^3}$, 
we see that $\{g_n^t\}_{n \in \Z^3}$ defined in \eqref{G1}
forms a family of 
independent standard   complex-valued  Gaussian random variables  
 conditioned that\footnote {In particular, 
$g_0^t$ is real-valued.}  $g_n^t=\overline{g^t_{-n}}$.
Then, the 
renormalization constant~$\s_N$ 
defined in~\eqref{sig}
is computed as
\begin{align}
\begin{split}
\s_N 
& = 
\E\Big[ \big(z_{1,N}(t,x,\o)\big)^2 \Big]
 =  \sum_{|n|\leq N}
\frac{\E\big[|g^t_n(\o)|^2\big]}{\jb{n}^{2\al}}
\\
& =   \sum_{|n|\leq N} 
\frac{1}{\jb{n}^{2\al}}
\sim \begin{cases}
\log N, & \text{for } \al = \frac 32, \rule[-3mm]{0pt}{0pt}\\
N^{3-2\al}, & \text{for } \al < \frac 32,
\end{cases}
\end{split}	
\label{CN}
\end{align}

\noi
which tends to $\infty$ as $N \to \infty$.

\begin{remark}\label{REM:sig}\rm	
From the definitions  of the Gaussian random variables $g_n$ and $h_n$
and their rotational invariance, we  see that 
\[\text{Law}(z_{1, N}(t, x)) = \text{Law}(z_{1, N}(0,0))\]

\noi
for any $(t, x) \in \R \times \T^3$.
This also explains the independence of $\s_N$ from $t$ and $x$.
\end{remark}

We now define the sequences $\{Z_{j,N}\}_{N\in \N}$, $j = 1, 2, 3$, 
by
\begin{align}
Z_{1,N}:=z_{1,N},
\quad 
Z_{2,N}:=(z_{1,N})^2-\s_{N},
\quad \text{and}\quad  Z_{3,N}:=(z_{1,N})^3- 3\s_{N} z_{1,N} .
\label{G3}
\end{align}

\noi
The main goal of this section is to prove the following proposition
on the regularity and convergence properties
of the stochastic terms $Z_{1, N}$, $Z_{2, N}$, and $Z_{3, N}$.

\begin{proposition}\label{PROP:ran1}
Let $1< \al \leq \frac 32$ and set
\begin{align}
s_1 < \al - \tfrac 32, \quad
s_2 < 2(\al - \tfrac 32), 
\quad 
\text{and}
\quad 
s_3< 3(\al - \tfrac 32).
\label{reg1}
\end{align}

\noi
Fix $j = 1, 2,$ or $3$. Then, given any $T>0$, $Z_{j, N}$ converges
almost surely 
to some limit $Z_j$ in $C([-T, T]; W^{s_j, \infty}(\T^3))$
as $N \to \infty$.
Moreover, 
given $2\leq q < \infty$, 
there exist 
positive constants $C$, $c$, $\kappa$, $\ta$ 
such that for every $T> 0$, 
 there exists a  set $\O_T$ of complemental probability smaller than $C\exp(-c/T^\kk)$ 
 with the following properties; 
 given $\eps > 0$, there exists 
$N_0  = N_0(T, \eps) \in \N$
such that 
\begin{equation}\label{conv2}
\big\|Z_{j,N}\big\|_{L^q([-T,T];W^{s_j, \infty}(\T^3))}\leq T^\ta
\end{equation}

\noi
and
\begin{equation}\label{conv1}
\big\|Z_{j,M}-Z_{j,N}\big\|_{C([-T,T];W^{s_j, \infty}(\T^3))} <\eps
\end{equation}

\noi
for any  $\o\in\O_T$
and any $M\geq N\geq N_0$,
where we allow $N = \infty$ with the understanding that $Z_{j, \infty} = Z_j$.

\end{proposition}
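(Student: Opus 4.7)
The plan is to exploit the Gaussian structure: for $j = 1, 2, 3$, the random variable $Z_{j,N}(t,x)$ is a Wick (Hermite) power of the Gaussian field $z_{1,N}$ and hence lives in the inhomogeneous Wiener chaos of order at most $j$. By the hypercontractivity stated in Lemma~\ref{LEM:hyp}, any $L^p(\O)$ moment of a quantity linear in $Z_{j,N}$ is controlled by $(p-1)^{j/2}$ times its $L^2(\O)$ moment, so throughout I would work at the level of second moments and then transfer to high moments with only polynomial loss in $p$.

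First I would establish the single-time bound. Expanding $z_{1,N}$ via the explicit formula \eqref{G1}, the Fourier coefficient $\ft{Z_{j,N}}(t,n)$ is a $j$-fold sum over $n_1 + \cdots + n_j = n$ with $|n_i| \le N$, where the Wick renormalization by $\s_N$ exactly cancels the diagonal pair-contractions arising when $n_i + n_{i'} = 0$ (this is the whole point of subtracting $\s_N$ and $3\s_N z_{1,N}$ in \eqref{G3}). By Wick's formula (orthogonality of distinct Wiener chaoses), the second moment reduces to
\begin{align*}
\E\big[|\ft{Z_{j,N}}(t,n)|^2\big] \les \sum_{\substack{n_1+\cdots+n_j = n \\ |n_i| \le N}} \prod_{i=1}^j \frac{1}{\jb{n_i}^{2\al}} \les \jb{n}^{3(j-1) - 2j\al},
\end{align*}
the last bound following from an iterated application of Lemma~\ref{LEM:SUM} in the range $1 < \al \le \tfrac 32$. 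One checks that summing $\jb{n}^{2(s_j + \eps)}$ against this bound is finite precisely when $s_j + \eps < j(\al - \tfrac 32)$, matching the thresholds in \eqref{reg1}. Combined with the Sobolev embedding $W^{s_j + \eps, p} \hookrightarrow W^{s_j, \infty}$ for $p$ large, Minkowski's inequality, and hypercontractivity, this yields uniform-in-$N$ bounds on $\E\big[\|Z_{j,N}(t)\|_{W^{s_j, \infty}}^p\big]^{1/p} \les p^{j/2}$.

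To upgrade to space-time regularity I would compute $\E\big[|\ft{Z_{j,N}}(t,n) - \ft{Z_{j,N}}(t',n)|^2\big]$, using elementary trigonometric identities on $g_n^t - g_n^{t'}$ to produce a factor $|t - t'|^{2\delta}\jb{n}^{2\delta}$ times the same convolution sum as above, at the cost of $\delta$ spatial derivatives. Hypercontractivity then yields $p$-th moment bounds polynomial in $p$ and H\"older in $|t - t'|$, so Kolmogorov's continuity criterion (applied with $p$ large) produces an almost sure H\"older-continuous representative in $C([-T, T]; W^{s_j, \infty}(\T^3))$. Re-running the identical computation on $Z_{j, M} - Z_{j, N}$: the Fourier support of the difference forces $|n_i| > N$ for at least one $i$, which (after sacrificing a tiny amount of regularity) produces an extra polynomial gain $N^{-\g}$, $\g > 0$. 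This yields the Cauchy property \eqref{conv1} in $L^p(\O)$ and, via Borel--Cantelli along a dyadic subsequence, almost surely.

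Finally, to obtain the set $\O_T$ with the Gaussian-type tail, I would fix $q < \infty$ (allowed since \eqref{conv2} is stated for finite $q$) and apply Minkowski to get $\E\big[\|Z_{j,N}\|_{L^q_T W^{s_j,\infty}_x}^p\big]^{1/p} \les p^{j/2} T^{1/q}$ uniformly in $N$. Chebyshev's inequality gives $P\big(\|Z_{j,N}\|_{L^q_T W^{s_j,\infty}_x} > T^\theta\big) \le (C p^{j/2} T^{1/q - \theta})^p$ for any $\theta < 1/q$, and optimizing $p \sim T^{-2(1/q - \theta)/j}$ produces the tail $C\exp(-c/T^\kk)$ with $\kk = 2(1/q - \theta)/j > 0$. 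Intersecting the resulting good sets over $j = 1, 2, 3$ and a countable sequence of $\eps$'s (using the Cauchy bound with $N^{-\g}$ gain) yields $\O_T$. The main technical obstacle is the combinatorial bookkeeping of the Wick renormalization for $Z_{3, N}$: one has to verify that every diagonal pair-contraction among the three copies of $z_{1, N}$ is cancelled by the $-3\s_N z_{1, N}$ subtraction, leaving only the genuinely third-chaos component whose covariance decay drives the threshold $s_3 < 3(\al - \tfrac 32)$. Everything else is mechanical once this identification is in place.
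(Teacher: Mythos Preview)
Your proposal is correct and follows essentially the same approach as the paper: hypercontractivity reduces matters to second moments, these are controlled by the discrete convolution Lemma~\ref{LEM:SUM}, time-H\"older bounds (packaged in the paper as Lemma~\ref{LEM:hhhlll}) feed a Kolmogorov-type argument, and Chebyshev with optimization in $p$ yields the exponential tail. The paper organizes the second-moment computation via explicit diagonal/off-diagonal decompositions of $Z_{2,N}$ and $Z_{3,N}$ rather than your appeal to Wick's formula, and invokes Egoroff's theorem (rather than a direct countable intersection) to pass from almost sure convergence to the uniform Cauchy estimate~\eqref{conv1} on $\O_T$, but these are cosmetic differences.
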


We split  the proof of this proposition into several parts.
We first present preliminary  lemmas and then prove Proposition \ref{PROP:ran1}
at the end of this section.

\begin{lemma}\label{LEM:nel}
Let $1 < \al \leq \frac 32$ and $s_j$, $j = 1, 2, 3$, satisfy \eqref{reg1}.
Then, 
given  $2\leq q < \infty$ and $2 \leq r\leq \infty$, 
there exists $\delta>0$ such that 
the following estimates hold for   $j=1,2,3$:
\begin{align}
\big\|\jb{\nb}^{s_j}Z_{j,N}\big\|_{L^p(\O; L^q([-T,T];L^r(\T^3)))}
& \leq C  T^{\frac{1}{q}} p^{\frac{j}{2}}, 
\label{hyper1}
\\
\Big\|\jb{\nb}^{s_j}\big(Z_{j,M}-Z_{j,N}\big)\Big\|_{L^p(\O; L^q([-T,T];L^r(\T^3)))}
& \leq C N^{-\delta} T^{\frac{1}{q}} p^{\frac{j}{2}}  ,
\label{hyper2}
\end{align}

\noi
for   any $ M\geq N \geq 1$, $T > 0$, and any finite $p \geq 1$, 
where the constant $C$ is independent of $M,N,T,p$.
\end{lemma}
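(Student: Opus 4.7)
The plan is to reduce the spacetime-randomized bounds to a pointwise-in-$(t,x)$ computation of the $L^2(\O)$-norm of $\jb{\nb}^{s_j}Z_{j,N}(t,x)$, using Wiener chaos to convert this $L^2(\O)$-bound into an $L^p(\O)$-bound at the expense of a factor of $p^{j/2}$. First, by using Sobolev embedding in $x$ to trade $s_j$ for $s_j+\eps$ when $r=\infty$ (which is permissible because the conditions \eqref{reg1} are strict inequalities), we reduce to the case $r<\infty$. Then for $p\geq \max(q,r)$, Minkowski's inequality yields
\[
\big\|\jb{\nb}^{s_j}Z_{j,N}\big\|_{L^p(\O;L^q_TL^r_x)}
\le \big\|\jb{\nb}^{s_j}Z_{j,N}\big\|_{L^q_TL^r_xL^p(\O)},
\]
and the case $p<\max(q,r)$ is handled by monotonicity of $L^p(\O)$-norms. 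Since $Z_{j,N}$ lies in the $j$-th homogeneous Wiener chaos associated with $\{g_n,h_n\}$, Lemma \ref{LEM:hyp} applied pointwise in $(t,x)$ gives $\|\cdot\|_{L^p(\O)}\le C p^{j/2}\|\cdot\|_{L^2(\O)}$, so it suffices to establish a uniform (in $N$, $t$, $x$) bound on $\|\jb{\nb}^{s_j}Z_{j,N}(t,x)\|_{L^2(\O)}$; the $T^{1/q}$ factor then comes out of the time integration on $[-T,T]$ and the $L^r_x$-integration on the torus produces only a harmless constant.

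The pointwise $L^2(\O)$-computation proceeds by direct Fourier expansion. Writing $z_{1,N}=\sum_{|n|\le N}\jb{n}^{-\al}g_n^t e^{in\cdot x}$ and expanding $Z_{j,N}$ via \eqref{G3}, the subtraction of $\s_N$ (for $j=2$) and of $3\s_N z_{1,N}$ (for $j=3$) precisely kills the ``diagonal'' pairings $n_k+n_\l=0$ in the Gaussian second moment—that is, $Z_{j,N}$ is the Wick-ordered $j$-fold product of $z_{1,N}$. Using orthogonality of different Wick monomials we obtain
\[
\big\|\jb{\nb}^{s_j}Z_{j,N}(t,x)\big\|_{L^2(\O)}^2
\sim \sum_{n\in\Z^3}\jb{n}^{2s_j}\!\!\sum_{\substack{n_1+\cdots+n_j=n\\|n_k|\le N}}\prod_{k=1}^j\frac{1}{\jb{n_k}^{2\al}}.
\]
Iterated application of Lemma \ref{LEM:SUM}(i) (with a harmless logarithmic loss at the borderline $\al=\tfrac32$, absorbed into the strict inequality for $s_j$) bounds the inner sum by $\jb{n}^{3(j-1)-2j\al}$, and the outer sum converges whenever $2s_j+3(j-1)-2j\al+3<0$, i.e., precisely when $s_j<j(\al-\tfrac32)$. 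This establishes \eqref{hyper1}.

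For the difference estimate \eqref{hyper2}, the same Fourier expansion applies to $Z_{j,M}-Z_{j,N}$, but the frequency support is now restricted so that at least one index $n_k$ satisfies $|n_k|>N$. Using the elementary bound $\mathbf{1}_{|n_k|>N}\le N^{-2\dl}\jb{n_k}^{2\dl}$ for small $\dl>0$, one factor in the product becomes $\jb{n_k}^{-2(\al-\dl)}$; choosing $\dl$ small enough that $s_j+j\dl<j(\al-\tfrac32)$ still holds, the same convolution analysis yields the desired bound with the extra prefactor $N^{-\dl}$. The main technical work is the iterated convolution bookkeeping for $j=3$ and the careful handling of the endpoint $\al=\tfrac32$, where the loss in Lemma \ref{LEM:SUM} costs a factor of $\log\jb{n}$; this log is absorbed by shrinking $s_j$ slightly below $j(\al-\tfrac32)$, using that the hypotheses in \eqref{reg1} are strict.
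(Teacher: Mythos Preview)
Your proof is correct and follows the same overall strategy as the paper: reduce $r=\infty$ to finite $r$ via Sobolev embedding, use Minkowski and the Wiener chaos estimate (Lemma~\ref{LEM:hyp}) to reduce to a pointwise $L^2(\O)$ bound, then control that bound by the discrete convolution estimates of Lemma~\ref{LEM:SUM}. The one notable difference is in how the $L^2(\O)$ norm is computed: you invoke the Wick isometry directly, using that $Z_{j,N}=H_j(z_{1,N};\s_N)$ lies in the $j$-th homogeneous chaos to write down the sum $\sum_n\jb{n}^{2s_j}\sum_{n_1+\cdots+n_j=n}\prod_k\jb{n_k}^{-2\al}$ in one stroke, whereas the paper unpacks this by hand, splitting $Z_{2,N}$ and $Z_{3,N}$ into off-diagonal and diagonal pieces (for $j=3$ this requires an inclusion--exclusion step separating the fully non-pairing part $\III_N$ from a remainder $\IV_N$) and bounding each separately. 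Your presentation is more streamlined; the paper's is more self-contained for a reader unfamiliar with Wick calculus. Both handle the endpoint $\al=\tfrac32$ the same way, by trading a small amount of the $s_j$-room to absorb the logarithm.
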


\begin{proof} 
In the following, we only prove the difference estimate \eqref{hyper2}
since the first estimate~\eqref{hyper1} follows
in a similar manner.

When $r = \infty$, 
we can apply the Sobolev embedding theorem
and reduce the $r = \infty$ case to the case of large but finite $r$ 
at the expense of a slight loss of spatial derivative.
This, however, does not cause an issue
since  the conditions on $s_j$ are open.
Hence, we assume $r < \infty$ in the following.

Let  $p\geq \max(q,r)$.
Since
$$
\jb{\nb}^{s_1}Z_{1,N}
=
\sum_{|n|\leq N } \frac{g^t_n(\o)}{\jb{n}^{\al-s_1}}e^{in\cdot x}, 
$$

\noi
we see that 
$ \jb{\nb}^{s_1}\big(Z_{1,N}-Z_{1,M}\big)(t, x)$ is
a mean-zero Gaussian random variable for fixed $t$ and $x$.
In particular, there exists a universal constant $C>0$ such that 
\begin{align}
\Big\| \jb{\nb}^{s_1}\big(Z_{1,M}-Z_{1,N}\big)(t, x)
\Big\|_{L^p(\O)}
\leq C p^\frac{1}{2}\Big\| \jb{\nb}^{s_1}\big(Z_{1,M}-Z_{1,N}\big)(t, x)
\Big\|_{L^2(\O)}.
\label{G4}
\end{align}

\noi
Then, 
it follows from 
 Minkowski's integral inequality and \eqref{G4}
that 
\begin{align}
\begin{split}
\bigg\|\Big\|\jb{\nb}^{s_1}
 \big(Z_{1,M}- & Z_{1,N}\big)\Big\|_{L^q_TL^r_x}\bigg\|_{L^p(\O)}
  \leq
\bigg\|\Big\| \jb{\nb}^{s_1}\big(Z_{1,M}-Z_{1,N}\big)(t, x)
\Big\|_{L^p(\O)}
\bigg\|_{L^q_TL^r_x}\\
& 
\leq C T^\frac{1}{q}p^\frac{1}{2} \bigg(\sum_{N<|n|\leq M } \frac{1}{\jb{n}^{2(\al-s_1)}}\bigg)^\frac{1}{2}
\leq C N^{ - \dl }T^{\frac{1}{q}} p^\frac{1}{2}
\end{split}
\label{G4a}
\end{align}

\noi
for some $\dl>0$ under the regularity assumption \eqref{reg1}.
This proves \eqref{hyper2} for $j = 1$.

Next, we  turn to  the $j=2$ case.
Let us  write 
\begin{align}
\jb{\nb}^{s_2}Z_{2,N}= \I_N+\II_N,
\label{G5}
\end{align}
where
$$
\I_N (t, x):=
\sum_{\substack{|n_1|\leq N,  |n_2|\leq N \\ n_1\neq -n_2}}
 \frac{g^t_{n_1}(\o)g^t_{n_2}(\o) } {\jb {n_1+n_2}^{-s_2} \jb{n_1}^{\al}\jb{n_2}^{\al}} e^{i(n_1+n_2)\cdot x}
$$
and 
$$
\II_N(t, x):=
\sum_{|n|\leq N}
\langle n\rangle^{-2\al}
\Big(
|g^t_n(\o)|^2 - \E\big[|g^t_n|^2\big]
\Big)
= \sum_{|n|\leq N}
\langle n\rangle^{-2\al}
\Big(
|g^t_n(\o)|^2 -1
\Big).
$$

\noi
Fix $(t, x) \in \R \times \T^3$.
By using the independence of $\{g_n^t\}_{n \in \Ld}$  with $\Ld$ as in \eqref{index}
and Lemma~\ref{LEM:SUM}, we have\footnote {Strictly speaking, 
in applying Lemma \ref{LEM:SUM} when $\al = \frac 32$, we need to replace $2\al$ in the exponent
by $2\al - \eps$ for some small $\eps>0$.
This, however, does not affect the outcome since the condition
on $s_2$ is open.
The same comment applies to~\eqref{G8b} below.}
\begin{align}
\begin{split}
\|\I_M(t, x)-\I_N(t, x)\|^2_{L^2(\O)}
& \lesssim 
\sum_{
\substack
{
|n_1|\leq M,  |n_2|\leq M 
\\
\max(|n_1|,|n_2|)>N
}}
 \frac{1}
 {
 \jb {n_1+n_2}^{-2s_2}
 \jb{n_1}^{2\al}
  \jb{n_2}^{2\al}
 }\\
& \leq CN^{-\delta},
\end{split}
\label{G6}
\end{align}

\noi
for some $\delta>0$,  with $C$ independent of $M\geq  N\geq 1$ and $(t, x)\in \R\times \T^3$,  
provided that $4\al- 2s_2>6$.
Namely, 
$s_2 < 2(\al - \frac 32)$.
Similarly, by using the independence of $\big\{|g^t_n(\o)|^2 -1\big\}_{n \in \Ld}$, 
we have
\begin{align}
\|\II_M(t, x)-\II_N(t, x)\|^2_{L^2(\O)}\lesssim 
\sum_{N<|n|\leq M}
\frac{1}{\jb{n}^{4\al}}
\leq CN^{-\delta}
\label{G7}
\end{align}

\noi
for some $\delta>0$, with $C$ independent of $M\geq  N\geq 1$ and $(t, x) \in \R\times \T^3$, provided $4\al>3$, 
which is guaranteed by the assumption $\al > 1$.
Therefore, from \eqref{G5}, \eqref{G6}, and \eqref{G7}, 
we obtain 
$$
\Big\|\jb{\nb}^{s_2}\big(Z_{2,M}-Z_{2,N}\big)(t, x)\Big\|_{L^2(\O)}\leq C N^{-\delta}
$$

\noi
for some $\delta>0$,  with a constant $C$ independent of $M\geq  N \geq 1$ and $(t, x)\in \R\times \T^3$.  
By the Wiener chaos estimate (Lemma \ref{LEM:hyp}), 
we then obtain 
\begin{align}
\Big\|
\jb{\nb}^{s_2}\big(Z_{2,M}-Z_{2,N}\big)(t, x)\Big\|_{L^p(\O)}
\leq C N^{-\delta} p
\label{G8}
\end{align}

\noi
for any finite $p \geq 2$.
Then, arguing as in \eqref{G4a} with Minkowski's integral inequality, 
the estimate  \eqref{hyper2} for $j=2$ follows from \eqref{G8}.

Let us finally turn to \eqref{hyper2} for $j=3$.   Write 
$$
\jb{\nb}^{s_3}Z_{3,N}=\III_N+\IV_N,
$$

\noi
where 
$$
\III_N(t, x):=
\sum_{\substack{|n_j|\leq N,\, j = 1, 2, 3\\
(n_1+n_2)(n_1+n_3)(n_2+n_3)\neq 0}}
 \frac{g^t_{n_1}(\o)g^t_{n_2}(\o)  g^t_{n_3}(\o) }
 {
 \jb {n_1+n_2+n_3}^{-s_3}
 \jb{n_1}^{\al}
  \jb{n_2}^{\al}
  \jb{n_3}^{\al}
 }
 e^{i(n_1+n_2+n_3)\cdot x}
$$

\noi
and by the inclusion-exclusion principle
\begin{align*}
\IV_N(t, x)
:\! & =
3 \sum_{|n|\leq N} 
\frac{|g^t_n(\o)|^2 -\E\big[|g^t_n|^2\big]}
{\langle n\rangle^{2\al}}
\sum_{|m|\leq N}
\frac{g_m^t(\o)}
 {\langle m \rangle^{\al - s_3}}
e^{im\cdot x}\\
& \hphantom{X}
- 3 
 \sum_{|n|\leq N} 
\frac{|g^t_n(\o)|^2g_n^t(\o)}
{\langle n\rangle^{3\al-s_3}}
e^{in\cdot x}
+ |g_0^t(\o)|^2 g_0^t(\o).
\end{align*}

\noi
Proceeding as above with Lemma \ref{LEM:SUM}, we have 
\begin{align}
\begin{split}
\|\III_M(t, x)& -\III_N(t, x)\|^2_{L^2(\O)}\\
& \lesssim 
\sum_{
\substack
{
|n_j|\leq M, \, j = 1, 2, 3\\
\max(|n_1|,|n_2|, |n_3|)>N
}}
 \frac{1}
 {
 \jb {n_1+n_2+n_3}^{-2s_3}
 \jb{n_1}^{2\al}
  \jb{n_2}^{2\al}
  \jb{n_3}^{2\al}
 }
\leq CN^{-\delta}
\end{split}
\label{G8b}
\end{align}

\noi
for some $\delta>0$, with $C$ independent of $M\geq N\geq 1$ and $(t, x)\in \R \times \T^3$, 
provided $6\al-2s_3>9$ and $\al > 1$. 
See Remark \ref{REM:cube}.
Namely, $s_3<3(\al - \frac{3}{2})$ and~$\al > 1$.
Then, by  the Wiener chaos estimate (Lemma~\ref{LEM:hyp}), we obtain
\begin{align}
\|\III_M(t, x)-\III_N(t, x)\|_{L^p(\O)}\leq C N^{-\delta} p^{\frac{3}{2}}
\label{G8a}
\end{align}

\noi
for any finite $p \geq 2$.

Let us now estimate $\IV_N$. 
By  
Lemma \ref{LEM:hyp} and 
H\"older's inequality, we have
\begin{align*}
\big\|\IV_N(t, x)\big\|_{L^p(\O)}
& \les  
p^{\frac{3}{2}}
 \sum_{|n|\leq N} 
\frac{1}
{\langle n\rangle^{3\al-s_3}}  \\
& \hphantom{X}
+
\bigg \|
\sum_{|n|\leq N} 
\frac{|g^t_n(\o)|^2 -\E\big[|g^t_n|^2\big]}
{\langle n\rangle^{2\al}}
\bigg\|_{L^{2p}(\O)}
\bigg\|
\sum_{|m|\leq N}
\frac{g_m^t(\o)}
 {\langle m \rangle^{\al-s_3}}
e^{im\cdot x}
\bigg\|_{L^{2p}(\O)}.
\end{align*}

\noi
The first sum on the right-hand side is convergent if $3\al - s_3 > 3$.
Note that this condition is guaranteed under \eqref{reg1}.
Both factors in the second term on the right hand-side 
can 
be treated by the arguments presented above.
We therefore have the bounds:
\begin{align}
\bigg \|
\sum_{|n|\leq N} 
\frac{|g^t_n(\o)|^2 -\E\big[|g^t_n|^2\big]}
{\langle n\rangle^{2\al}}
\bigg\|_{L^{2p}(\O)}
\leq C p
\label{G9}
\end{align}

\noi
and 
\begin{align}
\bigg\|
\sum_{|m|\leq N}
\frac{g_m^t(\o)}
 {\langle m \rangle^{\al-s_3}}
e^{im\cdot x}
\bigg\|_{L^{2p}(\O)}
\leq C p^{\frac{1}{2}}
\label{G10}
\end{align}

\noi
\noi
for any finite $p \geq 2$, 
provided that $4\al > 3$ for \eqref{G9} and 
$2\al - 2s_3 > 3$ for \eqref{G10}.
Note that the second condition is guaranteed under \eqref{reg1}
with $\al \leq \frac 32$.
Then, by applying the Wiener chaos estimate (Lemma \ref{LEM:hyp}), 
this leads to
$$
\|\IV_N(t, x)\|_{L^p(\O)}\leq C p^{\frac{3}{2}}.
$$

\noi
A similar argument yields
\begin{align}
\|\IV_M(t, x)-\IV_{N}(t, x)\|_{L^p(\O)}\leq C N^{-\delta} p^{\frac{3}{2}} 
\label{G11}
\end{align}

\noi
for some  $\delta > 0$. 
Then, arguing as in \eqref{G4a} with Minkowski's integral inequality, 
the estimate~\eqref{hyper2} for $j=3$ follows from \eqref{G8a} and \eqref{G11}.
This completes the proof of Lemma~\ref{LEM:nel}.
\end{proof}

Thanks to Lemma~\ref{LEM:nel}, we already know that    the sequences 
$\{Z_{j,N}\}_{N\in \N}$,  $j=1,2,3$, converge in $L^p(\O;L^q([-T,T];W^{s_j, r}(\T^3)))$ to some limits $Z_{j}$.
It turns out that  the quantitative 
properties \eqref{hyper2} of the convergence allow us to upgrade these convergences to almost sure convergences. 
See the proof of Proposition \ref{PROP:ran1} below.
In order to obtain convergence in $C([-T, T]; W^{s_j, r}(\T^3))$, 
however,  we need to establish a difference estimate at two different times.
The following lemma will be useful in this context.

\begin{lemma}\label{LEM:hhhlll}
Let $k\geq 1$ be an integer. Then, we can write 
\begin{equation}\label{developp}
\prod_{j=1}^k g^t_{n_j}-\prod_{j=1}^k g^\tau_{n_j}
=
\sum _{\l} c_{\l}(t,\tau,n_1,\cdots,n_k)\prod_{j=1}^k g^*_{n_j},
\end{equation}

\noi
where $g^*_{n_j}$ is either $g_{n_j}$ or $h_{n_j}$ and the sum in $\l$ runs over all such possibilities.
Furthermore, given any  $\delta>0$, there exists $C_\delta>0$ such that 
\begin{align}
|c_{\l}(t,\tau,n_1,\cdots,n_k)|\leq C_{\delta}|t-\tau|^\delta \sum_{j=1}^k \langle n_j\rangle^\delta\,.
\label{typ0}
\end{align}
\end{lemma}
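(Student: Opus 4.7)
The plan is to exploit the explicit linear structure
\begin{align*}
g^t_{n_j}(\o) = \cos(t\jb{n_j})\, g_{n_j}(\o) + \sin(t\jb{n_j})\, h_{n_j}(\o)
\end{align*}
from \eqref{G1}, which expresses each $g^t_{n_j}$ as a deterministic linear combination of the same two underlying Gaussians $g_{n_j}$ and $h_{n_j}$. My first step is to plug this into the product $\prod_{j=1}^k g^t_{n_j}$ and fully distribute; this produces a sum indexed by choices $\l = (\l_1,\dots,\l_k) \in \{\cos,\sin\}^k$ of the form
\begin{align*}
\prod_{j=1}^k g^t_{n_j} = \sum_{\l} \Big(\prod_{j=1}^k a_{\l_j}(t\jb{n_j})\Big) \prod_{j=1}^k g^{*,\l_j}_{n_j},
\end{align*}
where $a_{\cos} = \cos$, $a_{\sin} = \sin$, and $g^{*,\l_j}_{n_j}$ equals $g_{n_j}$ if $\l_j = \cos$ and $h_{n_j}$ if $\l_j = \sin$. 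The point is that the random part $\prod_j g^{*,\l_j}_{n_j}$ depends only on $\l$, not on $t$; applying the same decomposition at time $\tau$ and subtracting therefore yields \eqref{developp} with
\begin{align*}
c_{\l}(t,\tau,n_1,\dots,n_k) = \prod_{j=1}^k a_{\l_j}(t\jb{n_j}) - \prod_{j=1}^k a_{\l_j}(\tau\jb{n_j}).
\end{align*}

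The second step is the quantitative bound \eqref{typ0}. Since each $a_{\l_j}$ is either sine or cosine, all factors are bounded in absolute value by $1$. A standard telescoping identity then gives
\begin{align*}
|c_{\l}(t,\tau,n_1,\dots,n_k)| \leq \sum_{i=1}^k |a_{\l_i}(t\jb{n_i}) - a_{\l_i}(\tau\jb{n_i})|.
\end{align*}
For each $i$, I use the elementary interpolation $\min(2,x) \leq 2^{1-\delta} x^{\delta}$ valid for $x \geq 0$ and any $\delta \in [0,1]$, together with the mean value theorem bound $|a_{\l_i}(t\jb{n_i}) - a_{\l_i}(\tau\jb{n_i})| \leq |t-\tau|\jb{n_i}$ and the trivial bound $\leq 2$, to obtain
\begin{align*}
|a_{\l_i}(t\jb{n_i}) - a_{\l_i}(\tau\jb{n_i})| \leq C_\delta |t-\tau|^\delta \jb{n_i}^\delta.
\end{align*}
Summing over $i$ yields \eqref{typ0}.

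This is an entirely deterministic manipulation; there is no real obstacle beyond keeping the bookkeeping clean. The only mildly substantive ingredient is the interpolation inequality that converts the uniform bound and the Lipschitz bound on $\cos,\sin$ into a H\"older-type estimate with an arbitrarily small loss $\jb{n_i}^\delta$. This fractional loss is exactly what is needed later to trade spatial regularity for temporal H\"older regularity via a Kolmogorov-type argument, so it must be made precise, but no further idea is required.
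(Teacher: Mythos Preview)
Your proof is correct and follows essentially the same approach as the paper: expand each $g^t_{n_j}$ via \eqref{G1}, identify the coefficient $c_\l$ as a difference of products of sines and cosines, and then combine the trivial bound $\leq 2$ with the mean value theorem bound to obtain the H\"older-type estimate \eqref{typ0}. The only cosmetic difference is that you telescope the product first and then interpolate each factor, whereas the paper interpolates directly between the bounds $\big|\prod_j H_j(t\jb{n_j})-\prod_j H_j(\tau\jb{n_j})\big|\lesssim |t-\tau|\sum_j\jb{n_j}$ and $\leq 2$; both routes yield the same conclusion.
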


\begin{proof}
From the definition \eqref{G1}
of $g_n^t$, a typical term in the sum defining the right-hand side of \eqref{developp} is given by
\begin{equation}\label{typ}
\Big(\prod_{j=1}^kH_j(t\jb{n_j} ) -\prod_{j=1}^kH_j(\tau \jb{n_j} )\Big)\prod_{j=1}^k g^*_{n_j},
\end{equation}

\noi
where $H_j(t\jb{n_j}) = \cos(t\jb{n_j})$ 
(with $g^*_{n_j} = g_{n_j}$) or 
$\sin(t\jb{n_j})$ (with $g^*_{n_j} = h_{n_j}$).
By the mean value theorem and the boundedness of $H_j$, 
we have 
\begin{align}
\bigg|
\prod_{j=1}^kH_j(t\jb{n_j} ) -\prod_{j=1}^kH_j(\tau\jb{n_j} )
\bigg|
\lesssim 
|t-\tau|\sum_{j=1}^k \jb{n_j}.
\label{typ1}
\end{align}

\noi
We also have the trivial bound 
\begin{align}
\bigg|
\prod_{j=1}^kH_j(t\jb{n_j} ) -\prod_{j=1}^kH_j(\tau\jb{n_j} )
\bigg|\leq 2.
\label{typ2}
\end{align}

\noi
By interpolating \eqref{typ1} and \eqref{typ2}, 
we conclude that  \eqref{typ} satisfies the claimed bound~\eqref{typ0}. 
This completes the proof of Lemma~\ref{LEM:hhhlll}.
\end{proof}

In view of  Lemma~\ref{LEM:hhhlll}, 
a  slight modification of the proof of Lemma~\ref{LEM:nel} yields the following statement. 

\begin{lemma}\label{LEM:nel_pak}
Let $1 < \al \leq \frac 32$ and $s_j$ satisfies \eqref{reg1},  $j = 1, 2, 3$.
Then, 
given  $2 \leq r\leq \infty$, 
there exists $\delta>0$ such that 
the following estimates hold for   $j=1,2,3$:
\begin{align}
\big\|\jb{\nb}^{s_j}\delta_h Z_{j,N}(t)\big\|_{L^p(\O; L^r(\T^3))}
& \leq 
C  p^{\frac{j}{2}}|h|^{\delta}  ,
\label{hyper3}
\\
\Big\|\jb{\nb}^{s_j}\big(\delta_h Z_{j,M}(t)-\delta_{h}Z_{j,N}(t)\big)\Big\|_{L^p(\O; L^r(\T^3))}
& \leq 
C N^{-\delta} p^{\frac{j}{2}}|h|^{\delta}  ,
\label{hyper4}
\end{align}

\noi
for any $M\geq N \geq 1$,   $t\in [-T,T]$, and $h \in \R$ 
such that $t+ h \in [-T, T]$, 
where the constant $C$ is independent of $M,N,T,p, t$, 
and $h$.
Here, $\dl_h$ denotes the difference operator defined by 
\[\delta_h Z_{j,N}(t)=Z_{j,N}(t+h)-Z_{j,N}(t).\]

\end{lemma}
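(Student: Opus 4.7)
The plan is to closely parallel the proof of Lemma \ref{LEM:nel}, with each Gaussian monomial $\prod_{i=1}^{j} g^{t}_{n_i}$ replaced by its time-difference $\prod_{i=1}^{j} g^{t+h}_{n_i} - \prod_{i=1}^{j} g^{t}_{n_i}$. The new ingredient is Lemma \ref{LEM:hhhlll}, which writes such a time-difference as a finite sum of monomials in $\{g_{n_i}, h_{n_i}\}$ with coefficients bounded by $C_\dl |h|^\dl \sum_i \jb{n_i}^\dl$ for any prescribed $\dl > 0$.

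First, I would expand $\dl_h Z_{j,N}(t)$ in its Fourier--Wiener-chaos decomposition exactly as in Lemma \ref{LEM:nel}. For $j = 1$, this is the Gaussian sum as in \eqref{G4}; for $j = 2$, it splits into the non-resonant bilinear piece $\I_N$ and the diagonal piece $\II_N$; for $j = 3$, into the fully off-diagonal piece $\III_N$ and the partially resonant remainder $\IV_N$. Here it is crucial that the renormalization constant $\s_N$ is independent of $t$ (see Remark \ref{REM:sig}), so it drops out when forming $\dl_h Z_{2,N}$ and $\dl_h Z_{3,N}$, up to a contribution of the form $-3\s_N\, \dl_h z_{1,N}$ in the $j=3$ case, which is absorbed by the $j=1$ estimate.

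Second, for each term in the expansion I would apply Lemma \ref{LEM:hhhlll} to convert the time-difference into coefficients of size $|h|^\dl \sum_i \jb{n_i}^\dl$ multiplying a new Gaussian monomial of the same order. The extra derivative loss $\dl$ is then absorbed by replacing $\al$ by $\al - \dl$ in the Fourier weights $\jb{n_i}^\al$ appearing in \eqref{G3}. Since the regularity conditions \eqref{reg1} are strict, one may choose $\dl > 0$ small enough that all the discrete convolution sums handled in Lemma \ref{LEM:nel} via Lemma \ref{LEM:SUM} remain convergent with the modified exponent, yielding the $L^2(\O)$-estimate
\[
\big\| \jb{\nb}^{s_j}\dl_h Z_{j,N}(t,x) \big\|_{L^2(\O)} \les |h|^\dl
\]
uniformly in $(t,x) \in \R \times \T^3$ and $N \geq 1$. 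The truncation difference estimate for $\dl_h Z_{j,M} - \dl_h Z_{j,N}$ is obtained in the same manner after restricting one of the Fourier variables to $\max_i |n_i| > N$, which produces the decay factor $N^{-\dl}$ exactly as in \eqref{G6}, \eqref{G7}, and \eqref{G8b}.

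Third, the upgrade from $L^2(\O)$ to $L^p(\O)$ follows from the Wiener chaos estimate (Lemma \ref{LEM:hyp}), producing the factor $p^{j/2}$; Minkowski's integral inequality then exchanges the order of the $L^p(\O)$ and $L^r_x$ norms, yielding \eqref{hyper3} and \eqref{hyper4}. For $r = \infty$, one reduces to a large but finite $r$ via Sobolev embedding at the cost of an arbitrarily small further loss of spatial derivative. The main obstacle—and essentially the only point that requires attention—is the bookkeeping of the derivative losses: the loss $\dl$ from Lemma \ref{LEM:hhhlll}, the arbitrarily small loss from the $r = \infty$ reduction, and the margin needed to extract the truncation decay $N^{-\dl}$ must all simultaneously fit inside the open range \eqref{reg1}. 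This is achieved by choosing a single sufficiently small $\dl > 0$ uniformly across all the steps.
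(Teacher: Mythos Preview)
Your overall strategy matches the paper's: rerun the computations of Lemma~\ref{LEM:nel} with Lemma~\ref{LEM:hhhlll} supplying the factor $|h|^\dl$, at the price of a small loss in the exponents $\jb{n_i}^{\al}$ which the open conditions~\eqref{reg1} absorb; then upgrade via Lemma~\ref{LEM:hyp} and Minkowski exactly as you describe.

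There is, however, one genuine slip. The claim that the piece $-3\s_N\,\dl_h z_{1,N}$ is ``absorbed by the $j=1$ estimate'' is wrong: $\s_N\to\infty$ while the $j=1$ bound carries no compensating decay in $N$, so separating this term destroys uniformity in $N$. You must keep the renormalized structure intact through the time-difference. Concretely, for the diagonal pieces $\II_N$ and the first sum in $\IV_N$ one has to control $\dl_h\big(|g_n^t|^2-\E[|g_n^t|^2]\big)$, and Lemma~\ref{LEM:hhhlll} applied to the bare product $g_n^t g_{-n}^t$ outputs monomials such as $|g_n|^2$ and $|h_n|^2$ that are not individually mean-zero; the independence argument behind \eqref{G7} and \eqref{G9} therefore does not apply term by term in $\l$. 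The paper closes this gap with the explicit identity
\[
\big(|g_n^t|^2-1\big)-\big(|g_n^\tau|^2-1\big)
= a_n\big(|g_n|^2-1\big)+b_n\big(|h_n|^2-1\big)+2c_n\,\Re(g_n\cj{h_n}),
\]
where $a_n=\cos^2(t\jb{n})-\cos^2(\tau\jb{n})$, $b_n=\sin^2(t\jb{n})-\sin^2(\tau\jb{n})$, and $c_n=\cos(t\jb{n})\sin(t\jb{n})-\cos(\tau\jb{n})\sin(\tau\jb{n})$. Each summand on the right is mean-zero and the coefficients satisfy $|a_n|,|b_n|,|c_n|\les |t-\tau|^\dl\jb{n}^\dl$, so the proofs of \eqref{G7} and \eqref{G9} go through verbatim with the extra factor $|h|^\dl$. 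Once this correction is made, your argument is essentially the paper's.
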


In handling 
the renormalized pieces, 
we also need the following identity, 
which follows directly from  \eqref{G1}:
\begin{align*}
\Big(|g_n^t|^2 & - \E\big[|g_n^t|^2\big]\Big)
 - \Big(  |g_n^\tau|^2  - \E\big[|g_n^\tau|^2\big]\Big)\\
& = \Big( \cos^2 (t\jb{n}) - \cos^2 (\tau\jb{n}) \Big) \Big( |g_n|^2 - 1\Big)\\
& \hphantom{X}
+ \Big( \sin^2 (t\jb{n}) - \sin^2 (\tau\jb{n}) \Big) \Big( |h_n|^2- 1\Big)\\
& \hphantom{X}
+ 2\Big( \cos (t\jb{n}) \sin (t\jb{n})
-   \cos (\tau\jb{n}) \sin (\tau\jb{n})\Big)
\cdot
\Re (g_n \cj{h_n}).
\end{align*}

\noi
The first two terms on the right-hand side
can be treated exactly as in the renormalized pieces in 
the proof of Lemma~\ref{LEM:nel}, 
while the last term can be handled without any difficulty.

\smallskip

We conclude this section by presenting the proof of 
Proposition \ref{PROP:ran1}.

\begin{proof}[Proof of Proposition \ref{PROP:ran1}]
Fix  $2\leq q < \infty$ and $j = 1, 2,$ or $3$.
Passing to the limit $N\rightarrow\infty$ in~\eqref{hyper1} of Lemma~\ref{LEM:nel}, 
we obtain that the limit $Z_j$ of $Z_{j,N}$ satisfies
$$
\big\| \| Z_j\|_{L^q_T W^{s_j, \infty}_x}\big\|_{L^p(\O)} \leq CT^{\frac{1}{q}}p^{\frac{j}{2}} 
$$

\noi
for any finite $p \geq 1$.
Then, it follows from Chebyshev's inequality\footnote {See for example Lemma 4.5 in \cite{TzBO}
and the proof of Lemma 3/2.2 in \cite{BOP1}.} that there exists 
a set $\O_{T, \infty}^{(1)}$ of complemental probability smaller than $C\exp(-c/T^\frac{2}{jq})$  such that 
\begin{align}
\label{H1}
\big\| Z_j\big\|_{L^q_TW^{s_j, \infty}_x}\leq \frac 12 T^{\frac{1}{2q}}
\end{align}

\noi
for any $\o\in\O_{T, \infty}^{(1)}$.
Similarly, given any $N \in \N$, 
it follows from \eqref{hyper2} (with $M \to \infty$) that 
 there exists 
a set $\O_{T, N}^{(1)}$ of complemental probability smaller than $C\exp(-cN^\frac{2\dl}{j}/T^\frac{2}{jq})$  such that 
\begin{align}
\label{H2}
 \big\|Z_{j}-Z_{j,N}\big\|_{L^q_TW^{s_j, \infty}_x}
&\leq \frac 12 T^{\frac{1}{2q}}
\end{align}

\noi
for any $\o\in\O_{T, N}^{(1)}$.
Combining \eqref{H1} and \eqref{H2}, 
we see that   \eqref{conv2}
holds
for any $\o \in \O_T^{(1)}$ defined by 
\begin{align}
 \O_{T}^{(1)} := \bigcap_{N \in \N \cup \{\infty\}} \O_{T, N}^{(1)}
\label{H3}
 \end{align}

\noi
whose complemental probability is 
smaller than $C\exp(-c/T^\frac{2}{jq})$.

Lemma \ref{LEM:nel} shows that   the sequence
$\{Z_{j,N}\}_{N\in \N}$  converges in 
\[L^p(\O;L^q([-T,T];W^{s_j, \infty}(\T^3)))\] 

\noi
to 
the  limit $Z_{j}$.
A slight modification of the proof of 
 Lemma~\ref{LEM:nel}
 shows that, given $t \in \R$, 
 the sequence 
$\{Z_{j,N}(t)\}_{N\in \N}$
converges to the limit $Z_{j}(t)$ in $L^p(\O; W^{s_j, \infty}(\T^3))$
with the uniform bound:
\begin{align*}
\big\|Z_{j,N}(t)\big\|_{L^p(\O; W^{s_j, \infty})}
& \leq C  p^{\frac{j}{2}}.
\end{align*}

\noi
We first upgrade this convergence to almost sure convergence.
From \eqref{hyper2} in Lemma~\ref{LEM:nel} and 
Chebyshev's inequality, we obtain that 
$$
P\bigg(\o\in \O: \|Z_{j}(t)-Z_{j, N}(t)\|_{W^{s_j, \infty}}\geq\frac{1}{k}\bigg)
\leq e^{-c N^\frac{2\dl}{j} k^{-\frac{2}{j}} }
$$

\noi
for $k \in \N$, 
 where the positive constant $c$ is independent of $k$ and $N$.
 Noting that the right-hand side is summable in $N \in \N$, 
 we can invoke 
 the Borel-Cantelli lemma
 to conclude that  there exists $\O_k$ of full probability such that
  for each $\o\in \O_k$,  there exists $M=M(\o) \geq 1$ such that for any $N\geq M$, 
  we have
$$
 \|Z_{j}(t;\o)-Z_{j, N}(t;\o)\|_{W^{s_j, \infty}}<\frac{1}{k}.
$$

\noi
Now, by setting $\Sigma=\bigcap_{k=1}^\infty\O_k$, 
we see that   $P(\Sigma)=1$ and that, for each $\o\in\Sigma$, 
$Z_{j,N}(t)$ converges to $Z_{j}(t)$ in $W^{s_j, \infty}(\T^3)$. 
Note that the set $\Si$ is dependent on the choice of $t \in \R$.

We now  prove 
that $\{Z_{j, N}\}_{N \in \N}$ converges to $Z_j$ almost surely in
$C([-T,T];W^{s_j, \infty}(\T^3))$.
Fix $t \in [-T, T]$ and  $h \in \R$ (such that $t + h \in [-T, T]$). 
From 
\eqref{hyper3}, \eqref{hyper4}, 
the almost sure convergence of $Z_{j, N}(t)$
to $Z_j(t)$, and the dominated convergence theorem, 
we obtain
\begin{align}
\big\|\delta_h Z_{j}(t)\big\|_{L^p(\O; W^{s_j, \infty})}
& \leq 
C  p^{\frac{j}{2}}|h|^{\delta}  ,
\label{hyper6}
\\
\big\|\delta_h Z_{j}(t)-\delta_{h}Z_{j,N}(t)\big\|_{L^p(\O; W^{s_j, \infty})}
& \leq 
C N^{-\delta} p^{\frac{j}{2}}|h|^{\delta}  
\label{hyper7}
\end{align}

\noi
for any $N \geq 1$.
By choosing $p \gg 1$ sufficiently large such that $p\dl > 1$, 
it follows from  Kolmogorov's continuity criterion \cite{Bass}
applied to \eqref{hyper3} and \eqref{hyper6}
that  $Z_{j, N}$, $N \in \N$,  and $Z_j$
are almost surely continuous with values in 
$W^{s_j, \infty}(\T^3)$.

In the following, we only consider $[0, T]$.
Let $Y_N = Z_j - Z_{j, N}$
and choose  $p \gg 1$ sufficiently large such that $p \dl > 2$.
Then, with $\ta \in \big(0, \dl - \frac 1p\big)$, 
it follows from Chebyshev's inequality and~\eqref{hyper7} that 
\begin{align*}
P\bigg( & \sup_{N \in \N} 
\max_{j = 1, \dots, 2^\l}
N^{\frac{\dl}{2}}
\Big\|Y_N\big(\tfrac{j}{2^\l}T\big) - Y_N\big(\tfrac{j-1}{2^\l}T\big)\Big\|_{W^{s_j, \infty}}
\geq  2^{-\ta\l}
\bigg)\notag\\
& = P\bigg(
\bigcup_{N \in \N}
\bigcup_{j = 1}^{2^\l}
\bigg\{
 \Big\|Y_N\big(\tfrac{j}{2^\l}T\big) - Y_N\big(\tfrac{j-1}{2^\l}T\big)\Big\|_{W^{s_j, \infty}}
\geq N^{-\frac{\dl}{2}} 2^{-\ta\l}\bigg\}
\bigg)\notag\\
& \leq  
\sum_{N = 1}^\infty \sum_{j = 1}^{2^\l}
P\bigg(\Big\|Y_N\big(\tfrac{j}{2^\l}T\big) - Y_N\big(\tfrac{j-1}{2^\l}T\big)\Big\|_{W^{s_j, \infty}}
\geq N^{-\frac{\dl}{2}} 2^{-\ta\l}
\bigg)\notag\\
& \leq  
\sum_{N= 1}^\infty
\sum_{j = 1}^{2^\l}
 N^{\frac{p\dl}{2}} 2^{p\ta \l}
\, \E\bigg[\Big\|Y_N\big(\tfrac{j}{2^\l}T\big) - Y_N\big(\tfrac{j-1}{2^\l}T\big)\Big\|_{W^{s_j, \infty}}^p\bigg]\notag\\
& \leq C(p)  \cdot 2^{( p(\ta  -\dl) + 1)\l}
\sum_{N= 1}^\infty
 N^{- \frac{p\dl}{2}}\\
& \leq C(p)  \cdot 2^{( p(\ta  -\dl) + 1)\l}, 
\end{align*}

\noi
where we used the fact that $p \dl > 2$ in the second to the last step.
Note that $p(\ta  -\dl) + 1 <0.$
Then, summing over $\l \in \N$, we obtain 	
\begin{align*}
\sum_{\l = 0}^\infty P\bigg( 
\sup_{N \in \N}  \max_{j = 1, \dots, 2^\l}
N^{\frac \dl 2}\Big\|Y_N\big(\tfrac{j}{2^\l}T\big) - Y_N\big(\tfrac{j-1}{2^\l}T\big)\Big\|_{W^{s_j, \infty}}
\geq  2^{-\ta\l}
\bigg) < \infty.
\end{align*}

\noi
Hence, by the Borel-Cantelli lemma, 
there exists a set $\wt \Sigma \subset \O$ with $P(\wt \Si) = 1$
such that, for each $\o \in \wt \Si$, we have 
\begin{align*}
\sup_{N\in \N} \max_{j = 1, \dots, 2^\l}
N^{\frac \dl 2} \Big\|Y_N\big(\tfrac{j}{2^\l}T;\o\big) - Y_N\big(\tfrac{j-1}{2^\l}T;\o\big)\Big\|_{W^{s_j, \infty}}
\leq  2^{-\ta\l}
\end{align*}

\noi
for all $\l \geq L = L(\o)$.
This in particular implies that there exists $C = C(\o)>0$ such that 
\begin{align}
\max_{j = 1, \dots, 2^\l}
\Big\|Y_N\big(\tfrac{j}{2^\l}T;\o\big) - Y_N\big(\tfrac{j-1}{2^\l}T;\o\big)\Big\|_{W^{s_j, \infty}}
\leq C(\o) N^{-\frac \dl 2} 2^{-\ta\l}
\label{K7}
\end{align}

\noi
for any $\l \geq 0$, uniformly in $N \in \N$.

Fix $t \in [0, T]$.
By expressing $t$  in the following binary expansion (dilated by $T$):
\begin{align*}
 t = T\sum_{j = 1}^\infty \frac{b_j}{2^j}, 
 \end{align*}

\noi
where $b_j \in \{0, 1\}$,
we set
 $t_\l = T\sum_{j = 1}^\l \frac{b_j}{2^j}$
and $t_0 = 0$.
Then, from \eqref{K7}
along with the continuity of $Y_N$ with values in $W^{s_j, \infty}(\T^3)$, we have
\begin{align}
\begin{split}\
\|Y_N(t; \o)\|_{W^{s_j, \infty}}
& \leq \sum_{\l = 1}^\infty  \|Y_N(t_\l;\o)- Y_N(t_{\l-1};\o)\|_{W^{s_j, \infty}} + \|Y_N(0;\o)\|_{W^{s_j, \infty}}\\
& \leq C(\o) N^{-\frac \dl 2}
\sum_{\l = 1}^\infty
2^{-\ta\l} + \|Y_N(0;\o)\|_{W^{s_j, \infty}}\\
& \leq C'(\o) N^{-\frac \dl 2}
 + \|Y_N(0;\o)\|_{W^{s_j, \infty}}
\end{split}
 \label{K8}
\end{align}

\noi
for each $\o \in \wt \Si$.
Note that the right-hand side of \eqref{K8} is independent of $t\in [0, T]$.
Hence, by taking a supremum in $t \in [0, T]$, we obtain 
\begin{align*}
\|Z_j(\o) - Z_{j, N}(\o)\|_{C([0, T];W^{s_j, \infty})}
& \leq C'(\o) N^{-\frac \dl 2}
 + \|Y_N(0;\o)\|_{W^{s_j, \infty}}\notag\\
 & \too 0
\end{align*}

\noi
as $N \to \infty$.
Here, we used the almost sure convergence of $\{Z_{j, N}(0)\}_{N \in \N}$ 
to $Z_j(0)$
in $W^{s_j, \infty}(\T^3)$.
This proves almost sure convergence of $\{Z_{j, N}\}_{N \in \N}$
in $C([-T, T]; W^{s_j, \infty}(\T^3))$.

Lastly,  it follows from  Egoroff's theorem
that, given $T>0$,
there exists $\O_T^{(2)}$  of complemental probability smaller than $C\exp(-c/T^\kk)$ 
such that 
the estimate \eqref{conv1} holds.
Finally, by setting $\O_T = \O_T^{(1)} \cap \O_T^{(2)}$, 
where $\O_T^{(1)}$ is as in \eqref{H3}, 
we see that both \eqref{conv2} and \eqref{conv1} hold
on $\O_T$.
 This completes the proof of Proposition~\ref{PROP:ran1}.
\end{proof}

\begin{remark}\label{REM:cube}\rm
The restriction $\al > 1$ in Proposition \ref{PROP:ran1}
appears in making sense of the renormalized cubic power $Z_{3}$
and it
reflects the well-known fact that Wick powers of degree $\geq 3$ 
for  the three-dimensional Gaussian free field do not exist.
See for example Section~2.7 in~\cite{EJS}.

\end{remark}

\section{On the second order stochastic term $z_{2,N}$}
\label{SEC:sto2}

We first study the regularity and convergence properties of $z_{2, N}$ defined in \eqref{Z4}.
For notational convenience, we set 
\begin{align}
\begin{split}
Z_{4, N}: = z_{2,N}
& =-\L^{-1}\big((z_{1,N})^3-3\s_N z_{1,N}\big) \\
& =-\L^{-1}Z_{3, N}.
\end{split}
\label{z2}
\end{align}

\noi
As a consequence of Proposition~\ref{PROP:ran1}, we have the following statement.

\begin{proposition}\label{PROP:z2}
Let $1< \al \leq \frac 32$ and set
\begin{align}
s_4<3(\al-\tfrac{3}{2}) + 1.
\label{reg_z2}
\end{align}

\noi	
Then, given any $T>0$, $Z_{4, N}$ converges
almost surely 
to some limit $Z_4$ in $C([-T, T]; W^{s_4, \infty}(\T^3))$
as $N \to \infty$.
Moreover, 
there exist positive constants $C$, $c$, $\kappa$, $\ta$ such that for every $T>0$,
 there exists a  set $\O_T$ of complemental  probability smaller than $C\exp(-c/T^\kk)$ 
such that given $\eps>0$,  there exists $N_0 = N_0(T, \eps) \in \N$ such that  
\begin{align*}
\|Z_{4,N}\|_{C([-T,T];W^{s_4, \infty}(\T^3))} & \leq T^\ta
\end{align*}

\noi
and 
\begin{align*}
\|Z_{4,M}-Z_{4,N}\|_{C([-T,T];W^{s_4, \infty}(\T^3))} &  <\eps
\end{align*}

\noi
for any $\o \in \O_T$ and any $M\geq N \geq 1$, 
where we allow $N = \infty$ with the understanding that $Z_{4, \infty} = Z_4$.
\end{proposition}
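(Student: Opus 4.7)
\medskip

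\noindent\textbf{Proof plan.}
The starting observation is the definition \eqref{z2}, namely $Z_{4,N}=z_{2,N}=-\L^{-1}Z_{3,N}$, which reduces the proposition to a linear estimate applied to the already controlled stochastic object $Z_{3,N}$ from Proposition~\ref{PROP:ran1}. Since $\L^{-1}$ is the Duhamel integral operator \eqref{lin1} for the Klein--Gordon equation, the kernel $\frac{\sin((t-t')\jb{\nb})}{\jb{\nb}}$ provides one degree of smoothing in space, which exactly accounts for the gap between the regularity $s_3<3(\al-\tfrac32)$ available for $Z_{3,N}$ and the target regularity $s_4<3(\al-\tfrac32)+1$ required for $Z_{4,N}$.

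Concretely, I would establish a deterministic linear estimate of the form
\begin{equation*}
\|\L^{-1}F\|_{C([-T,T];W^{s_4,\infty}(\T^3))}\lesssim T\,\|F\|_{L^\infty([-T,T];W^{s_4-1,\infty}(\T^3))},
\end{equation*}
applied to $F=Z_{3,N}$ and to the differences $F=Z_{3,M}-Z_{3,N}$. Since the condition on $s_4$ is open, I can afford any small Sobolev-embedding loss: pick $\wt s_4$ and a finite $p\gg1$ so that $s_4<\wt s_4-\tfrac{3}{p}$ and $\wt s_4-1<3(\al-\tfrac32)$; then the $H^s$-type energy bound \eqref{basic_reg} combined with the Sobolev embedding $W^{\wt s_4,p}\hookrightarrow W^{s_4,\infty}$ and the trivial $L^\infty_T W^{s,\infty}_x\hookrightarrow L^1_T W^{s,\infty}_x$ on $[-T,T]$ yields the claim (alternatively, a direct Bernstein/Besov argument on Littlewood--Paley blocks gives the same bound). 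Continuity in $t$ of $\L^{-1}F$ with values in $W^{s_4,\infty}(\T^3)$ follows from the continuity of $F=Z_{3,N}$ with values in $W^{s_3,\infty}(\T^3)$ together with the dominated convergence theorem applied in the Duhamel integral.

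With this linear estimate in hand, the proof is almost immediate: Proposition~\ref{PROP:ran1} (applied with $j=3$) provides, for each $T>0$, a set $\O_T$ of complemental probability at most $C\exp(-c/T^\kk)$ on which $\|Z_{3,N}\|_{C_TW^{s_3,\infty}_x}\leq T^\ta$ (after an innocuous reinterpretation of the $L^q_T$ bound \eqref{conv2} as an $L^\infty_T$ bound, using the almost sure continuity of $Z_{3,N}$), and on which $Z_{3,N}\to Z_3$ in $C([-T,T];W^{s_3,\infty}(\T^3))$ with the difference bound \eqref{conv1}. Applying the linear estimate to $Z_{3,N}$ and to $Z_{3,M}-Z_{3,N}$ respectively yields the uniform bound $\|Z_{4,N}\|_{C_TW^{s_4,\infty}_x}\lesssim T\cdot T^\ta=T^{\ta+1}$ and the Cauchy-in-$N$ estimate $\|Z_{4,M}-Z_{4,N}\|_{C_TW^{s_4,\infty}_x}\lesssim T\eps$, on the same set $\O_T$. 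Defining $Z_4$ as the resulting uniform limit and relabelling the constants $\ta$ and $\eps$ then gives the two displayed inequalities of the proposition, with $N=\infty$ included.

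I do not expect any substantive obstacle: the work is essentially book-keeping for a linear deterministic operator with a one-derivative smoothing, and every stochastic input has already been assembled in Proposition~\ref{PROP:ran1}. The one point that requires a small amount of care is the passage from the natural $H^s$-valued estimate \eqref{basic_reg} to the $W^{s,\infty}$-framework used here; this is absorbed by the fact that the regularity condition \eqref{reg_z2} is strict, leaving room for an arbitrarily small Sobolev-embedding loss.
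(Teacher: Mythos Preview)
Your reduction to a deterministic linear estimate on $\L^{-1}$ is natural, but the estimate you write down,
\[
\|\L^{-1}F\|_{C([-T,T];W^{s_4,\infty}(\T^3))}\lesssim T\,\|F\|_{L^\infty([-T,T];W^{s_4-1,\infty}(\T^3))},
\]
is \emph{not} available, and this is precisely where the argument breaks. The energy bound \eqref{basic_reg} is specific to $L^2$-based spaces: the linear Klein--Gordon propagator $e^{it\jb{\nb}}$ is unbounded on $L^p(\T^3)$ for $p\neq 2$, so there is no analogue of \eqref{basic_reg} in $W^{s,p}$ or $W^{s,\infty}$ with a full derivative of smoothing. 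If you pass through $H^s$ and then embed into $W^{s_4,\infty}$, the Sobolev embedding costs $3/2$ derivatives and you only reach $s_4<3(\al-\tfrac32)-\tfrac12$. Your alternative ``Bernstein/Besov'' route fares no better: block-by-block one has $\|\P_j\L^{-1}F(t)\|_{L^\infty}\lesssim 2^{3j/2}\|\P_j\L^{-1}F(t)\|_{L^2}\lesssim 2^{j/2}\int_0^t\|\P_j F(t')\|_{L^\infty}\,dt'$, again falling short of the target range \eqref{reg_z2}. In fact the sharp fixed-time bound (Miyachi--Peral) for $e^{it\jb{\nb}}$ on $L^\infty(\T^3)$ loses exactly one derivative, which cancels the gain from $\jb{\nb}^{-1}$ and leaves \emph{no} net smoothing in $W^{s,\infty}$.

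The paper circumvents this obstruction by returning to the probabilistic level rather than treating $\L^{-1}$ deterministically. It writes $Z_{4,N}$ and $\dt Z_{4,N}$ as time integrals of $F^\be_N(t,t')=e^{i\be(t-t')\jb{\nb}}Z_{3,N}(t')$ and observes that, for fixed $(t,t',x)$, the propagator only multiplies each Fourier mode of $Z_{3,N}$ by a unimodular factor; hence the pointwise moment bound $\|\jb{\nb}^{s_3}F^\be_N(t,t',x)\|_{L^p(\O)}\leq Cp^{3/2}$ holds exactly as in Lemma~\ref{LEM:nel}. Minkowski then gives $L^p(\O;L^q_TL^r_x)$ control of $\jb{\nb}^{s_4-1}F^\be_N$ for finite $q,r$, and the passage from $L^q_T$ to $C_T$ is handled by the interpolation inequality of Lemma~\ref{LEM:BTT} (bounding $\|Z_{4,N}\|_{C_TW^{s_4+\eps,r}_x}$ by $\|Z_{4,N}\|_{L^q_TW^{s_4+2\eps,r}_x}$ and $\|\dt Z_{4,N}\|_{L^q_TW^{s_4-1,r}_x}$). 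This recovers the full range $s_4<3(\al-\tfrac32)+1$. The moral is that the one-derivative smoothing of the Duhamel operator is only genuinely available here once you exploit the Gaussian structure \emph{after} applying the propagator, not before.
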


\begin{proof}

Given $s_4$ satisfying \eqref{reg_z2}, 
choose $\eps > 0$ sufficiently small such that 
\begin{align}
s_4+2\eps <3(\al-\tfrac{3}{2}) + 1.
\label{ZZ0}
\end{align}

\noi
By Sobolev's inequality, there exists finite $r\gg 1$ such that 
\begin{align}
\|Z_{4, N}\|_{C_TW^{s_4, \infty}_x}
\les \|Z_{4, N}\|_{C_TW^{s_4+\eps, r}_x}.
\label{ZZ1}
\end{align}

\noi
Furthermore, by Lemma \ref{LEM:BTT}, there exists finite $q \gg1 $ such that 
\begin{align}
\begin{split}
\|Z_{4, N}\|_{C_TW^{s_4+\eps, r}_x}
& \leq \| Z_{4, N}\|_{L^q_T  W^{s_4+2\eps, r}_x}^{1 - \frac 1q}
\| Z_{4, N}\|_{W^{1,q} _T  W^{s_4-1, r}_x}^\frac{1}{q}\\
& \les 
\| Z_{4, N}\|_{L^q_T  W^{s_4+2\eps, r}_x}
+  \| \dt Z_{4, N}\|_{L^q_T  W^{s_4-1, r}_x}, 
\end{split}
\label{ZZ2}
\end{align}
	
\noi
where we applied
 Young's inequality in the second step.
From \eqref{z2} with \eqref{lin1}, 
we have 
\begin{align}
\dt Z_{4, N} = - \int_0^t \cos ((t - t') \jb{\nb}) Z_{3, N}(t') dt'.
\label{ZZ3}
\end{align}

\noi
Hence, from \eqref{ZZ1}, \eqref{ZZ2}, and \eqref{ZZ3} with \eqref{z2}, we obtain
\begin{align}
\begin{split}
\|Z_{4, N}\|_{C_TW^{s_4, \infty}_x}
& \les T^{1 - \frac{1}{q}} \sum_{\be \in \{ -1, 1\}}
\big\| F^\be_{N}(t, t') \big\|_{L^q_{t, t'} ([-T, T]^2;   W^{s_4-1+2\eps, r}_x)}, 
\end{split}
\label{ZZ3a}
\end{align}

\noi
where $F^\be_{N}$ is given by 
\begin{align*}
 F^\be_{N}(t, t') =  e^{i \be  (t - t')  \jb{\nb}} Z_{3, N}(t').
\end{align*}

\noi
Fix  $(t, t', x) \in \R^2\times \T$.
Since the propagator $e^{i \be  (t - t')  \jb{\nb}}$ does not affect 
 the computation done for $Z_{3, N}$ in the proof of Lemma \ref{LEM:nel}, 
we obtain 
\begin{align}
\big\| F^\be_{N}(t, t', x)\big\|_{L^p(\O)} \leq C p^\frac{3}{2}, 
\label{ZZ4}
\end{align}

\noi
uniformly in   $(t, t', x) \in \R^2\times \T$.
Therefore, given finite $p \geq \max (q, r)$, 
from~\eqref{ZZ3a}, Minkowski's integral inequality,  and \eqref{ZZ4}, 
we have
\begin{align*}
\begin{split}
\Big\|\|Z_{4, N}\|_{C_TW^{s_4, \infty}_x}\Big\|_{L^p(\O)}
& \les T^{1 - \frac{1}{q}} \sum_{\be \in \{ -1, 1\}}
\Big\| \big\|F^\be_{N}(t, t', x) \big\|_{L^p(\O)} \Big\|_{L^q_{t, t'} ([-T, T]^2;   W^{s_4-1+2\eps, r}_x)}\\
& \les p^\frac{3}{2}T^{1 - \frac{1}{q}}
\end{split}
\end{align*}

\noi
thanks to the regularity restriction \eqref{ZZ0}.
Then, the rest  follows from proceeding
as in the proof of 
 Proposition~\ref{PROP:ran1}
 (in addition to Lemma~\ref{LEM:hhhlll},
  one should take into account the trivial continuity property in $t$ of the time integration 
  in the definition of~$Z_{4,N}$). 
 \end{proof}

We also need to study the following quintic  stochastic term:
\begin{align}
\begin{split}
Z_{5,N}:
\! & =\big\{(z_{1,N})^2-\s_{N}\big\} z_{2,N}\\
& = - \big\{(z_{1,N})^2-\s_{N}\big\}
\cdot \L^{-1}\big((z_{1,N})^3-3\s_N z_{1,N}\big) .
\end{split}
\label{B1}
\end{align}

\noi
We have the following statement.

\begin{proposition}\label{PROP:Z4}
Let $1< \al \leq \frac 32$ and set
\begin{align}
s_5 < \min\big( 5\al - \tfrac {13}{2}, 2(\al - \tfrac 32)\big).
\label{reg2}
\end{align}

\noi
Then, given any $T>0$, $Z_{5, N}$ converges
almost surely 
to some limit $Z_5$ in $C([-T, T]; W^{s_5, \infty}(\T^3))$
as $N \to \infty$.
Moreover, 
there exist positive constants $C$, $c$, $\kappa$, $\ta$ such that for every $T>0$,
 there exists a  set $\O_T$ of complemental  probability smaller than $C\exp(-c/T^\kk)$ 
such that given $\varepsilon>0$,  there exists $N_0 = N_0(T, \eps) \in \N$ such that  
\begin{align*}
\|Z_{5,N}\|_{C([-T,T];W^{s_5, \infty}(\T^3))} & \leq T^\ta
\end{align*}
and 
\begin{align*}
\|Z_{5,M}-Z_{5,N}\|_{C([-T,T];W^{s_5, \infty}(\T^3))} &  <\varepsilon
\end{align*}

\noi
for any $\o \in \O_T$ and any $M\geq N \geq 1$, 
where we allow $N = \infty$ with the understanding that $Z_{5, \infty} = Z_5$.

\end{proposition}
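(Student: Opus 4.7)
The plan is to follow the template of Propositions \ref{PROP:ran1} and \ref{PROP:z2}, decomposing the product via Bony's paraproduct \eqref{para1} and treating the resonant part stochastically when necessary. I would write
\begin{align*}
Z_{5,N} = Z_{2,N} \pl Z_{4,N} + Z_{2,N} \pe Z_{4,N} + Z_{2,N} \pg Z_{4,N}
\end{align*}
and handle each piece separately.

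First I would dispose of the two off-diagonal paraproducts deterministically. Since $s_2 < 0$, Lemma \ref{LEM:para}(i)\,\eqref{para2} yields
\begin{align*}
\| Z_{2,N} \pl Z_{4,N} \|_{B^{s_2+s_4}_{\infty,2}} \lesssim \| Z_{2,N} \|_{B^{s_2}_{\infty,2}}\, \| Z_{4,N} \|_{B^{s_4}_{\infty,2}},
\end{align*}
and the symmetric estimate handles $Z_{2,N} \pg Z_{4,N}$. Combined with the embedding \eqref{embed} and the regularity and convergence already established for $Z_{2,N}$ and $Z_{4,N}$ in Propositions \ref{PROP:ran1} and \ref{PROP:z2} (choosing $s_2$ and $s_4$ slightly below their thresholds so that $s_2 + s_4$ lies arbitrarily close to $5\alpha - \tfrac{13}{2}$ from below), this produces both the uniform $C([-T,T]; W^{s_5, \infty})$ bound and the Cauchy property for these two pieces for any $s_5$ satisfying \eqref{reg2}. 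Moreover, when $\alpha > \tfrac{13}{10}$ so that $s_2 + s_4 > 0$, the resonant piece $Z_{2,N} \pe Z_{4,N}$ is also controlled by the same deterministic mechanism via Lemma \ref{LEM:para}(i)\,\eqref{para3}, which completes the proof on this subrange.

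The core new work is the stochastic treatment of $Z_{2,N} \pe Z_{4,N}$ in the regime $1 < \alpha \leq \tfrac{13}{10}$, where $s_2 + s_4 \leq 0$ and the resonant product is genuinely singular. Using the Duhamel representation \eqref{z2}--\eqref{lin1}, I would expand the product at a fixed $(t, x)$ into a time integral of a sum over five Gaussians with the resonant frequency restriction $|n_1+n_2| \sim |n_3+n_4+n_5|$ and the Duhamel kernel $\sin((t-t')\jb{n_3+n_4+n_5})/\jb{n_3+n_4+n_5}$, which contributes one degree of smoothing. The Wick product formula then decomposes this into Wiener chaos pieces of orders $5$, $3$, and $1$ (arising from $0$, $1$, or $2$ pairwise contractions between the two factors of $Z_{2,N}$ and the three factors of $z_{2,N}$). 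For each chaos piece I would: (i) compute the $L^2(\Omega)$ norm at fixed $(t,x)$ by orthogonality of the Wiener chaos and reduce the frequency sum to a discrete convolution controlled by Lemma \ref{LEM:SUM}, the one degree of smoothing from the Duhamel kernel being critical and saturating precisely at the threshold $s_5 < 5\alpha - \tfrac{13}{2}$; (ii) upgrade to $L^p(\Omega)$ via the Wiener chaos estimate (Lemma \ref{LEM:hyp}) at the cost of a factor $p^{k/2}$ with $k \in \{1,3,5\}$; (iii) pass to $L^p(\Omega; L^q_T L^r_x)$ by Minkowski's inequality as in \eqref{G4a}.

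Once these $L^p(\Omega)$ bounds are in hand, the remaining steps are essentially mechanical adaptations of the proofs of Propositions \ref{PROP:ran1} and \ref{PROP:z2}: Sobolev embedding combined with Lemma \ref{LEM:BTT} trades the $L^\infty_T$ norm for an $L^q_T$ norm plus a $W^{1,q}_T$ control on $\dt (Z_{2,N} \pe Z_{4,N})$, where the time derivative either falls on the Gaussian factors (producing bounded oscillatory corrections as in \eqref{ZZ3}) or on the upper limit of the Duhamel integral; a two-time difference estimate analogous to Lemma \ref{LEM:hhhlll} is established by combining the H\"older regularity in $t$ of the Gaussian modes $g_n^t, h_n^t$ with that of the Duhamel kernel; and almost sure convergence, together with the probabilistic bound on $\Omega_T$, follows from Chebyshev's inequality, the Borel--Cantelli lemma, Kolmogorov's continuity criterion, and Egoroff's theorem, exactly as in the proof of Proposition \ref{PROP:ran1}. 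The main obstacle is the bookkeeping in step~(i) for the resonant piece: one must verify that the frequency restriction $|n_1+n_2| \sim |n_3+n_4+n_5|$ imposed by the resonant projector, taken together with the one-derivative gain from the kernel, is enough to close the frequency sums at the stated threshold; the chaos-$1$ piece (corresponding to two pairwise contractions, hence a linear Gaussian expression with a renormalization-like coefficient) is the most delicate and ultimately dictates the bound $5\alpha - \tfrac{13}{2}$.
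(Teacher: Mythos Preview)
Your approach matches the paper's: paraproduct decomposition, deterministic control of the off-diagonal pieces via Lemma~\ref{LEM:para}, probabilistic control of the resonant piece when $\al \le \tfrac{13}{10}$, then the same Wiener-chaos/Kolmogorov/Borel--Cantelli/Egoroff machinery as in Proposition~\ref{PROP:ran1}.

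There is one imprecision. Your ``symmetric estimate'' for $Z_{2,N} \pg Z_{4,N}$ via \eqref{para2} only applies when the low-frequency factor $Z_{4,N}$ has negative regularity, i.e.\ when $\al \le \tfrac{7}{6}$. For $\al > \tfrac{7}{6}$ one may take $s_4 > 0$, and then \eqref{para2} (which requires $s_1<0$ for the low-frequency input) is unavailable; one must use \eqref{para2a} instead, which places $Z_{2,N} \pg Z_{4,N}$ only in $W^{s_2,\infty}$, not $W^{s_2+s_4,\infty}$. This is exactly where the second constraint $s_5 < 2(\al - \tfrac32)$ in \eqref{reg2} enters, which your write-up invokes but does not explain. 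The paper makes this case distinction explicit.

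For the resonant piece in the range $1 < \al \le \tfrac{13}{10}$, the paper takes a shorter route than your full chaos decomposition into orders $5,3,1$: it directly bounds the second moment by
\[
\E\big[|\jb{\nb}^{s_5}(Z_{2,N} \pe z_{2,N})(t,x)|^2\big]
\les \sum_{n\in\Z^3} \jb{n}^{2s_5} \sum_{\substack{n = m_1+m_2\\|m_1|\sim|m_2|}} A(m_1)\,B(m_2),
\]
with $A(m_1)$ and $B(m_2)$ the second-moment weights of $\widehat{Z}_{2,N}(m_1)$ and $\widehat{z}_{2,N}(m_2)$, and closes via Lemma~\ref{LEM:SUM}\,(ii) using the resonant constraint $|m_1|\sim|m_2|$. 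Your chaos-by-chaos bookkeeping would also work and is arguably more transparent, but the paper's shortcut avoids tracking the contraction terms separately. Incidentally, the threshold $5\al - \tfrac{13}{2}$ is already saturated by this top-order contribution, not by the first chaos as you suggest.
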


\begin{remark}\rm
When $\al \geq \frac{7}{6}$
(which in particular includes the case $ \al > \frac 54$), 
the regularity condition~\eqref{reg2} reduces to 
$s_5 <  2(\al - \tfrac 32)$.

\end{remark}

\begin{proof}
By the paraproduct decomposition \eqref{para1}, 
we have
\begin{align*}
Z_{5,N}
& =Z_{2,N} z_{2,N}\\
& = Z_{2,N} \pl z_{2,N} + Z_{2,N}\pe z_{2,N}+ Z_{2,N}\pg z_{2,N}.
\end{align*}

\noi
Note that $2(\al - \frac 32) \leq  \min\big(0,  3(\al - \frac32)+1\big)$
for $\al \in (1, \frac 32]$.
Then,  from Lemma~\ref{LEM:para}, we have
\begin{align*}
\| Z_{2,N} \pl z_{2,N}(t)\|_{W^{s_5, \infty}}
\les \| Z_{2,N}(t)\|_{W^{2(\al - \frac 32) - \eps, \infty}} 
\|  z_{2,N}(t)\|_{W^{ 3(\al - \frac32)+1 -\eps, \infty}}
\end{align*}

\noi
for small $\eps > 0$, provided that $s_5$ satisfies
\begin{align}
s_5 < 2(\al - \tfrac 32) +   3(\al - \tfrac32) + 1= 5\al - \tfrac{13}{2}.
\label{B3}
\end{align}

\noi
Similarly,  for $s_5$ satisfying \eqref{B3}, 
 Lemma~\ref{LEM:para} yields
\begin{align*}
\| Z_{2,N} \pg z_{2,N}(t)\|_{W^{s_5, \infty}}
\les \| Z_{2,N}(t)\|_{W^{2(\al - \frac 32) - \eps, \infty}} 
\|  z_{2,N}(t)\|_{W^{3(\al - \frac32) +1-\eps, \infty}}
\end{align*}

\noi
for small $\eps > 0$, provided that 
$ 3(\al - \frac32)+1 -\eps < 0$
namely, ~$\al \leq \frac 76$.
On the other hand, when  $ \al > \frac 76$, 
we see from Proposition \ref{PROP:z2} that $z_{2, N}$ has a spatial positive regularity (for each fixed $t$).
In this case, we have
\begin{align*}
\| Z_{2,N} \pg z_{2,N}(t)\|_{W^{s_5, \infty}}
\les \| Z_{2,N}(t)\|_{W^{2(\al - \frac 32) - \eps, \infty}} 
\|  z_{2,N}(t)\|_{L^\infty}
\end{align*}

\noi
as long as 
\begin{align}
s_5 < 2(\al - \tfrac 32).
\label{B6}
\end{align}

\noi
Note that the condition \eqref{B6} is stronger than \eqref{B3} 
when $\al > \frac 76$.

It remains to study the resonant product $z_{2,N}\pe Z_{2,N}$.
When $\al > \frac{13}{10}$,
we have
\begin{align*}
2(\al - \tfrac 32) +  3(\al - \tfrac32) +1 = 5\al - \tfrac{13}{2} > 0.
\end{align*}	

\noi
and thus Lemma \ref{LEM:para} yields
\begin{align*}
\| Z_{2,N} \pe z_{2,N}(t)\|_{W^{s_5, \infty}}
\les \| Z_{2,N}(t)\|_{W^{2(\al - \frac 32) - \eps, \infty}} 
\|  z_{2,N}(t)\|_{W^{ 3(\al - \frac32) +1 -\eps, \infty}}
\end{align*}
  
\noi
for $s_5$ satisfying \eqref{B3}.
Next, we consider the case $1 < \al \leq \frac{13}{10}$.
Using the independence of $\{g^t_n\}_{n \in \Ld} $, we have
\begin{align*}
\begin{split}
\sup_{N \in \N} \sup_{(t,x)\in \R\times\T^3} 
\E\Big[   |\jb{\nb}^{s_5 } &  (  Z_{2,N}   \pe z_{2,N})(t, x) |^2 \Big]\\
&  \les
\sum_{n\in\Z^3}\langle n\rangle^{2s_5}
\sum_{\substack{n = m_1+m_2\\|m_1|\sim |m_2|}}A(m_1)B(m_2),
\end{split}
\end{align*}

\noi
where $A(m_1)$ and $B(m_2)$ are given by 
$$
A(m_1)=\sum_{m\in\Z^3}\frac{1}{\langle m\rangle^{2\al}\langle m_1-m\rangle^{2\al}}
$$

\noi
and
$$
B(m_2)=\frac{1}{\jb{m_2}^2}\sum_{(n_1,n_2)\in\Z^6}\frac{1}{\langle n_1\rangle^{2\al}\langle n_2\rangle^{2\al}\langle m_2-n_1-n_2\rangle^{2\al}}
$$

In the following, we only consider 
the case $\al < \frac 32$.
We clearly have the bound
$$
A(m_1)\lesssim \frac{1}{\langle m_1\rangle^{4\al-3}},
$$

\noi
provided that $\al>\frac{4}{3}$. 
Similarly, by Lemma \ref{LEM:SUM}, we have 
$$
B(m_2)\lesssim  \frac{1}{\langle m_2\rangle^{6\al-4}}, 
$$

\noi
provided that $\al > 1$.
Hence, we obtain
\begin{align*}
\sum_{\substack{n = m_1+m_2\\|m_1|\sim |m_2|}}\,A(m_1)B(m_2)
\lesssim
\sum_{\substack{m_1\in \Z^3\\|m_1|\sim |n-m_1|}}
  \frac{1}{
\jb{m_1}^{4\al-3}
\jb{n - m_1}^{6\al-4}}
\lesssim\frac{1}{\langle n\rangle^{10\al-10}}, 
\end{align*}

\noi
where we crucially used the resonant restriction $|m_1|\sim |n-m_1|$.
Therefore, we obtain
\begin{align*}
\sup_{N \in \N} \sup_{(t,x)\in [0,T]\times\T^3} 
\E\Big[   |\jb{\nb}^{s_5 }   (  Z_{2,N}   \pe z_{2,N})(t, x) |^2 \Big]
 \les
\sum_{n\in\Z^3}\frac{1}{\jb{n}^{-2s_5 + 10 \al - 10}},
\end{align*}

\noi
where the last sum is convergent, provided that   $s_5$ satisfies \eqref{B3}.
With  this bound in hand, we can proceed as in the proof of 
Proposition \ref{PROP:ran1}
(with Lemmas~\ref{LEM:nel} and~\ref{LEM:nel_pak}).
This completes the proof of Proposition~\ref{PROP:Z4}.
\end{proof}

\begin{remark}\label{REM:res} \rm
(i) When $\al > \frac{13}{10}$, 
we made sense of the resonant product
$Z_{2,N} \pe z_{2,N}$
in a  {\it deterministic} manner.
Namely, we only used the almost sure regularity properties
of $Z_{2, N}$ and $z_{2, N}$ but did not use the random structure of these terms
in making sense of their resonant product.
On the other hand, 
when $1 < \al \leq \frac{13}{10}$, 
the sum of the regularities of 
 $Z_{2, N}$ and $z_{2, N}$ is negative
  and thus
 their resonant product does not make sense in a deterministic manner.
 This requires us to make sense of the resonant product 
$Z_{2,N} \pe z_{2,N}$
via a {\it probabilistic} argument.
Hence, when $1 < \al \leq \frac{13}{10}$, 
 we need to view 
the limit 
$Z_{5}^{\pe} = Z_{2, \infty} \pe z_{2, \infty}$ as part of a predefined enhanced data set, leading
to a different interpretation of the  equation for $w = u - z_1 - z_2$.
See Subsection \ref{SUBSEC:fac} for a further discussion.
Lastly, we point out that the resulting regularity
restriction \eqref{B3} holds for both cases
$\al > \frac{13}{10}$ and $1 < \al \leq \frac{13}{10}$.

\smallskip

\noi
(ii) When $\al = 1$, 
there is a logarithmically  divergent contribution in  taking a limit of $Z_{5, N}$
as $N \to \infty$.
In this case, we need to introduce another renormalization,
eliminating a quartic singularity.
For a related argument in the parabolic setting, 
see \cite{MWX}.

\end{remark}


\section{Proof of Theorem~\ref{THM:3}}
\label{SEC:LWP}

\subsection{Setup} 
Recall that
$
u_{N}=z_{1,N}+z_{2,N}+w_{N},
$
where $w_N$ solves the equation \eqref{L3}. 
In Sections \ref{SEC:sto1} and \ref{SEC:sto2}, 
we already established the necessary regularity and  convergence properties of the sequences 
$\{z_{j,N}\}_{N\in \N}$, $j=1,2$. 
It remains to establish the convergence of the sequence  $\{w_{N}\}_{N\in \N}$. 
This will be done by 
(i) first establishing multilinear estimates via a purely deterministic method
and then
(ii) applying the 
regularity and  convergence properties of
the relevant stochastic terms from Sections \ref{SEC:sto1} and \ref{SEC:sto2}.

With \eqref{G3} and \eqref{B1}, 
we can write the equation \eqref{L3} as 
\begin{equation*}
\begin{cases}
\L w_N+F_0+F_1(w_N)+F_2(w_N)+F_3(w_N)=0\\
(w_N, \dt  w_N)|_{t=0}=(0, 0),
\end{cases}
\end{equation*}

\noi
where the source term\footnote {Namely, the purely stochastic terms independent
of the unknown $w_N$.} is given by
\begin{align*}
F_0=3Z_{5, N}+ 3z_{1,N}(z_{2,N})^2+(z_{2,N})^3,
\end{align*}

\noi
the linear term in $w_N$ is given by 
\begin{align*}
F_{1}(w_N)=3Z_{2, N}w_N+6z_{1,N}  z_{2,N} w_N+3 (z_{2,N})^2 w_{N},
\end{align*}

\noi
and the nonlinear terms in $w_N$ are as follows: 
\begin{align*}
F_2(w_N)=3z_{1,N} (w_N)^2+3 z_{2,N}w_N^2
\qquad \text{and} \qquad F_3(w_N)=w_N^3.
\end{align*}

In the following, we study the Duhamel formulation for $w_N$:
\begin{align}
w_{N}=\L^{-1}(F_0)+\L^{-1}(F_1(w_N))+\L^{-1}(F_2(w_N))+\L^{-1}(F_3(w_N)).
\label{w4a}
\end{align}

\noi
In the next three subsections, 
we first establish estimates for each individual term in the $X_T$-norm defined in \eqref{X1}.
In Subsection~\ref{SUBSEC:end}, 
we then combine these estimates
with the 
regularity and  convergence properties of
the relevant stochastic terms from Sections~\ref{SEC:sto1} and~\ref{SEC:sto2}
and prove 
almost sure convergence of the sequence  $\{w_{N}\}_{N\in \N}$. 
In the following, we fix $0 < T \leq 1$.

\subsection{On the nonlinear terms  in  $w_N$}
\label{SUBSEC:pf2}

By the Strichartz estimate \eqref{strich}, we have
\begin{equation}\label{F3}
\big\|\L^{-1}(F_3(w_N))\big\|_{X_T}\lesssim \|w_N^3\|_{L^\frac{4}{3}_{T, x}}\leq \|w_N\|_{X_T}^3.
\end{equation}

We now turn to the analysis of $\L^{-1}(F_2(w_N))$. 
By \eqref{strich}, we have
\begin{equation}\label{ik1}
\big\|\L^{-1}(F_2(w_N))\big\|_{X_T}
\lesssim \big\|\jb{\nb}^{-\frac{1}{2}}\big(z_{1,N} w_N^2+z_{2,N}w_N^2\big)\big\|_{L^1_T L^2_x}.
\end{equation}

\noi
In the following, we first establish an estimate for fixed $t \in [-T, T]$.
Let $\s_1 > 0$.
By Sobolev's inequality,
$$
\big\|\jb{\nb}^{-\frac{1}{2}}(z_{1,N} w_{N}^2)(t) \big\|_{L^2}
\lesssim \big\|\jb{\nb}^{-\sigma_1}(z_{1,N} w_{N}^2)(t)\big\|_{L^r} ,
$$
provided that 
\begin{align}
\frac{1}{2}-\sigma_1 \geq \frac{3}{r}-\frac{3}{2}.
\label{w5}
\end{align}

\noi
By Lemma~\ref{LEM:gko}\,(ii), we have
$$
\big\|\jb{\nb}^{-\sigma_1}(z_{1,N} w_{N}^2)\big(t)\|_{L^r}
\les\|\jb{\nb}^{-\sigma_1}z_{1,N}(t)\|_{L^p}\|\jb{\nb}^{\sigma_1}(w_N^2)(t)\|_{L^q},
$$

\noi
provided that $0 \leq \s_1 \leq 1$ and 
\begin{equation}\label{zvezda}
\sigma_1 \geq   \frac{3}{p}+\frac{3}{q}-\frac{3}{r} .
\end{equation}

\noi
In the following, we will choose $p\gg 1$
such that $\s_1 > \frac{3}{q} - \frac 3r$ guarantees \eqref{zvezda}. 
By Lemma~\ref{LEM:gko}\,(i) 
and  Sobolev's inequality, 
we have 
\begin{align*}
\big\|\jb{\nb}^{\sigma_1}(w_N^2)(t)\big\|_{L^q}
& \lesssim\|\jb{\nb}^{\sigma_1}w_N(t)\|_{L^{q_1}}\|w_N(t)\|_{L^4}\\
& \lesssim \|\jb{\nb}^{\frac{1}{2}}w_N\|_{L^{2}}\|w_N(t)\|_{L^4}, 
\end{align*}

\noi 
provided that 
\begin{align}
\frac{1}{q}=\frac{1}{4}+\frac{1}{q_1}
\qquad \text{and}\qquad 
\frac{1}{2}-\sigma_1 \geq  \frac{3}{2}-\frac{3}{q_1}.
\label{w6}
\end{align}

\noi
In summary, if  the conditions
\eqref{w5}, \eqref{zvezda}, and \eqref{w6} are satisfied,
then we obtain the estimate 
\begin{equation}\label{zvezda2}
\big\|\jb{\nb}^{-\frac{1}{2}}(z_{1,N} w_{N}^2)(t)\big\|_{L^2}
\lesssim 
\|\jb{\nb}^{-\sigma_1}z_{1,N}(t)\|_{L^p}
\|\jb{\nb}^{\frac{1}{2}}(w_N)(t)\|_{L^{2}}
\|w_N(t)\|_{L^4}.
\end{equation}

\noi
Let us now show that we may ensure \eqref{w5}, \eqref{zvezda},  and \eqref{w6}. Since $p\gg 1$, it suffices to ensure that 
$$
\sigma_1>\frac{3}{q}-\frac{3}{r}=
\frac{3}{4}+\frac{3}{q_1}-\frac{3}{r}
\geq \frac{3}{4}+\frac{3}{2}-\bigg(\frac{1}{2}-\sigma_1\bigg)-\frac{3}{r}
\geq 2\sigma_1-\frac{1}{4}\,.
$$

\noi
This shows that  we can ensure \eqref{zvezda} and \eqref{w6} if $\sigma_1<\frac{1}{4}$. 
In this case, 
by  \eqref{zvezda2},  we arrive at the bound: 
\begin{align}
\big\|\jb{\nb}^{-\frac{1}{2}}(z_{1,N} w_N^2)\big\|_{L^1_T L^2_x}
\lesssim 
\|\jb{\nb}^{-\sigma_1}z_{1,N}\|_{L^{\frac{4}{3}}_T L^p_x}
\|\jb{\nb}^{\frac{1}{2}}w_N\|_{L^\infty_T L^{2}_x}
\|w_N\|_{L^4_T L^4_x}.
\label{w7}
\end{align}

\noi
Therefore, from \eqref{ik1} and \eqref{w7}
with the definition \eqref{X1} of the $X_T$-norm, 
we obtain
\begin{equation}
\big\|\L^{-1}(F_2(w_N))\big\|_{X_T}
\les T^\frac{1}{4}
\Big( \|z_{1,N}\|_{L^{2}_T W^{s_1, \infty}_x}
+ \|z_{2,N}\|_{L^{2}_T W^{s_1, \infty}_x}\Big)
\|w_N\|_{X_T}^2,
\label{F2}
\end{equation}

\noi
provided that 
\begin{align}
s_1 = -\s_1 > -\frac 14.
\label{s1}
\end{align}

\subsection{On the linear terms  in $w_N$}
\label{SUBSEC:pf3}

Let us next turn to the analysis of the terms linear in $w_N$.
By the Strichartz estimate  \eqref{strich}, we have 
\begin{align}
\label{ik2}
\begin{split}
\big\|\L^{-1}(F_1(w_N))\big\|_{X_T}
& \lesssim
\big\|\jb{\nb}^{-\frac{1}{2}}(Z_{2, N}w_{N})\big\|_{L^1_T L^2_x}
\\
& \hphantom{X}
+
\big\|\jb{\nb}^{-\frac{1}{2}}(z_{1,N} z_{2,N}  w_N)\big\|_{L^1_T L^2_x}
+\| (z_{2,N})^2 w_{N}\|_{L^{\frac{4}{3}}_{T, x}}.
\end{split}
\end{align}

\noi
We now evaluate each contribution on the right-hand side of \eqref{ik2}. 
By H\"older's inequality, we have 
\begin{align}
\| (z_{2,N})^2 w_{N}\|_{L^{\frac{4}{3}}_{, x}}
\leq T^\frac{1}{4} \|z_{2,N}\|_{L^8_T L^4_x}^2 \|w_N\|_{L^4_{T,x}}
\leq T^\frac{1}{4} \|z_{2,N}\|_{L^8_T W^{s_4, \infty}_x}^2  \|w_N\|_{X_T}, 
\label{w8}
\end{align}

\noi
provided that 
\begin{align}
 s_4 \geq 0.
\label{s2}
\end{align}

\noi
By Lemma \ref{LEM:gko} (ii), we have
\begin{align}
\begin{split}
\big\|\jb{\nb}^{-\frac{1}{2}}(Z_{2, N}w_{N})\big\|_{L^1_T L^2_x}
& \lesssim T^\frac{1}{2}
\|\jb{\nb}^{-\frac{1}{2}}Z_{2, N}\|_{L^2_T L^6_x}
\|\jb{\nb}^{\frac{1}{2}} w_N\|_{L^\infty_T L^2_x}\\
& \lesssim T^\frac{1}{2}
\|Z_{2, N}\|_{L^2_T W^{s_2, \infty}_x} \|w_N\|_{X_T}, 
\end{split}
\label{w9}
\end{align}

\noi
provided that 
\begin{align}
s_2 \geq -\frac 12.
\label{s3}
\end{align}

\noi
Finally, by applying Lemma \ref{LEM:gko} (ii) twice, 
we obtain
\begin{align}
\begin{split}
\big\|\jb{\nb}^{-\frac{1}{2}}(z_{1,N} z_{2,N}  w_N)\big\|_{L^1_T L^2_x}
& \lesssim 
\big\|\jb{\nb}^{s_1}(z_{1,N} z_{2,N})\big\|_{L^1_T L^6_x}
\|\jb{\nb}^{\frac{1}{2}} w_N\|_{L^\infty_T L^2_x}
\\
& \les T^\frac{1}{2}
\|z_{1,N} \|_{L^4_T W^{s_1, \infty}_x}
\|z_{2,N} \|_{L^4_T W^{s_4, \infty}_x}
\|w_N\|_{X_T}, 
\end{split}
\label{w10}
\end{align}

\noi
provided that 
\begin{align}
\max\Big(- \frac 12, -s_4\Big) \leq s_1 \leq 0.
\label{s4}
\end{align}

Therefore, putting \eqref{ik2}, \eqref{w8}, \eqref{w9}, and \eqref{w10}, 
we obtain 
\begin{align}
\begin{split}
\big\|\L^{-1}(F_1(w_N))\big\|_{X_T}
& \lesssim
T^\ta \Big\{
\|Z_{2, N}\|_{L^2_T W^{s_2, \infty}_x} 
+ \|z_{1,N} \|_{L^4_T W^{s_1, \infty}_x}
\|z_{2,N} \|_{L^4_T W^{s_4, \infty}_x}\\
& \hphantom{XXX}
+  \|z_{2,N}\|_{L^8_T W^{s_4, \infty}_x}^2
\Big\}
\|w_N\|_{X_T}
\end{split}
\label{F1}
\end{align}

\noi
for some $\ta > 0$ and $s_1, s_2,$ and $s_4$
satisfying \eqref{s2}, \eqref{s3}, and \eqref{s4}.

\subsection{On the source  terms} 
\label{SUBSEC:pf4}

We now estimate the contributions from the source terms.
Let $s_1$ and $s_4$ satisfy \eqref{s4}.
Then, 
by  Lemma \ref{LEM:gko} (ii) with H\"older's inequality
followed by Lemma~\ref{LEM:gko} (i), we have
\begin{align}
\begin{split}
\big\|\jb{\nb}^{-\frac{1}{2}}(z_{1,N}(z_{2,N})^2) \big\|_{L^1_T L^2_x}
& \leq
\big\|\jb{\nb}^{s_1}(z_{1,N}(z_{2,N})^2) \big\|_{L^1_T L^2_x}\\
& \les
\|\jb{\nb}^{s_1}z_{1,N} \|_{L^2_T L^4_x}
\Big\|\jb{\nb}^{s_4}\big((z_{2,N})^2\big) \Big\|_{L^2_T L^4_x}\\
& \les
T^\frac{1}{4} \|\jb{\nb}^{s_1}z_{1,N} \|_{L^4_{T, x}}
\|\jb{\nb}^{s_4}z_{2,N}\|_{L^4_T L^8_x}^2.
\end{split}
\label{w11}
\end{align}

\noi
Hence, 
from the Strichartz estimate \eqref{strich}
and \eqref{w11}, we obtain
\begin{align}
\begin{split}
\big\|\L^{-1}(F_0)\big\|_{X_T}
& \lesssim  
\|\jb{\nb}^{-\frac{1}{2}}Z_{5, N}\|_{L^1_T L^2_x}
+ \big\|\jb{\nb}^{-\frac{1}{2}}(z_{1,N}(z_{2,N})^2) \big\|_{L^1_T L^2_x}
+\|z_{2,N}\|_{L^4_T L^4_x}^3\\
& \lesssim  
T^\ta \Big\{\|Z_{5, N}\|_{L^2_T W^{s_5, \infty}_x}
+  \|z_{1,N} \|_{L^4_{T}W^{s_1, \infty}_x}
\|z_{2,N}\|_{L^4_T W^{s_4, \infty}_x}^2\\
& \hphantom{XXX}
+\|z_{2,N}\|_{L^8_T W^{s_4, \infty}_x}^3\Big\}
\end{split}
\label{F0}
\end{align}

\noi
for some $\ta > 0$, 
provided that $s_1$ and $s_4$ satisfy \eqref{s4}
and that $s_5$ satisfies
\begin{align}
s_5 \geq - \frac 12.
\label{s5}
\end{align}

\subsection{End of the proof}
\label{SUBSEC:end}

Let $s_1, s_2, s_4$, and $s_5$ satisfy \eqref{s1}, \eqref{s2}, \eqref{s3}, \eqref{s4}, and \eqref{s5}. 
Then, from \eqref{w4a}, \eqref{F3}, \eqref{F2}, \eqref{F1}, and \eqref{F0}, 
we have
\begin{equation*}
\begin{split}
 \|w_N\|_{X_T}
& \leq C
T^\ta A^{(1)}_N + CT^\ta 
A^{(2)}_N
\|w_N\|_{X_T}\\
& \hphantom{X} + C T^\ta
\bigg(\sum_{j = 1}^2 
\|z_{j,N}\|_{L^{2}_T W^{s_j, \infty}_x}\bigg)
\|w_N\|_{X_T}^2
+  C\|w_N\|_{X_T}^3, 
\end{split}
\end{equation*}

\noi
where $A^{(1)}_N $ and $A^{(2)}_N $ are defined by 
\begin{align}
\begin{split}
A^{(1)}_N & = \|Z_{5, N}\|_{L^2_T W^{s_5, \infty}_x}
+  \|z_{1,N} \|_{L^4_{T}W^{s_1, \infty}_x}
\|z_{2,N}\|_{L^4_T W^{s_4, \infty}_x}^2 
+\|z_{2,N}\|_{L^8_T W^{s_4, \infty}_x}^3, \\
A^{(2)}_N & = 
\|Z_{2, N}\|_{L^2_T W^{s_2, \infty}_x} 
+ \|z_{1,N} \|_{L^4_T W^{s_1, \infty}_x}
\|z_{2,N} \|_{L^4_T W^{s_4, \infty}_x}
+  \|z_{2,N}\|_{L^8_T W^{s_4, \infty}_x}^2.
\end{split}
\label{W2}
\end{align}

\noi	
Suppose that 
\begin{align}
\begin{split}
 R(T) := 
\sup_{N \in \N}  \max\Big(
& \|z_{1,N} \|_{L^4_T W_x^{s_1, \infty}}, 
 \|z_{2,N} \|_{L^8_T  W_x^{s_4, \infty}}, \\
&   \|Z_{2, N}\|_{L^2_T  W_x^{s_2, \infty}},  
\|Z_{5, N}\|_{L^2_T  W_x^{s_5, \infty}}\Big) 
\leq T^{\ta_0}
\end{split}
\label{W3}
\end{align}

\noi
for some $\ta_0 > 0$.
Then, it follows from a standard continuity argument
that there exists $T_0 > 0$ such that 
\begin{align*}
\| w_N \|_{X_T} \leq C(R) T^\ta
\end{align*}

\noi	
for any $0 < T \leq  T_0$, 
uniformly in $N \in \N$.
Here, we used the fact that $(w, \dt w) |_{t = 0} = (0, 0)$.

Let $M \geq N \geq 1$.
Note that $F_j$, $j = 0, 1, 2, 3$, are multilinear in $w_N$
and the stochastic terms $z_{1, N}$, $z_{2, N}$, $Z_{2, N}$, and $Z_{5, N}$.
Then, by proceeding as in Subsections \ref{SUBSEC:pf2}, \ref{SUBSEC:pf3}, and \ref{SUBSEC:pf4}, 
 we also obtain the following difference estimate:
\begin{equation}
\begin{split}
 \|w_M - w_N\|_{X_T}
& \leq C
T^\ta B^{(1)}_{M, N}
+ CT^\ta B^{(2)}_{M, N} \|w_N\|_{X_T}
+ CT^\ta A^{(2)}_N\|w_M  - w_N\|_{X_T}\\
& \hphantom{X} 
+ C T^\ta
\bigg(\sum_{j = 1}^2 
\|z_{j,M} - z_{j,N}\|_{L^{2}_T W^{s_j, \infty}_x}\bigg)
\|w_M\|_{X_T}^2\\
& \hphantom{X} 
+ C T^\ta
\bigg(\sum_{j = 1}^2 
\|z_{j,N}\|_{L^{2}_T W^{s_j, \infty}_x}\bigg)
\big(\|w_M \|_{X_T}+ \| w_N\|_{X_T}\big)
\|w_M - w_N\|_{X_T}\\
& \hphantom{X} 
+ C\big(\|w_M \|_{X_T}^2+ \| w_N\|_{X_T}^2\big)
\|w_M - w_N\|_{X_T}
\end{split}
\label{W5}
\end{equation}

\noi
where
$B^{(1)}_{M, N}$ and $B^{(2)}_{M, N}$ are defined by 
\begin{align*}
B^{(1)}_{M, N}
& = 
\|Z_{5, M} - Z_{5, N}\|_{L^2_T W^{s_5, \infty}_x}
+  \|z_{1, M}  - z_{1,N} \|_{L^4_{T}W^{s_1, \infty}_x}
 \|z_{2,M}\|_{L^4_T W^{s_4, \infty}_x}^2 \\
& \hphantom{X}
+  \|z_{1,N} \|_{L^4_{T}W^{s_1, \infty}_x}
\|z_{2, M} - z_{2,N}\|_{L^4_T W^{s_4, \infty}_x}
\big(\|z_{2,M}\|_{L^4_T W^{s_4, \infty}_x} + \|z_{2,N}\|_{L^4_T W^{s_4, \infty}_x}\big) \\
& \hphantom{X}
+\big(
 \|z_{2,M}\|_{L^8_T W^{s_4, \infty}_x}^2 + 
 \|z_{2,N}\|_{L^8_T W^{s_4, \infty}_x}^2\big)\|z_{2, M} - z_{2,N}\|_{L^8_T W^{s_4, \infty}_x}, \\
B^{(2)}_{M, N}
& =  
\|Z_{2, M} - Z_{2, N}\|_{L^2_T W^{s_2, \infty}_x} 
+ \|z_{1, M} - z_{1,N} \|_{L^4_T W^{s_1, \infty}_x}
\|z_{2,M} \|_{L^4_T W^{s_4, \infty}_x}\\
& \hphantom{X}
 + \|z_{1,N} \|_{L^4_T W^{s_1, \infty}_x}
\|z_{2, M} - z_{2,N} \|_{L^4_T W^{s_4, \infty}_x}\\
& \hphantom{X}
+ \big( \|z_{2,M}\|_{L^8_T W^{s_4, \infty}_x} +  \|z_{2,N}\|_{L^8_T W^{s_4, \infty}_x} \big) 
 \|z_{2, M} - z_{2,N}\|_{L^8_T W^{s_4, \infty}_x}.
\end{align*}

In addition to the assumption \eqref{W3}, we now suppose that 
as $N \to \infty$, 
$z_{1, N}$, $z_{2, N}$, $Z_{2, N}$, and $Z_{5, N}$
converge to 
the limits
$z_{1}$, $z_{2}$, $Z_{2}$, and $Z_{4}$	
in $C([-T, T]; W^{s, \infty}(\T^3))$
for $s = s_1, s_4, s_2$, and $s_5$, respectively.
Then, from \eqref{W5}, we obtain 
\begin{equation*}
 \|w_M - w_N\|_{X_T}
 \leq  C(R)T^\ta \|w_M  - w_N\|_{X_T} + o_{M, N\to \infty}(1)
\end{equation*}
\noi
Then, by possibly making $T_0 > 0$ smaller, 
we conclude that 
\[ \|w_N-w_M\|_{X_T} \too 0\]

\noi
for any $0 < T \leq T_0$
as $M, N \to \infty$.
This  implies that $w_N$ converges to some $w$
in $X_T$ as $N \to \infty$.
Recalling the decomposition 
 $u_{N}=z_{1,N}+z_{2,N}+w_{N}$, we 
conclude that $u_N$ converges
to $u = z_1 + z_2 + w$
in $C([-T, T]; H^{s_1}(\T^3))$
as $N \to \infty$.

It remains to check that 
  the assumption \eqref{W3}
and the assumption on the convergence of 
$z_{1, N}$, $z_{2, N}$, $Z_{2, N}$, and $Z_{5, N}$
hold true with large probability.
By choosing $s_1 = \al - \frac 32 - \eps$, 
$s_2 = 2(\al - \frac 32) -\eps$, 
$s_4 = 3(\al - \frac 32)+1 -\eps$, 
and $s_5 =  2(\al - \frac 32) -\eps$
for some small $\eps > 0$, 
it is easy to see that 
the conditions
 \eqref{s1}, \eqref{s2}, \eqref{s3}, \eqref{s4}, and \eqref{s5}
 are satisfied for $\frac 54 < \al \leq \frac 32$. 
 (Note that the restriction $\al > \frac 54$ appears in 
  \eqref{s1}, \eqref{s3}, \eqref{s4}, and~\eqref{s5}.)
Therefore, 
it follows from 
Proposition \ref{PROP:ran1}, \ref{PROP:z2}, and~\ref{PROP:Z4}
that 
there exists
a set  $\O_T$ of complemental probability smaller than $C\exp(-c/T^\kk)$ 
such that 
   the assumption~\eqref{W3}
and the assumption on the convergence of 
$z_{1, N}$, $z_{2, N}$, $Z_{2, N}$, and $Z_{5, N}$
hold true on $\O_T$, allowing us to prove
the convergence of $u_N$ to $u$ 
in $C([-T, T]; H^{s_1}(\T^3))$ as above.
This completes the proof of  Theorem~\ref{THM:3}.

\section{On the triviality of the limiting dynamics without renormalization}
\label{SEC:tri}

\subsection{Reformulation of the problem}

Fix $1 \leq  \al \leq \frac 32$
and a pair $(w_0, w_1) \in \H^\frac{3}{4}(\T^3)$.
Let $u_N$ be the solution to the (un-renormalized)
NLW~\eqref{KG}
with the following initial data:
\begin{align}
 (u_N, \dt u_N)|_{t= 0} 
= (w_0, w_1) + 
(\wt u_{0, N}^\o, \wt u_{1, N}^\o), 
\label{Y0}
\end{align}

\noi
where the random initial data 
$(\wt u_{0, N}^\o, \wt u_{1, N}^\o)$ is given by 
\eqref{series_N2}
with $C_N >0$ implicitly defined as in \eqref{CN2}.
In this section, we present the proof of Theorem \ref{THM:Tri}
by reformulating the Cauchy problem for $u_N$ 
as
\begin{equation}\label{KG2}
\begin{cases}
\L_N  u_N+u_N^3 - C_N u_N=0\\
(u_N, \dt u_N)|_{t = 0} = (w_0, w_1) + (\wt u^\o_{0, N}, \wt u^\o_{1, N}), 
\end{cases}
\end{equation}

\noi
where 
 $\L_N=\partial_t^2 -\Delta +C_N$ as in \eqref{lin2}.

Since $C_N$ in \eqref{CN2} is implicitly defined, 
we first need to study  the asymptotic behavior of $C_N$ as $N \to \infty$.

\begin{lemma}\label{LEM:CN}
Let $1\leq  \al \leq \frac 32$.
Then,  for each $N \in \N$, 
there exists a unique number $C_N \geq 1$ satisfying the equation \eqref{CN2}.
Moreover, we have 
\begin{align}
C_N = 3\s_N + R_N
\label{CN4}
\end{align}

\noi
for all sufficiently large $N \gg1 $, 
where $\s_N  = \sum_{|n|\leq N} \jb{n}^{-2\al}$ is as in \eqref{CN}
and the error term $R_N$ satisfies
\begin{align*}
|R_N| \sim 
\begin{cases}
\log \log N, & \text{for } \al = \frac 32, \rule[-3mm]{0pt}{0pt}\\
N^{\frac{1}{2}(3-2\al)^2}, & \text{for }  1 \leq \al  < \frac 32.
\end{cases}
\end{align*}

\noi
In particular, we have $R_N = o(\s_N)$ as $N \to \infty$.

\end{lemma}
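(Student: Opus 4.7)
The plan is to first establish existence and uniqueness of $C_N$ via monotonicity, then extract the leading asymptotic $3\sigma_N$ through an algebraic identity, and finally estimate the remainder via a bootstrap combined with a sum-to-integral comparison.

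For existence and uniqueness, I would study
\[ F_N(C) := C - 3\sum_{|n|\leq N}\frac{1}{(C + |n|^2) \jb{n}^{2(\al-1)}} \]
for $C > 0$. It is continuous and strictly increasing in $C$, with $F_N(0^+) = -\infty$ (from the $n=0$ term) and $F_N(C) \to +\infty$ as $C \to \infty$, yielding a unique root $C_N > 0$. The $n = 0$ term alone gives $C_N \geq 3/C_N$, so $C_N \geq \sqrt{3} \geq 1$.

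Next I would exploit $\jb{n}^2 - (C_N + |n|^2) = 1 - C_N$ to write
\[ \frac{1}{(C_N + |n|^2)\jb{n}^{2(\al-1)}} = \frac{1}{\jb{n}^{2\al}} + \frac{1 - C_N}{\jb{n}^{2\al}(C_N + |n|^2)}, \]
which, summed, produces the clean identity
\[ C_N = 3\sigma_N + 3(1 - C_N)\Theta_N, \qquad \Theta_N := \sum_{|n|\leq N}\frac{1}{\jb{n}^{2\al}(C_N + |n|^2)}, \]
so that $R_N = 3(1-C_N)\Theta_N$. The bound $\frac{1}{C_N + |n|^2} \leq \jb{n}^{-2}$ (using $C_N \geq 1$) immediately gives $C_N \leq 3\sigma_N$. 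For a matching lower bound, the elementary estimate $\Theta_N \leq C_N^{-1} + \sum_{n\neq 0}\jb{n}^{-2\al - 2} \lesssim 1$ (valid since $\al \geq 1$), combined with the rewrite $C_N = 3(\sigma_N + \Theta_N)/(1 + 3\Theta_N)$, yields $C_N \gtrsim \sigma_N$. Hence $C_N \asymp \sigma_N$ and $C_N \to \infty$.

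With the a priori bounds in hand, I would bootstrap by estimating $\Theta_N$ sharply via sum-to-integral comparison, splitting at $|n| \sim \sqrt{C_N}$. For $1 \leq \al < 3/2$, the integral $\int_0^N \frac{r^{2-2\al}\,dr}{C_N + r^2}$ contributes $\sim C_N^{(1-2\al)/2}$ from both regimes, giving $\Theta_N \sim C_N^{(1-2\al)/2}$, and therefore $|R_N| \sim C_N^{(3-2\al)/2} \sim N^{(3-2\al)^2/2}$ after inserting $C_N \sim 3\sigma_N \sim N^{3-2\al}$. For the borderline case $\al = 3/2$, I would use partial fractions to evaluate $\int_1^N \frac{dr}{r(C_N + r^2)} = \frac{1}{2C_N}\log\frac{N^2(C_N + 1)}{C_N + N^2} \sim \frac{\log C_N}{C_N}$ (since $N^2 \gg C_N \sim \log N$), giving $\Theta_N \sim \log\log N/\log N$ and $|R_N| \sim \log \log N$. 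In either regime $R_N = o(\sigma_N)$, and plugging the precise $\Theta_N$ back into $C_N = 3\sigma_N + R_N$ finishes the argument.

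The main obstacle is the borderline case $\al = 3/2$, where the double-logarithmic correction is delicate: $\Theta_N$ must be computed up to leading order (rather than bounded), requiring exact integral evaluation and careful truncation at $|n| = N$. A secondary subtlety is the bootstrap itself, since the crude bound $\Theta_N \lesssim 1$ must be established without any lower bound on $C_N$ beyond the trivial $C_N \geq 1$; this is what forces the separate treatment of the $n = 0$ contribution throughout.
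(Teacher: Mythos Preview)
Your proof is correct and follows essentially the same route as the paper: monotonicity for existence and uniqueness, the same identity $R_N = -3(C_N-1)\Theta_N$ (this is exactly the paper's formula \eqref{CN5}), and estimation of $\Theta_N$ by splitting at $|n|\sim\sqrt{C_N}$. Your explicit rewrite $C_N = 3(\sigma_N+\Theta_N)/(1+3\Theta_N)$ for the a priori bound $C_N\asymp\sigma_N$ and your partial-fraction computation at $\alpha=\tfrac32$ are slightly more detailed than the paper's terse dyadic estimates, but the underlying ideas are identical.
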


\begin{proof}
Let $C_N$ be as in \eqref{CN2}.
As $C_N$ increases  from $0$ to $\infty$, 
the right-hand side of \eqref{CN2} decreases from $\infty$  to $0$.
Hence, for each $N \in \N$, there exists a unique solution $C_N > 0$ to~\eqref{CN2}. 

Suppose that $C_N < 1$ for some $N \in \N$. Then, considering the contribution
from $n = 0$ on the right-hand side of \eqref{CN2}, 
we obtain $C_N \geq 3$, leading to a contradiction.
Hence, we must have $C_N \geq1$ for any $N \in \N$.

We first consider the case $1\leq \al < \frac 32$.
Since $C_N \geq 1$, it follows from \eqref{CN2}
that 
$C_N \les N^{3 - 2\al}$.
Using this upper bound on $C_N$, 
we estimate the contribution from $|n| \sim N$:
\begin{align*}
 C_N  \ges 
  \sum_{|n|\leq N }\frac{1} {(N^{3-2\al} + |n|^2) \jb{n}^{2(\al-1)}}
  \ges 
  \sum_{|n|\sim N }\frac{1} { \jb{n}^{2\al}} \sim N^{3-2\al}, 
\end{align*}

\noi
where we used the assumption $\al \geq  1$ in the second step.
This shows that $C_N \sim N^{3 - 2\al}$.
Using this asymptotic behavior with \eqref{CN}, 
we then obtain
\eqref{CN4}
with the error term  $R_N$  given by 
\begin{align}
R_N 
& =  3 \sum_{|n|\leq N }\frac{1} { \jb{n}^{2(\al-1)}}
\bigg( \frac{1}{(C_N + |n|^2)} - \frac{1}{\jb{n}^2}\bigg).
\label{CN5}
\end{align}

\noi
By separately estimating the contributions
from $\big\{|n| \ll N^{\frac 32 - \al}\big\}$
and $\big\{  N^{\frac 32 - \al} \leq |n| \leq N\big\}$, 
we have
\begin{align*}
|R_N| 
 & = 3 \sum_{|n|\leq N }\frac{1} { \jb{n}^{2\al}}
 \frac{C_N - 1}{(C_N + |n|^2)}\\
& \sim N^{(\frac 32 - \al)(3-2\al)}.
\end{align*}

Next, we consider the case $\al = \frac 32$.
Proceeding as above, 
we immediately see that $C_N \sim \log N$.
The contribution to $R_N$ in \eqref{CN5}
from $\big\{| n |\ges \sqrt{\log N}\big\}$
is $O(1)$, 
while the  contribution to $R_N$ in \eqref{CN4}
from $\big\{| n| \ll  \sqrt{\log N}\big\}$
is $O(\log \log N)$.
This completes the proof of  Lemma~\ref{LEM:CN}.
\end{proof}

\subsection{On the Strichartz estimates with a parameter}

In order to study the equation~\eqref{KG2}, 
we  review the relevant Strichartz estimates
for the Klein-Gordon operator with a general mass.
Given $a\geq 1$, 
with a slight abuse of notation, define $\L_a$ by 
\begin{align*}
\L_a :=\partial_t^2 -\Delta +a.
\end{align*}

\noi
Let  $\L_a^{-1}$ be  the Duhamel integral operator given by 
\begin{align*}
\L_a^{-1} F(t) = \int_0^t \frac{\sin((t-t')\sqrt{a - \Dl} )}{\sqrt{a - \Dl}} F(t') dt'.
\end{align*}

\noi
Namely, 
 $u :=\L_a^{-1}(F)$ is the solution to 
 the following nonhomogeneous linear equation:
\begin{equation*}
\begin{cases}
\L_a u= F\\ 
(u,\partial_t u)|_{t=0}=(0,0).
\end{cases}
\end{equation*}

\noi
Then, by making systematic modifications of the proof of Lemma \ref{LEM:Str}
on $\R^3$ (see \cite{tz-cime}) and applying the finite speed of propagation,
we see that the same non-homogeneous Strichartz estimate as \eqref{strich} holds, 
uniformly in $a \geq 1$:
\begin{align}
\|\L_a^{-1}(F)\|_{X_T}
\lesssim 
\min\Big(
\|F\|_{L^1([-T,T];H^{-\frac{1}{2}}(\T^3))}
,
\|F\|_{L^{\frac{4}{3}}([-T,T]\times\T^3)}
\Big)
\label{Stri2}
\end{align}

\noi
for any  $0 < T \leq 1$, 
where the $X_T$-norm is defined in \eqref{X1}.

We also record the following lemma on the linear solution
associated with $\L_a$, $a\geq 1$.

\begin{lemma}\label{LEM:lina}
Given $a\geq 1$, 
define
$S_a(t)$ by 
\[ S_a(t) (w_0, w_1) = \cos (t\sqrt{a - \Dl}) w_0 + \frac{\sin(t \sqrt{a-\Dl})}{\sqrt{a-\Dl}}w_1.\]

\noi
Then, there exists $C>0$ such that 
\begin{align}
 \|S_a(t) (w_0, w_1)\|_{X_T}
 \le C \|(w_0, w_1) \|_{\H^\frac{3}{4}}
 \label{lina1}
\end{align}

\noi
for
any  $(w_0, w_1) \in \H^\frac{3}{4}(\T^3)$ and $0 <  T \leq 1$, uniformly in  $a \geq 1$.
Moreover, $S_a(t) (w_0, w_1)$ tends to $0$ 
  in the space-time distributional sense as $a \to \infty$. 
\end{lemma}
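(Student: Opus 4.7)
The plan is to split $S_a(t)(w_0, w_1)$ into its $\cos$ and $\sin$ components, bound each in $L^\infty_T H^{1/2}_x$ and $L^4_{T, x}$ separately (matching the definition of $X_T$ in \eqref{X1}), and then verify the distributional limit via Riemann--Lebesgue applied to each Fourier mode.

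For the $L^\infty_T H^{1/2}_x$ component, the Fourier multiplier $\cos(t\sqrt{a-\Delta})$ is pointwise bounded by $1$, and is thus a contraction on $H^{1/2}$ uniformly in $a \geq 1$ and $t \in \R$. For the $\sin$ contribution, the multiplier $(a+|n|^2)^{-1/2}$ is bounded by $\jb{n}^{-1}$ uniformly in $a\geq 1$, and hence $\frac{\sin(t\sqrt{a-\Delta})}{\sqrt{a-\Delta}}$ maps $H^{-1/2}$ boundedly into $H^{1/2}$ uniformly in $a$; the trivial embedding $H^{-1/4} \hookrightarrow H^{-1/2}$ then controls this contribution by $\|(w_0, w_1)\|_{\H^{3/4}}$.

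For the $L^4_{T, x}$ component, the plan is to invoke the homogeneous Strichartz estimate
\[ \|e^{\pm it\sqrt{a-\Delta}} f\|_{L^4_{T, x}} \lesssim \|f\|_{H^{3/4}}, \]
uniform in $a \geq 1$ and $0 < T \leq 1$. This should be obtained by the same argument underlying \eqref{strich}: the analogous estimate on $\R^3$ for the Klein--Gordon propagator with mass parameter $\sqrt{a}$ is uniform in $a$ (the principal symbol is independent of $a$ and the mass only improves dispersion), and transfers to $\T^3$ via the finite speed of propagation, which equals $1$ regardless of $a$. Decomposing $\cos$ and $\sin$ into half-waves and applying this estimate to $w_0$ and to $(a-\Delta)^{-1/2}w_1$ respectively (whose $H^{3/4}$-norm is dominated by $\|w_1\|_{H^{-1/4}}$ uniformly in $a \geq 1$) then yields \eqref{lina1}.

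For the distributional limit, we test $S_a(t)(w_0, w_1)$ against $\varphi \in C^\infty_c((-T, T) \times \T^3)$ and expand in Fourier series:
\begin{align*}
\int_{-T}^T & \int_{\T^3} S_a(t)(w_0, w_1)\, \overline{\varphi(t,x)}\, dx\, dt \\
& = \sum_{n \in \Z^3} \bigg[ \widehat{w}_0(n) \int_{-T}^T \cos(t\sqrt{a+|n|^2})\, \overline{\widehat{\varphi}(t, n)}\, dt \\
& \quad + \frac{\widehat{w}_1(n)}{\sqrt{a+|n|^2}} \int_{-T}^T \sin(t\sqrt{a+|n|^2})\, \overline{\widehat{\varphi}(t, n)}\, dt \bigg].
\end{align*}
Integrating by parts once in $t$ (using the compact support of $\varphi$ to kill the boundary terms), each oscillatory time integral is $O((a+|n|^2)^{-1/2})$. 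Since $\widehat{\varphi}(t, n)$ decays faster than any polynomial in $n$, the series converges absolutely with total bound $O(a^{-1/2})$, which vanishes as $a \to \infty$. The main obstacle will be verifying the uniformity in $a \geq 1$ of the homogeneous Strichartz estimate on the $L^4_{T,x}$ piece; though this is essentially a standard consequence of the argument giving \eqref{strich}, tracking the constants carefully and pinning down the precise regularity loss from $H^{1/2}$ (wave) to $H^{3/4}$ (uniform-in-$a$ Klein--Gordon) is where the technical work lies.
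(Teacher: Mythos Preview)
Your route for the $L^4_{T,x}$ piece via a uniform-in-$a$ homogeneous Strichartz estimate is the wrong turn. As the paper points out immediately after this lemma (Remark~\ref{REM:Str}), the homogeneous Strichartz estimate for $\L_a$ places data in $H^{s}_a$ with the $a$-dependent weight $(a+|n|^2)^{s/2}$; for $s>0$ this norm blows up as $a\to\infty$ on low frequencies, and in particular $\|f\|_{H^{1/2}_a}$ is \emph{not} controlled by $\|f\|_{H^{3/4}}$ uniformly in $a$ (test on $f\equiv 1$). So the dispersive argument you sketch does not deliver \eqref{lina1}, and the ``technical work'' you anticipate cannot be carried out along that line. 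The paper bypasses this entirely: by H\"older in $t$ and the Sobolev embedding $H^{3/4}(\T^3)\hookrightarrow L^4(\T^3)$,
\[
\|S_a(t)(w_0,w_1)\|_{L^4_{T,x}}\le T^{1/4}\|S_a(t)(w_0,w_1)\|_{L^\infty_T L^4_x}\lesssim T^{1/4}\|S_a(t)(w_0,w_1)\|_{L^\infty_T H^{3/4}_x}\le T^{1/4}\|(w_0,w_1)\|_{\H^{3/4}},
\]
the last step using exactly the multiplier bounds you already wrote for the $L^\infty_T H^{1/2}_x$ part, just at regularity $\tfrac34$ instead of $\tfrac12$. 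No dispersion is needed, and the exponent $\tfrac34$ in the statement is precisely the Sobolev exponent for $L^4(\T^3)$.

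Your argument for the distributional limit via one integration by parts in $t$ is correct and even gives a quantitative rate $O(a^{-1/2})$. The paper instead writes $e^{it\sqrt{a+|n|^2}}=e^{it\sqrt a}\,e^{it(\sqrt{a+|n|^2}-\sqrt a)}$, notes that the second factor tends to $1$ for each fixed $n$, applies dominated convergence over $n\in\Z^3$, and then invokes the Riemann--Lebesgue lemma in $t$ on the resulting integral $\int e^{it\sqrt a}\jb{f,\phi(t)}_{L^2_x}\,dt$. Either route is fine.
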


\begin{proof}

The estimate \eqref{lina1} follows
easily from H\"older's inequality in $t$
and Sobolev's inequality in $x$
along with the boundedness of $S_a(t)$ in $\H^\frac{3}{4}(\T^3)$.
As for the second claim, 
we only consider $e^{it\sqrt{a - \Dl}} f$
for $f \in L^2(\T^3)$.
Note that, for each fixed $n \in \Z^3$, 
$\sqrt{a + |n|^2} - \sqrt a$ tends to 0 as $a \to \infty$.
Then, by 
the dominated convergence theorem (for the summation in $n \in \Z^3$)
and 
the Riemann-Lebesgue lemma (for the integration in $t$), we have
\begin{align*}
\begin{split}
\lim_{a \to \infty}\iint 
& \big(e^{it\sqrt{a - \Dl}} f\big)(x) \cj{\phi(t, x)} \, dx dt\\
& = \lim_{a \to \infty}\int e^{it\sqrt a} \bigg( \sum_{ n \in \Z^3} 
e^{it (\sqrt{a + |n|^2} - \sqrt a)}
\ft f(n) \cj{\ft \phi (t, n)} \bigg) dt \\
& = \lim_{a \to \infty}\int e^{it\sqrt a} \jb{f, \phi(t)}_{L^2_x} dt \\
& = 0
\end{split}
\end{align*}

\noi
for any test function $\phi \in C^\infty(\R\times \T^3)$ with a compact support in $t$.
\end{proof}

\begin{remark}\label{REM:Str}\rm

\noi
Let $a\geq 1$.
Then, we have the following homogeneous Strichartz estimate:
\begin{align}
 \| S_a(t) (w_0, w_1)\|_{L^q([0, 1];  L^r(\T^3))} \leq C \| (w_0, w_1) \|_{H^\frac{2}{q}_a\times H^{\frac{2}{q}-1}_a(\T^3)}
\label{lin5}
\end{align}

\noi
for $2 < q \leq \infty$ and $\frac1q + \frac 1r = \frac 12$, 
where the $H^s_a$-norm is defined by 
\[ \| f\|_{H^s_a} = \bigg(\sum_{n \in \Z^3} (a + |n|^2)^s |\ft f(n)|^2 \bigg)^\frac{1}{2}.\]

\noi
The proof of \eqref{lin5} follows from a straightforward modification
of the standard homogeneous Strichartz estimate (i.e.~$a = 1$).
For $s >0$, 
the $H^s_a$-norm diverges as $a \to \infty$
and hence the homogeneous Strichartz estimate~\eqref{lin5}
is not  useful for our application.

\end{remark}

\subsection{Proof of Theorem \ref{THM:Tri}}

Let $\wz_{1, N}$ and $\ws_N$ be as in \eqref{T1}
and \eqref{T3}.
As in \eqref{G3}, \eqref{z2}, and \eqref{B1}, we define 
\begin{align}
\begin{split}
\wZ_{1,N}:= \wz_{1,N},
\quad 
 \wZ_{2,N}  & :=(\wz_{1,N})^2-\ws_{N},
\quad  \wZ_{3,N}:=(\wz_{1,N})^3- 3\ws_{N} \wz_{1,N}, \\
\wZ_{4, N} & : =  \wz_{2,N}: =-\L_N^{-1}\big((\wz_{1,N})^3-3\ws_N \wz_{1,N}\big), \\
 \wZ_{5,N}  & :  =\big\{(\wz_{1,N})^2-\ws_{N}\big\} \wz_{2,N}, 
\end{split}
\label{Y1}
\end{align}

\noi
where $\L_N$ is as  in \eqref{lin2}.
Then, by repeating the arguments in Sections \ref{SEC:sto1} and \ref{SEC:sto2}, 
we see that the analogues of Propositions
\ref{PROP:ran1},   \ref{PROP:z2}, and \ref{PROP:Z4}
hold for 
$\wZ_{j,N}$, $j = 1, \dots, 5$.
In the following lemma, we summarize
the regularity and convergence properties
of these stochastic terms.

\begin{lemma}\label{LEM:stoa}

Let $1< \al <  \frac 32$
and $s_j$, $j = 1, \dots, 5$, 
satisfy the regularity assumptions \eqref{reg1}, \eqref{reg_z2}, and \eqref{reg2}.
Fix $j = 1, \dots, 5$.
 Then, given any $T>0$, $\wZ_{j, N}$ converges
almost surely 
to $0$ in $C([-T, T]; W^{s_j, \infty}(\T^3))$
as $N \to \infty$.
Moreover, 
given $2\leq q < \infty$, 
there exist 
positive constants $C$, $c$, $\kappa$, $\ta$ 
and small $\dl > 0$
such that for every $T> 0$, 
 there exists a  set $\O_T$ of complemental probability smaller than $C\exp(-c/T^\kk)$ 
such that 
\begin{equation}\label{conv1a}
\big\|\wZ_{j,N}\big\|_{L^q([-T,T];W^{s_j, \infty}(\T^3))}\leq C_N^{-\dl} T^\ta
\end{equation}

\noi
for any  $\o\in\O_T$
and any $ N\geq 1$.
In particular, 
for any $\o \in \O_T$, 
 $\wt Z_{j, N}$ tends to $0$ in $L^q([-T,T];W^{s_j, \infty}(\T^3))$
 as $N \to \infty$.

When $\al = \frac 32$, 
the same result holds but only along a subsequence $\{N_k \}_{k \in \N}$.

\end{lemma}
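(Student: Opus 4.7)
My plan is to repeat, almost verbatim, the proofs of Propositions \ref{PROP:ran1}, \ref{PROP:z2}, and \ref{PROP:Z4}, with $(z_{1,N},\s_N,\L^{-1})$ replaced by $(\wz_{1,N},\ws_N,\L_{N}^{-1})$, while extracting at each step a quantitative decay factor $C_{N}^{-\dl}$ from the additional weight $\nbn^{-2}$ that is present in every Fourier coefficient of $\wz_{1,N}$ (see \eqref{T2}).  Since Lemma \ref{LEM:CN} gives $C_{N}\to\infty$ as $N\to\infty$ (polynomially for $\al<\tfrac{3}{2}$, logarithmically for $\al=\tfrac{3}{2}$), this forces the limits to vanish and simultaneously supplies the quantitative bound \eqref{conv1a}.

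The central elementary input is the pointwise inequality
\begin{align*}
\frac{1}{\nbn^{2}} \,=\, \frac{1}{C_{N}+|n|^{2}} \,\leq\, \frac{C_{N}^{-\theta}}{\jb{n}^{2(1-\theta)}}, \qquad \theta \in [0,1],\ n \in \Z^{3},
\end{align*}
which follows from the weighted AM--GM bound $x+y \geq x^{\theta}y^{1-\theta}$.  When bounding $\|\jb{\nb}^{s_{j}}\wZ_{j,N}(t,x)\|_{L^{2}(\O)}^{2}$ via the second-moment computation of \eqref{G6} / \eqref{G8b}, every one of the $j$ weights $\nbn_{i}^{-2}$ that appears will be split in this way with a small exponent $\theta_{i}\geq 0$, and I will choose $\sum_{i}\theta_{i}=\dl$ small enough that the overall factor $C_{N}^{-\dl}$ is pulled out while the remaining sum merely sees $\al$ replaced by $\al-\theta_{i}$ on each factor.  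Because the hypotheses \eqref{reg1}, \eqref{reg_z2}, \eqref{reg2} on the $s_{j}$ are strict inequalities, this $O(\dl)$-loss of effective regularity is absorbed, and Lemma \ref{LEM:SUM}, applied as in the proofs of Lemma \ref{LEM:nel} and Proposition \ref{PROP:Z4}, will control the resulting discrete sums uniformly in $N$.  Coupling this with the Wiener chaos estimate (Lemma \ref{LEM:hyp}) then produces
\begin{align*}
\bigl\|\jb{\nb}^{s_{j}}\wZ_{j,N}(t,x)\bigr\|_{L^{p}(\O)} \,\leq\, C\,C_{N}^{-\dl}\,p^{j/2}, \qquad p\geq 2,
\end{align*}
and, via Minkowski's integral inequality together with a Sobolev $L^{r}$-to-$L^{\infty}$ loss, the quantitative $L^{p}(\O;L^{q}_{T}W^{s_{j},\infty}_{x})$-bound that yields \eqref{conv1a} by Chebyshev.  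For the quintic term $\wZ_{5,N}$ in the range $1<\al\leq \tfrac{13}{10}$ I would import the resonant-product analysis from the proof of Proposition \ref{PROP:Z4} verbatim; the resonance constraint $|m_{1}|\sim|m_{2}|$ is what drives the convergence of the constituent sums $A(m_{1})$, $B(m_{2})$, and the extra $\nbn_{i}^{-2}$ weights only improve these sums under the above splitting.

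The upgrade from the $L^{p}(\O)$-moment estimates to almost-sure convergence in $C([-T,T];W^{s_{j},\infty}(\T^{3}))$ then proceeds as at the end of the proof of Proposition \ref{PROP:ran1}: I will couple the time-increment bound of Lemma \ref{LEM:hhhlll} (whose proof rests only on boundedness and the Lipschitz property of $\cos$ and $\sin$, and therefore survives replacing $\jb{n}$ by $\nbn$ at the cost of a factor $C_{N}^{\delta/2}$ that is swallowed by $C_{N}^{-\dl}$ upon taking $\dl$ slightly smaller) with Kolmogorov's continuity criterion, the Borel--Cantelli lemma, and Egoroff's theorem, producing the set $\O_{T}$ of complemental probability $\les \exp(-c/T^{\kk})$ on which \eqref{conv1a} holds and $\wZ_{j,N}\to 0$ in $C([-T,T];W^{s_{j},\infty}(\T^{3}))$ as $N\to\infty$.

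The main obstacle is the endpoint $\al=\tfrac{3}{2}$: there Lemma \ref{LEM:CN} only yields $C_{N}\sim \log N$, so the decay factor degrades to $C_{N}^{-\dl}\sim (\log N)^{-\dl}$, which is not summable in $N$ and defeats a direct Borel--Cantelli application for the full sequence.  I plan to circumvent this by passing to a rapidly-growing subsequence $\{N_{k}\}_{k\in\N}$, for instance $N_{k}=\lfloor \exp(k^{3/\dl})\rfloor$, so that $C_{N_{k}}^{-\dl}\les k^{-3}$ is summable in $k$.  Borel--Cantelli and Kolmogorov continuity along this subsequence then give the almost-sure convergence of $\{\wZ_{j,N_{k}}\}_{k\in\N}$ to $0$ in $C([-T,T];W^{s_{j},\infty}(\T^{3}))$ asserted in the last sentence of the lemma.
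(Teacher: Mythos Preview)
Your proposal is correct and follows essentially the same approach as the paper: extract a decay factor $C_N^{-\dl}$ from the extra weight $\nbn^{-2}$ via the interpolation $\nbn^{2}\geq C_N^{\theta}\jb{n}^{2(1-\theta)}$ (the paper phrases this as $\nbn\geq\max(C_N^{1/2},\jb{n})$), feed this into the second-moment and Wiener-chaos computations of Lemmas~\ref{LEM:nel}--\ref{LEM:nel_pak} and Propositions~\ref{PROP:z2}--\ref{PROP:Z4}, and then use Chebyshev together with the summability of the resulting tail probabilities over $N$ (polynomial decay for $\al<\tfrac32$, subsequence for $\al=\tfrac32$) to build the single set $\O_T$. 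Your explicit treatment of the time-increment loss $\nbn^{\delta}\les C_N^{\delta/2}\jb{n}^{\delta}$ in the analogue of Lemma~\ref{LEM:hhhlll} is a point the paper leaves implicit, and your observation that it is absorbed by taking the spatial $\dl$ slightly larger is exactly right.
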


\begin{proof}
We only consider the case $j = 1$ since the other cases follow in a similar manner.
Fix $N \in \N$.  With  $\nbn$ as in \eqref{N1}, 
let
\begin{align*}
g^{t, N}_n(\o):=\cos(t\nbn)\, g_{n}(\o)+ \sin(t\nbn)\,h_n(\o).
\end{align*}

\noi
Then, from \eqref{T2}, we have
$$
\jb{\nb}^{s_1}\wZ_{1,N}
=
\sum_{|n|\leq N } \frac{g^{t, N}_n(\o)}{\nbn \jb{n}^{\al-1- s_1}}e^{in\cdot x}.
$$

\noi
Let $q, r< \infty$.  
Then, proceeding as in \eqref{G4a}
with $\nbn \geq \max\big(C_N^\frac{1}{2}, \jb{n}\big)$,  we 
have
\begin{align*}
\begin{split}
\Big\|\big\|\jb{\nb}^{s_1}
 Z_{1,N}\big\|_{L^q_TL^r_x}\Big\|_{L^p(\O)}
&   \leq
\Big\|\big\| \jb{\nb}^{s_1}Z_{1,N}(t, x)
\big\|_{L^p(\O)}
\Big\|_{L^q_TL^r_x}\\
& %
\les T^\frac{1}{q}p^\frac{1}{2} \bigg(\sum_{|n|\leq N } \frac{1}{\nbn^2\jb{n}^{2(\al-1-s_1)}}\bigg)^\frac{1}{2}
\les  C_N^{ - \dl_0}T^{\frac{1}{q}} p^\frac{1}{2}
\end{split}
\end{align*}

\noi
for any $p \geq \max(q, r)$ 
and sufficiently small $\dl_0 > 0$ such that $2(\al - s_1 -2 \dl_0) > 3$.
By Chebyshev's inequality, 
we then have
\begin{align}
P\Big( \big\|\wZ_{j,N}\big\|_{L^q([-T,T];W^{s_j, \infty}(\T^3))}>  C_N^{-\dl} T^\ta\Big)
\leq C \exp\bigg(- c \frac{C_N^{2(\dl_0 - \dl)}}{T^{2(\frac{1}{q} - \ta)}}\bigg).
\label{N2}
\end{align}

\noi
In view of Lemma \ref{LEM:CN} with \eqref{CN}, 
by choosing $\dl, \ta > 0$ sufficiently small, 
the right-hand side of \eqref{N2} is summable
over $N \in \N$, 
as long as  $1 < \al < \frac 32$.
Define
$\O_T$ by 
\begin{align}
 \O_T = \bigcap_{N \in \N}
\Big\{ \o \in \O: \big\|\wZ_{j,N}\big\|_{L^q([-T,T];W^{s_j, \infty}(\T^3))}\leq  C_N^{-\dl} T^\ta\Big\}.
\label{N3}
\end{align}

\noi
Then, we have $P(\O_T^c) \leq
C\exp(-c/T^\kk)$ 
and  \eqref{conv1a} holds
for any $\o \in \O_T$ and any $N \in \N$, 
when $1 < \al < \frac 32$.
When $ \al = \frac 32$, 
we need to choose a subsequence~$\{N_k \}_{k \in \N}$
growing sufficiently fast such that the right-hand side
of \eqref{N2} is summable along this subsequence~$\{N_k \}_{k \in \N}$.
Then, we define $\O_T$ as in \eqref{N3}
but by taking an intersection over $\{N_k\}_{k \in \N}$.
This 
 yields \eqref{conv1a} 
 for any $\o \in \O_T$ and any $N = N_k$, $k \in \N$,  when $\al = \frac 32$.

The rest follows exactly as in the proofs of 
Lemma \ref{LEM:nel_pak}
and Propositions \ref{PROP:ran1}, \ref{PROP:z2}, and~\ref{PROP:Z4}.
Lastly, 
in view of 
 Fatou's lemma
and  the asymptotic behavior $C_N \to \infty$, 
we conclude  from~\eqref{conv1a} that $ \wZ_{1, N}$ tends to $0$
(along the subsequence $\{N_k\}_{k \in \N}$ when $\al = \frac 32$).
\end{proof}

With  Lemma \ref{LEM:stoa} in hand, 
we can proceed as in Section \ref{SEC:LWP}.\footnote{In the following, 
it is understood that when $\al = \frac 32$, 
we work on the subsequence $\{N_k\}_{k \in \N}$
from Lemma~\ref{LEM:stoa} instead of the whole natural numbers $N \in \N$.}
Namely, given  $(w_0, w_1) \in \H^\frac{3}{4}(\T^3)$, 
let $u_N$ be the solution to the (un-renormalized)
NLW~\eqref{KG}
with the  initial data in~\eqref{Y0}:
\[ (u_N, \dt u_N)|_{t= 0} 
= (w_0, w_1) + 
(\wt u_{0, N}^\o, \wt u_{1, N}^\o),\]

\noi
where
$(\wt u_{0, N}^\o, \wt u_{1, N}^\o)$ is 
the truncated random initial data defined in~\eqref{series_N2}.
Now, we write
\begin{align}
\wt u_{N}=\wz_{1,N}+\wz_{2,N}+\wt w_{N},
\label{Y3}
\end{align}

\noi
where $\wz_{1, N}$ and $\wz_{2, N}$ are  as in \eqref{T1} and \eqref{Y1}, respectively.
Recalling that $u_N$ also satisfies~\eqref{KG2}, 
we see that $\wt w_N$
is the solution to 
\begin{equation}
\begin{cases}
\L_N \wt w_N+\wt F_0+\wt F_1(\wt w_N)+\wt F_2(\wt w_N)+\wt F_3(\wt w_N)=0\\
(\wt w_N, \dt  \wt w_N)|_{t=0}=(w_0, w_1),
\end{cases}
\label{Y4}
\end{equation}

\noi
where $\L_N$ is as in \eqref{lin2} and $\wt F_j$, $j = 0, \dots, 3$, are given by 
\begin{align*}
\wt F_0& =3\wZ_{5, N}+ 3\wz_{1,N}(\wz_{2,N})^2+(\wz_{2,N})^3,\\
\wt F_{1}(\wt w_N)& =3\wZ_{2, N}\wt w_N+6\wz_{1,N}  \wz_{2,N} \wt w_N+3 (\wz_{2,N})^2 \wt w_{N},\\
\wt F_2(\wt w_N)& =3\wz_{1,N} (\wt w_N)^2+3 \wz_{2,N}\wt w_N^2,\\
\wt  F_3(\wt w_N) & =\wt  w_N^3.
\end{align*}

Given $N \in \N$, 
define $S_N(t)$ by 
\[ S_N(t) (w_0, w_1) = \cos (t\sqrt{C_N - \Dl}) w_0 + \frac{\sin(t \sqrt{C_N-\Dl})}{\sqrt{C_N-\Dl}}w_1.\]

\noi
Then, the Duhamel formulation of \eqref{Y4} 
is given by 
\begin{align*}
\wt  w_{N}=S_N(t)(w_0, w_1) + 
\L_N^{-1}(\wt F_0 + \wt F_1(\wt w_N)
+ \wt F_2(\wt w_N) + \wt F_3(\wt w_N)), 
\end{align*}

\noi
Define 
$\wt A^{(1)}_N $,  $\wt A^{(2)}_N $, and $\wt R(T)$
by replacing $z_{j, N}$ and $Z_{j, N}$ in \eqref{W2} and \eqref{W3} 
with $\wz_{j, N}$ and $\wZ_{j, N}$.
Then, 
by repeating the analysis in Section \ref{SEC:LWP}
with \eqref{Stri2}, 
we obtain
\begin{equation}
\begin{split}
 \|\wt w_N\|_{L^\infty_TH^\frac{1}{2}_x}
& \leq \| (w_0, w_1)\|_{\H^\frac{1}{2}}
+ C
T^\ta \wt A^{(1)}_N + CT^\ta 
\wt A^{(2)}_N
\|\wt w_N\|_{X_T}\\
& \hphantom{X} + C T^\ta
\bigg(\sum_{j = 1}^2 
\|\wz_{j,N}\|_{L^{2}_T W^{s_j, \infty}_x}\bigg)
\|\wt w_N\|_{X_T}^2
+  C\|\wt w_N\|_{L^4_{T, x}}^3
\end{split}
\label{Y6}
\end{equation}

\noi
and 
\begin{equation}
\begin{split}
 \|\wt w_N\|_{L^4_{T, x}}
& \leq \| S_N(t)(w_0, w_1)\|_{L^4_{T, x}}
+ C
T^\ta \wt A^{(1)}_N + CT^\ta 
\wt A^{(2)}_N
\|\wt w_N\|_{X_T}\\
& \hphantom{X} + C T^\ta
\bigg(\sum_{j = 1}^2 
\|\wt z_{j,N}\|_{L^{2}_T W^{s_j, \infty}_x}\bigg)
\|\wt w_N\|_{X_T}^2
+  C\|\wt w_N\|_{L^4_{T, x}}^3, 
\end{split}
\label{Y7}
\end{equation}

\noi
where the constants are independent of $N \in \N$, thanks
to the uniform Strichartz estimate~\eqref{Stri2}.
By H\"older's inequality in $t$ and Sobolev's inequality in $x$
(as in the proof of Lemma~\ref{LEM:lina}), 
we have
\begin{align}
\| S_N(t)(w_0, w_1)\|_{L^4_{T, x}}
\les T^\frac{1}{4} \| (w_0, w_1) \|_{\H^\frac{3}{4}}, 
\label{Y8}
\end{align}

\noi
uniformly in $N \in \N$.
Then, 
it follows from \eqref{Y6}, \eqref{Y7},  and \eqref{Y8}
that there exists small $T_1 > 0$ depending on $\wt R(T)$
such that 
\begin{align}
\begin{split}
 \|\wt w_N\|_{L^\infty_TH^\frac{1}{2}_x} 
 & \leq 2\| (w_0, w_1)\|_{H^\frac{1}{2}_x},\\
\| \wt w_N \|_{L^4_{T, x}} & \leq \big(1 + C(\wt R(T))\big) T^\ta
\end{split}
\label{Y9}
\end{align}

\noi
for any $0 < T \leq  T_1$, 
uniformly in $N \in \N$.
It follows from   Lemma \ref{LEM:stoa}
that 
for each small $0 < T \leq T_1$, 
there exists 
a set  $\O_T$ of complemental probability smaller than $C\exp(-c/T^\kk)$ 
such that 
\begin{align}
\wt R(T) \leq C_N^{-\dl} T^\ta.
\label{Y9a}
\end{align}

In the following, we fix $\o \in \O_T$
and  show that $\wt w_N$ tends to $0$ as a space-time distribution as $N \to \infty$.
From Lemma \ref{LEM:stoa} with \eqref{Y9} and \eqref{Y9a}, 
we see that 
\begin{align}
\L_N^{-1}(\wt F_0 + \wt F_1(\wt w_N)
+ \wt F_2(\wt w_N))
\too 0
\label{Y10}
\end{align}

\noi
in $X_T$ as $N \to \infty$.
On the other hand, 
by Sobolev's inequality (with $\dl > 0$ sufficiently small) and Lemma \ref{LEM:CN}, 
we have
\begin{align}
\big\|\L_N^{-1} ( \wt F_3(\wt w_N)\big\|_{L^\infty_T L^2_x}
\le C_N^{-\dl} \| \wt w_N^3\|_{L^1_T H^{-1+2\dl}_x}
\les C_N^{-\dl} \| \wt w_N\|_{L^4_{T, x}}^3
\too 0
\label{Y11}
\end{align}

\noi
as $N \to \infty$.
Therefore from Lemmas \ref{LEM:CN} and \ref{LEM:lina}
with \eqref{Y10} and \eqref{Y11}, 
we conclude that $\wt w_N$
tends to $0$ in the space-time distributional sense.

Finally, from the decomposition \eqref{Y3}, 
 Lemma \ref{LEM:stoa}, and the convergence property of $\wt w_N$
 discussed above, 
 we conclude that, for each $\o \in \O_T$, 
  $\wt u_N$ converges to $0$
 as space-time distributions on $[-T, T]\times \T^3$ 
  as $N \to \infty$.
This completes the proof of Theorem \ref{THM:Tri}.

\begin{ackno}\rm 

T.O.~was supported by the European Research Council (grant no.~637995 ``ProbDynDispEq''
and grant no.~864138 ``SingStochDispDyn").
N.T.~was supported by the ANR grant ODA (ANR-18-CE40-0020-01).
O.P.~was supported by 
 the EPSRC New Investigator Award 
 (grant no.~EP/S033157/1).
O.P.~would like to express her sincere gratitude to Professor Ioan Vrabie for his support and for his teaching, especially for 
getting her interested in mathematical analysis. 
The authors would like to thank the anonymous referee for helpful comments.

\end{ackno}


\begin{thebibliography}{99}


\bibitem{russo4}
S.~Albeverio, Z.~Haba, F.~Russo, 
{\it Trivial solutions for a nonlinear two-space-dimensional wave equation perturbed by space-time white noise,} Stochastics Stochastics Rep.  56 (1996), no.~1-2, 127--160. 


\bibitem{russo5}
S.~Albeverio, Z.~Haba, F.~Russo, 
{\it A two-space dimensional semilinear heat equation perturbed by (Gaussian) white noise,}
Probab. Theory Related Fields 121 (2001), no. 3, 319--366.

\bibitem{AK} S.~Albeverio, S.~Kusuoka {\it The invariant measure and the flow associated to the $\Phi^4_3$-quantum field model},
to appear in Ann. Sc. Norm. Super. Pisa Cl. Sci. 


\bibitem{BCD}
H.~Bahouri, J.-Y.~Chemin, 
R.~Danchin, 
{\it Fourier analysis and nonlinear partial differential equations,}
Grundlehren der Mathematischen Wissenschaften [Fundamental Principles of Mathematical Sciences], 
343. Springer, Heidelberg, 2011. xvi+523 pp. 


%
\bibitem{Bass}
R.~Bass, 
{\it Stochastic processes.}
Cambridge Series in Statistical and Probabilistic Mathematics, 33. Cambridge University Press, Cambridge, 2011. xvi+390 pp. 




\bibitem{Bring2}
B.~Bringmann, 
{\it Invariant Gibbs measures for the three-dimensional wave equation with a Hartree nonlinearity II: Dynamics}, 
arXiv:2009.04616 [math.AP].

\bibitem{BOP1} A.~B\'enyi, T.~Oh, O.~Pocovnicu,
{\it Wiener randomization on unbounded domains and an application to almost sure well-posedness of NLS}, Excursions in harmonic analysis. Vol. 4, 3--25, Appl. Numer. Harmon. Anal., Birkh\"auser/Springer, Cham, 2015. 

\bibitem{BOP} A.~B\'enyi, T.~Oh, O.~Pocovnicu,
 {\it Higher order expansions for the probabilistic local Cauchy theory of the cubic nonlinear Schr\"odinger equation on $\R^3$}, 
Trans. Amer. Math. Soc. Ser. B 6 (2019), 114--160.  


\bibitem{Bony} J.-M.~Bony,  {\it Calcul symbolique et propagation des singularit\'es pour les \'equations aux d\'eriv\'ees partielles non lin\'eaires,} Ann. Sci. \'Ecole Norm. Sup.  14 (1981), 209--246.


\bibitem{BO96} J.~Bourgain,  {\it Invariant measures for the 2D-defocusing nonlinear Schr\"odinger equation},  Comm. Math. Phys. 176 (1996), 421--445. 



\bibitem{BTTz}
N.~Burq, L.~Thomann, N.~Tzvetkov, 
{\it Global infinite energy solutions for the cubic wave equation,} Bull. Soc. Math. France 143 (2015), no. 2, 301--313.


\bibitem{BTT1}
N.~Burq, L.~Thomann, N.~Tzvetkov, 
{\it Remarks on the Gibbs measures for nonlinear dispersive equations}, 
 Ann. Fac. Sci. Toulouse Math.
 27  (2018), no. 3, 527--597.


\bibitem{BT1} N. Burq,  N. Tzvetkov, {\it Random data Cauchy theory for supercritical wave equations I. Local theory}, Invent. Math. 173 (2008), 449--475.
 
\bibitem{BT-JEMS}  N. Burq, N. Tzvetkov, {\it  Probabilistic well-posedness for the cubic wave equation}, J. Eur. Math. Soc. 16 (2014), 1--30.


\bibitem{CC}
R.~Catellier, K.~Chouk, 
{\it Paracontrolled distributions and the 3-dimensional stochastic quantization equation},
  Ann. Probab. 46 (2018), no. 5, 2621--2679.



\bibitem{Ch1}
A.~Chapouto, 
{\it A remark on the well-posedness of the modified KdV equation in the Fourier-Lebesgue spaces}, 
arXiv:1911.00551 [math.AP].

\bibitem{Ch2}
A.~Chapouto, 
{\it A refined well-posedness result for the modified KdV equation in the Fourier-Lebesgue spaces},
arXiv:2006.15671 [math.AP].


\bibitem{CCT}
M.~Christ, J.~Colliander, T.~Tao, 
{\it Ill-posedness for nonlinear Schr\"odinger and wave equations}, 
arXiv:math/0311048 [math.AP].



\bibitem{CKSTT} J. Colliander, M. Keel, G. Staffilani, H. Takaoka, T. Tao,   {\it Almost conservation laws and global rough solutions to a nonlinear Schr\"odinger equation}, Math. Res. Lett. 9 (2002), 659--682.



\bibitem{DPD2}
G.~Da Prato, A.~Debussche, 
{\it Strong solutions to the stochastic quantization equations,} Ann. Probab. 31 (2003), no. 4, 1900--1916.




\bibitem{DNY2}
Y.~Deng, A.~Nahmod, H.~Yue,
{\it Random tensors, propagation of randomness, and nonlinear dispersive equations}, 
arXiv:2006.09285 [math.AP].


\bibitem{EJS}
W.~E, 
A.~Jentzen, H.~Shen, 
{\it Renormalized powers of Ornstein-Uhlenbeck processes and well-posedness of stochastic Ginzburg-Landau equations,}
Nonlinear Anal. 142 (2016), 152--193. 


\bibitem{FOk}
J.~Forlano, M.~Okamoto, 
{\it A remark on norm inflation for nonlinear wave equations}, 
 Dyn. Partial Differ. Equ.
 17 (2020), no. 4, 361--381.

\bibitem{FH}
P.~Friz, M.~Hairer, 
{\it A course on rough paths. With an introduction to regularity structures,} Universitext. Springer, Cham, 2014. xiv+251 pp. 


\bibitem{GV}
J.~Ginibre, G.~Velo, 
{\it Generalized Strichartz inequalities for the wave equation,} J. Funct. Anal. 133 (1995),
50--68.


\bibitem{GIP} M.~Gubinelli, P.~Imkeller, P.~Perkowski, {\it Paracontrolled distributions and singular PDEs},  Forum  Math.  Pi 3 (2015), e6, 75 pp.



\bibitem{GKO} M.~Gubinelli, H.~Koch, T.~Oh
 {\it Renormalization of the two-dimensional stochastic nonlinear wave equation},   Trans. Amer. Math. Soc. 370 (2018), 7335--7359.


\bibitem{GKO2} M.~Gubinelli, H.~Koch, T.~Oh
{\it Paracontrolled approach to the three-dimensional stochastic nonlinear wave equation with quadratic nonlinearity},
arXiv:1811.07808 [math.AP].



\bibitem{GKOT}
M.~Gubinelli, H.~Koch, T.~Oh, L.~Tolomeo,
{\it Global dynamics for  the two-dimensional stochastic nonlinear wave equations,}
arXiv:2005.10570 [math.AP].



\bibitem{GO}
Z.~Guo, T.~Oh,
{\it  Non-existence of solutions for the periodic cubic nonlinear Schr\"dinger equation below $L^2$}, 
Internat. Math. Res. Not. 2018, no.6, 1656--1729. 

\bibitem{H} M.~Hairer, {\it  A theory of regularity structures},  Invent. Math. 198  (2014), 269--504.


\bibitem{HairerM}
M.~Hairer, 
K.~Matetski, 
{\it Discretisations of rough stochastic PDEs,} Ann. Probab. 46 (2018), no. 3, 1651--1709.

\bibitem{HRW}
M.~Hairer, M.~Ryser, H.~Weber, 
{\it Triviality of the 2D stochastic Allen-Cahn equation,} Electron. J. Probab. 17 (2012), no. 39, 14 pp.


\bibitem{HairerS}
M.~Hairer, H.~Shen, 
{\it The dynamical sine-Gordon model,} Comm. Math. Phys. 341 (2016), no. 3, 933--989.



\bibitem{KeelTao}
M.~Keel, T.~Tao, {\it Endpoint Strichartz estimates},  Amer. J. Math. 120 (1998), no. 5, 955--980.





\bibitem{LS} H.~Lindblad, C.~Sogge,  {\it On existence and scattering with minimal regularity for semilinear wave equations}, J. Funct. Anal. 130 (1995), 357--426.


\bibitem{Lyons}
T.~Lyons, 
{\it Differential equations driven by rough signals,}
 Rev. Mat. Iberoamericana 14 (1998), no. 2, 215--310.

\bibitem{McKean}
H.P.~McKean, 
{\it Statistical mechanics of nonlinear wave equations. IV. Cubic Schr\"odinger,} 
 Comm. Math. Phys. 168 (1995), no. 3, 479--491. 
 {\it Erratum: Statistical mechanics of nonlinear wave equations. IV. Cubic Schr\"odinger}, Comm. Math. Phys. 173 (1995), no. 3, 675.







\bibitem{MW2}
J.-C.~Mourrat, H.~Weber,
{\it Global well-posedness of the dynamic $\Phi^4$ model in the plane,}
 Ann. Probab. 45 (2017), no. 4, 2398--2476.



\bibitem{MW} J.C. Mourrat, H. Weber, {\it The dynamic $\Phi^4_3$ model comes down from infinity}, Comm. Math. Phys. 356 (2017) 673--753.



\bibitem{MWX}
J.-C.~Mourrat, H.~Weber, W.~Xu,
{\it Construction of $\Phi^4_3$ diagrams for pedestrians,}
 From particle systems to partial differential equations, 1--46, Springer Proc. Math. Stat., 209, Springer, Cham, 2017.


\bibitem{Nelson2}
E.~Nelson, 
{\it A quartic interaction in two dimensions}, 
 1966 Mathematical Theory of Elementary Particles (Proc. Conf., Dedham, Mass., 1965) pp. 69--73 M.I.T. Press, Cambridge, Mass.


\bibitem{OOcomp}
T.~Oh, M.~Okamoto,
{\it  Comparing the stochastic nonlinear wave and heat equations: a case study},
arXiv:1908.03490 [math.AP].

\bibitem{OOR}
T.~Oh, M.~Okamoto, T.~Robert,
{\it  A remark on triviality for the two-dimensional stochastic nonlinear wave equation}, Stochastic Process. Appl. 130 (2020), no. 9, 5838--5864. 


\bibitem{OOTz}
T.~Oh, M.~Okamoto, N.~Tzvetkov,
{\it  Uniqueness and non-uniqueness of the Gaussian free field evolution under the two-dimensional Wick ordered cubic wave equation,} preprint.




\bibitem{ORSW}
T.~Oh, T.~Robert, P.~Sosoe, Y.~Wang,
{\it  On the two-dimensional hyperbolic stochastic sine-Gordon equation}, Stoch. Partial Differ. Equ. Anal. Comput. (2020), 32 pages. https://doi.org/10.1007/s40072-020-00165-8 





\bibitem{OTh2} T.~Oh, L.~Thomann, 
{\it  Invariant Gibbs measures for the 2-$d$ defocusing nonlinear wave equations}, 
 Ann. Fac. Sci. Toulouse Math.
 29 (2020), no. 1, 1--26. 

\bibitem{OTW} T.~Oh,  N. Tzvetkov, Y. Wang,   {\it Solving the 4NLS with white noise initial data}, 
Forum Math. Sigma.
 8 (2020), e48, 63 pp. 



\bibitem{OW}
T.~Oh, Y.~Wang,
{\it  Global well-posedness of the periodic cubic fourth order NLS in negative Sobolev spaces},
 Forum Math. Sigma 6 (2018), e5, 80 pp. 


\bibitem{Simon} B.~Simon,  {\it  The $P(\varphi)_2$ Euclidean (quantum) field theory,} Princeton Series in Physics. Princeton University Press, Princeton, N.J., 1974. xx+392 pp.

\bibitem{TTz} L.~Thomann, N.~Tzvetkov,  {\it Gibbs measure for the periodic derivative nonlinear Schr\"odinger equation}, Nonlinearity 23 (2010), no. 11, 2771--2791.


\bibitem{TzBO}
N.~Tzvetkov, 
{\it Construction of a Gibbs measure associated to the periodic Benjamin-Ono equation,}
 Probab. Theory Related Fields 146 (2010), no. 3-4, 481--514. 

\bibitem{tz-cime} 
N.~Tzvetkov, {\it Random data wave equations},  
 Singular random dynamics, 221--313, Lecture Notes in Math., 2253, Fond. CIME/CIME Found. Subser., Springer, Cham, 2019.




\bibitem{Xia}
B.~Xia, 
{\it Generic ill-posedness for wave equation of power type on 3D torus}, 
 	arXiv:1507.07179 [math.AP].


\end{thebibliography}
\end{document}